\documentclass[11pt]{amsart}
\usepackage{amssymb,amsmath,amscd,latexsym,epsfig,color,hyperref,graphics}
\usepackage[all,cmtip]{xy}

\newtheorem{theorem}{Theorem}[section]
\newtheorem{proposition}[theorem]{Proposition}
\newtheorem{corollary}[theorem]{Corollary}

\newtheorem{lemma}[theorem]{Lemma}

\newtheorem{problem}[theorem]{Problem}

\theoremstyle{definition}
\newtheorem{definition}[theorem]{Definition}
\newtheorem{remark}[theorem]{Remark}
\newtheorem{example}[theorem]{Example}

\def\NN{\ensuremath{\mathbb{N}}}
\def\ZZ{\ensuremath{\mathbb{Z}}}
 
\def\RR{\ensuremath{\mathbb{R}}}
\newcommand{\CC}{{\mathbb C}}

\newcommand{\V}{{\mathcal V}}
\newcommand{\B}{{\mathcal B}}

\def\I{\ensuremath{\mathcal{I}}}

\def\aa{\ensuremath{{\bf{a}}}}
\def\bb{\ensuremath{{\bf{b}}}}
\def\cc{\ensuremath{{\bf{c}}}}
\def\dd{\ensuremath{{\bf{d}}}}
\def\ee{\ensuremath{{\bf{e}}}}
\def\ff{\ensuremath{{\bf{f}}}}

\def\pp{\ensuremath{{\bf{p}}}}

\def\ss{\ensuremath{{\bf{s}}}}

\def\vv{\ensuremath{{\bf{v}}}}

\def\xx{\ensuremath{{\bf{x}}}}
\def\yy{\ensuremath{{\bf{y}}}}

\def\max{\ensuremath{\textup{max}}}

\def\conv{\ensuremath{\textup{conv}}}
\def\cl{\ensuremath{\textup{cl}}}
\def\deg{\ensuremath{\textup{deg}}}

\title{Theta Bodies for Polynomial Ideals} \author{Jo{\~a}o Gouveia}
\address{Department of Mathematics, University of Washington, Box
  354350, Seattle, WA 98195, USA, and CMUC, Department of Mathematics,
  University of Coimbra, 3001-454 Coimbra, Portugal}
\email{jgouveia@math.washington.edu} \author{Pablo A. Parrilo}
\address{Department of Electrical Engineering and Computer Science,
  Laboratory for Information and Decision Systems, Massachusetts
  Institute of Technology, 77 Massachusetts Avenue, Cambridge, MA
  02139-4307, USA} \email{parrilo@mit.edu} \author{Rekha R. Thomas}
\address{Department of Mathematics, University of Washington, Box
  354350, Seattle, WA 98195, USA} \email{thomas@math.washington.edu}
\thanks{All authors were partially supported by the NSF Focused
  Research Group grant (DMS-0757371, DMS-0757207). Gouveia was also
  supported by Funda{\c c}{\~ a}o para a Ci{\^ e}ncia e Tecnologia,
  and Thomas by the Robert R. and Elaine K. Phelps Endowed
  Professorship.}  \date{\today}

\begin{document}

\begin{abstract}
  Inspired by a question of Lov{\'a}sz, we introduce a hierarchy of
  nested semidefinite relaxations of the convex hull of real solutions
  to an arbitrary polynomial ideal, called theta bodies of the ideal.
  These relaxations generalize Lov{\'a}sz's construction of the theta
  body of a graph. 
  We establish a relationship between theta bodies and Lasserre's
  relaxations for real varieties which allows, in many cases, for
  theta bodies to be expressed as feasible regions of semidefinite
  programs.  Examples from combinatorial optimization are given.
  Lov{\'a}sz asked to characterize ideals for which the first theta
  body equals the closure of the convex hull of its real variety. We
  answer this question for vanishing ideals of finite point sets via
  several equivalent characterizations. We also give a geometric
  description of the first theta body for all ideals.
\end{abstract}
\maketitle

\section{Introduction}

A central concern in optimization is to understand $\conv(S)$, the
convex hull of the set of feasible solutions $S$, to a given problem.
In many instances, the set of feasible solutions to an optimization
problem is the set of real solutions to a polynomial system: $f_1(\xx)
= f_2(\xx) = \cdots = f_m(\xx) = 0$, where $f_1, \ldots, f_m \in
\RR[\xx] := \RR[x_1,\ldots,x_n]$. This set is the {\em real variety},
$\V_\RR(I)$, of the {\em ideal} $I$ in $\RR[\xx]$ generated by $f_1,
\ldots, f_m$, and it is often necessary to compute or represent
$\conv(\V_\RR(I))$ exactly or at least approximately. 

Recall that $\cl(\conv(\V_\RR(I)))$, the closure of
$\conv(\V_\RR(I))$, is cut out by the inequalities $f(\xx) \geq 0$ as
$f$ runs over all linear polynomials that are non-negative on
$\V_{\RR}(I)$.  (Call $f \in \RR[\xx]$ a {\em linear} polynomial if it
is affine linear of the form $f=a_0 + \sum_{i=1}^{n} a_i x_i$.) A
classical certificate for the non-negativity of a polynomial $f$ on
$\V_\RR(I)$ is the existence of a {\em sum of squares} ({\em sos})
polynomial $\sum_{j=1}^{t} h_j^2$ that is congruent to $f$ mod $I$
(i.e., $f-\sum_{j=1}^{t} h_j^2 \in I$), written as $f \equiv
\sum_{j=1}^{t} h_j^2$ mod $I$. If this is the case, we say that $f$
{\em is sos mod} $I$.  Hence a natural relaxation of
$\cl(\conv(\V_\RR(I)))$ is the closed convex set:
\begin{equation} \label{eqn:sos relaxation} \{ \xx \in \RR^n \,:\,
  f(\xx) \geq 0 \,\,\forall\,\,f \textup{ linear and sos mod } I \}.
\end{equation}
Depending on $I$, (\ref{eqn:sos relaxation}) may be strictly larger
than $\cl(\conv(\V_\RR(I)))$ since there may be polynomials that are
non-negative on $\V_\RR(I)$ but not sos mod $I$. However, in many
interesting cases, (\ref{eqn:sos relaxation}) will equal
$\cl(\conv(\V_\RR(I)))$. By bounding the degree of the $h_j$'s that
appear in the sos representations, and gradually increasing this
bound, we obtain a hierarchy of relaxations to
$\cl(\conv(\V_\RR(I)))$. In \cite{Lovasz}, Lov{\'a}sz asked a question
that leads to the study of this hierarchy. To explain it, we first
introduce some definitions.

\begin{definition} \label{def:perfect ideals} Let $f$ be a polynomial
  in $\RR[\xx]$, $I$ be an ideal in $\RR[\xx]$ with real variety
  $\V_{\RR}(I) := \{\ss \in \RR^n \,:\, f(\ss) = 0 \,\,\forall \,\,f
  \in I \}$, and let $\RR[\xx]_k$ denote the set of polynomials in
  $\RR[\xx]$ of degree at most $k$.
\begin{enumerate}
\item The polynomial $f$ is {\bf $k$-sos}
  mod $I$ if there exists $h_1,\ldots,h_t \in \RR[\xx]_k$ for some $t$
  such that $f \equiv \sum_{j=1}^t h_j^2 \,\,\textup{mod}\,\,I$.
  
\item The ideal $I$ is {\bf $k$-sos} if {\em every} polynomial that is
  non-negative on $\V_\RR(I)$ is $k$-sos mod $I$. If every polynomial
  of degree at most $d$ that is non-negative on $\V_{\RR}(I)$ is $k$-sos mod
  $I$, we say that $I$ is {\bf $(d,k)$-sos}.
\end{enumerate}
\end{definition}

\begin{example} \label{ex:runningex} Consider the principal ideal $I =
  \langle x_1^2x_2 - 1 \rangle \subset \RR[x_1,x_2]$. Then
  $\textup{conv}(\V_{\RR}(I)) = \{ (s_1,s_2) \in \RR^2 \,:\, s_2 > 0
  \}$, and any linear polynomial that is non-negative over
  $\V_{\RR}(I)$ is of the form $\alpha x_2 + \beta$, where $\alpha,
  \beta \geq 0$. Since $\alpha x_2 + \beta \equiv (\sqrt{\alpha}
  x_1x_2)^2 + (\sqrt{\beta})^2$ mod $I$, $I$ is $(1,2)$-sos. Check
  that $x_2$ is not $1$-sos mod $I$ and so, $I$ is not $(1,1)$-sos.
\end{example}

In \cite{Lovasz}, Lov{\'a}sz asked the following question.

\begin{problem} \cite[Problem 8.3]{Lovasz} \label{prob:lovasz} Which
  ideals in $\RR[\xx]$ are $(1,1)$-sos? How about
  $(1,k)$-sos?
\end{problem}

The geometry behind the above algebraic question leads to a natural
hierarchy of relaxations of $\textup{conv}(\V_{\RR}(I))$ which we now
introduce. The name comes from earlier work of Lov{\'a}sz and will be
explained in Section~\ref{sec:examples}.

\begin{definition} \label{def:theta} 
\begin{enumerate}
\item For a positive integer $k$, the $k$-th {\bf theta body} of an
  ideal $I \subseteq \RR[\xx]$ is
$$\textup{TH}_k(I) := \{ \xx \in \RR^n \,:\,
  f(\xx) \geq 0 \,\,\textup{for every linear}
  \,\,f\,\,\textup{that is $k$-sos mod} \,\,I \}.$$
\item An ideal $I \subseteq \RR[\xx]$ is {\bf $\textup{TH}_k$-exact}
  if $\textup{TH}_k(I)$ equals $\cl(\textup{conv}(\V_{\RR}(I)))$.
\item The {\bf theta-rank} of $I$ is the smallest $k$ for which $I$ is 
  $\textup{TH}_k$-exact.
\end{enumerate}
\end{definition}

By definition, $\textup{TH}_1(I) \supseteq \textup{TH}_2(I) \supseteq
\cdots \supseteq \textup{conv}(\V_{\RR}(I))$. As seen in
Example~\ref{ex:runningex}, $\textup{conv}(\V_{\RR}(I))$ may not be
closed while the theta bodies are. Therefore, the theta-body sequence
of $I$ can converge, if at all, only to
$\textup{cl}(\conv(\V_\RR(I)))$. 

A natural question at this point is whether
the algebraic notion of an ideal being $(1,k)$-sos is equivalent to
the geometric notion of being $\textup{TH}_k$-exact.

\begin{lemma} \label{lem:1ksos implies thetakexact} If an ideal $I
  \subseteq \RR[\xx]$ is $(1,k)$-sos then it is $\textup{TH}_k$-exact.
\end{lemma}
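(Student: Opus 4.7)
The plan is to prove the equality $\textup{TH}_k(I) = \cl(\conv(\V_\RR(I)))$ by establishing the two inclusions separately; only one of them uses the $(1,k)$-sos hypothesis, and both reduce to comparing the families of linear inequalities that define the two sets.

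First I would show the inclusion $\cl(\conv(\V_\RR(I))) \subseteq \textup{TH}_k(I)$, which holds for every ideal regardless of the hypothesis. The key observation is that any polynomial $f$ which is $k$-sos mod $I$ is automatically non-negative on $\V_\RR(I)$: if $f \equiv \sum_j h_j^2$ mod $I$, then $f(\ss) = \sum_j h_j(\ss)^2 \geq 0$ for every $\ss \in \V_\RR(I)$, and therefore $f \geq 0$ on $\conv(\V_\RR(I))$, hence on its closure. Thus the defining inequalities of $\textup{TH}_k(I)$ are a subfamily of the linear inequalities valid on $\cl(\conv(\V_\RR(I)))$, giving the inclusion.

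For the reverse inclusion $\textup{TH}_k(I) \subseteq \cl(\conv(\V_\RR(I)))$, I would invoke the fact already recalled in the introduction that $\cl(\conv(\V_\RR(I)))$ is exactly the set of $\xx \in \RR^n$ satisfying $f(\xx) \geq 0$ for every linear polynomial $f$ that is non-negative on $\V_\RR(I)$ (this is the standard supporting-hyperplane description of a closed convex set). By the $(1,k)$-sos hypothesis, every such linear $f$ is $k$-sos mod $I$, so it already appears in the defining family of $\textup{TH}_k(I)$. Consequently any point of $\textup{TH}_k(I)$ automatically satisfies all inequalities cutting out $\cl(\conv(\V_\RR(I)))$, yielding the inclusion.

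There is essentially no obstacle here: the argument is a direct unpacking of Definitions~\ref{def:perfect ideals} and~\ref{def:theta}, together with the elementary fact that sos polynomials are pointwise non-negative and that a closed convex set in $\RR^n$ is the intersection of its supporting affine halfspaces. The only thing to be mildly careful about is the direction of the hypothesis: being $(1,k)$-sos concerns only linear polynomials that are non-negative on $\V_\RR(I)$, which is exactly the class needed to describe $\cl(\conv(\V_\RR(I)))$, so no stronger Positivstellensatz-type statement is required.
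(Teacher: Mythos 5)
Your proof is correct and uses essentially the same argument as the paper: the containment $\cl(\conv(\V_\RR(I))) \subseteq \textup{TH}_k(I)$ is immediate from pointwise nonnegativity of squares, and the reverse containment follows from the $(1,k)$-sos hypothesis combined with the description of a closed convex set by its supporting linear inequalities. The paper phrases the second inclusion via the separation theorem (take a point outside $\cl(\conv(\V_\RR(I)))$ and separate it by a linear functional), but this is the same convexity fact you invoke, just packaged contrapositively.
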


\begin{proof}
  Let $I$ be $(1,k)$-sos and $\ss \in \RR^n$ be not in
  $\textrm{cl}(\conv(\V_{\RR}(I)))$. By the {\em separation theorem}
  \cite[Theorem III.1.3]{Barvinok} there exists a linear polynomial
  $f$, non-negative over $\textrm{cl}(\conv(\V_{\RR}(I)))$, such that
  $f(\ss)<0$. However, since $I$ is $(1,k)$-sos, $f$ is $k$-sos mod
  $I$ and so $\ss \not \in \textrm{TH}_k(I)$. Hence $\textup{TH}_k(I)
  \subseteq \textrm{cl}(\conv(\V_{\RR}(I)))$ and so,
  $\textup{TH}_k(I)$ equals $\textrm{cl}(\conv(\V_{\RR}(I)))$.
\end{proof}

Interestingly, the converse of Lemma~\ref{lem:1ksos implies
  thetakexact} is false in general.

\begin{example} \label{ex:conversefalse} Consider $I = \langle x^2
  \rangle \subset \RR[x]$ with $\V_{\RR}(I) = \{0\} \subset \RR$. All
  linear polynomials that are non-negative on $\V_\RR(I)$ are of the
  form $\pm a^2 x + b^2$ for some $a,b \in \RR$. If $b \neq 0$, then
  $(\pm a^2 x + b^2) \equiv (\frac{a^2}{2b} x \pm b)^2$ mod
  $I$. However, $\pm x$ is not a sum of squares mod $I$, and hence $I$
  is not $(1,k)$-sos for any $k$. On the other hand, $I$ is
  $\textup{TH}_1$-exact since $\textup{conv}(\V_{\RR}(I)) = \{0\}$ is
  cut out by the infinitely many linear inequalities $\pm x + b^2 \geq
  0$ as $b$ varies over $b \neq 0$.
\end{example}

\begin{definition} \label{def:ideal defs} 
  Let $I$ be an ideal in $\RR[\xx]$.   Then $I$ is
\begin{enumerate}
\item {\bf radical} if it equals its {\bf radical ideal}
  $$\sqrt{I} := \{ f \in \RR[\xx] \,:\, f^m \in I, \,\,m \in \NN
  \backslash \{0\} \},$$
\item {\bf real radical} if it equals its {\bf real radical ideal}
  $$\sqrt[\RR]{I} := \{ f \in \RR[\xx] \,:\, f^{2m} + g_1^2 + \cdots +
  g_t^2 \in I, \,\, m \in \NN \backslash \{0\}, \,\, g_1, \ldots,
  g_t \in \RR[\xx] \},$$
\item and {\bf zero-dimensional} if its {\bf complex variety}
$\V_{\CC}(I) := \{\xx \in \CC^n \,:\, f(\xx) = 0 \,\,\forall \,\, f
\in I \}$ is finite. 
\end{enumerate}
\end{definition}

Recall that given a set $S \subseteq \RR^n$, its {\em vanishing ideal}
in $\RR[\xx]$ is the ideal $\I(S) := \{ f \in \RR[\xx] \,:\, f(\ss)=0
\,\,\forall\,\, \ss \in S \}$.  {\em Hilbert's Nullstellensatz} states
that for an ideal $I \subseteq \RR[\xx]$, $\sqrt{I} = \I(\V_{\CC}(I))$
and the {\em Real Nullstellensatz} states that $\sqrt[\RR]{I} =
\I(\V_{\RR}(I))$.  Hence, $I \subseteq \sqrt{I} \subseteq
\sqrt[\RR]{I}$, and if $I$ is real radical then it is also
radical. See for example, \cite[Appendix 2]{MarshallBook}, for these
notions.

We will prove in Section~\ref{sec:theta bodies} that the converse of
Lemma~\ref{lem:1ksos implies thetakexact} holds for real radical
ideals. These ideals occur frequently in applications and for them,
Problem~\ref{prob:lovasz} is asking when $I$ is $\textup{TH}_1$-exact,
or more generally, $\textup{TH}_k$-exact.

\vspace{.2cm}

\noindent{\bf Contents of this paper.}
Recall that a {\em semidefinite program}
(SDP) is an optimization problem in the space of real symmetric
matrices of the form:
\begin{equation} \label{sdp}
\max \,\,\left\{ \cc^t \xx \,:\, A_0 + \sum_{i=0}^m A_i x_i
  \succeq 0 \right \},
\end{equation}
where $\cc \in \RR^m$ and the $A_j$'s are real symmetric matrices. The
notation $A \succeq 0$ implies that $A$ is {\em positive
  semidefinite}. SDPs generalize linear programs and can be solved
efficiently \cite{BoydVandenberghe}.  In Section~\ref{sec:theta
  bodies} we prove that under a certain technical hypothesis
(satisfied by real radical ideals for instance), the theta body
sequence of an ideal $I$ is a modified version of a hierarchy of
relaxations for the convex hull of a basic semialgebraic set, due to
Lasserre \cite{Lasserre1, Lasserre2}.  In this case, each theta body
is the closure of the projection of a {\em spectrahedron} (feasible
region of a SDP), and an explicit representation is possible using the
{\em combinatorial moment matrices} introduced by Laurent
\cite{Laurent}. When $I$ is a real radical ideal, we further prove
that $I$ is $(1,k)$-sos if and only if $I$ is $\textup{TH}_k$-exact
which impacts later sections.

In Section~\ref{sec:examples} we illustrate the theta body sequence
for the maximum stable set and maximum cut problems in a graph which
are two very well-studied problems from combinatorial
optimization. The stable set problem motivated
Problem~\ref{prob:lovasz}. We explain this connection in detail in
Section~\ref{sec:examples}.

In Section~\ref{sec:structure} we solve Problem~\ref{prob:lovasz} for
vanishing ideals of finite point sets in $\RR^n$. This situation
arises often in applications and is the typical set up in
combinatorial optimization. Several corollaries follow: If $S \subset
\RR^n$ is finite and its vanishing ideal $\I(S)$ is $(1,1)$-sos then
$S$ is affinely equivalent to a subset of $\{0,1\}^n$ and its convex
hull can have at most $2^n$ facets. If $S$ is the vertex set of a
down-closed $0/1$-polytope in $\RR^n$, then $\I(S)$ is $(1,1)$-sos if
and only if $\textup{conv}(S)$ is the stable set polytope of a perfect
graph. Families of finite sets in growing dimension with $(1,1)$-sos
vanishing ideals are exhibited.

In Section~\ref{sec:structure arbitrary S}, we give an intrinsic
description of the first theta body, $\textup{TH}_1(I)$, of an
arbitrary polynomial ideal $I$ in terms of the convex quadrics in $I$.
This leads to non-trivial examples of $\textup{TH}_1$-exact ideals
with arbitrarily high-dimensional real varieties and reveals the
algebraic-geometric structure of $\textup{TH}_1(I)$. Analogous
descriptions for higher theta bodies remain open.

\begin{remark} In \cite{Lasserre3}, Lasserre introduced the {\em
    Schm{\"u}dgen Bounded Degree Representation} (S-BDR) and the {\em
    Putinar-Prestel Bounded Degree Representation} (PP-BDR) properties
  of a compact basic semialgebraic set $K = \{\xx \,:\, g_1(\xx) \geq
  0,\ldots,g_m(\xx) \geq 0\}$ (where $g_i \in \RR[\xx]$),
  defined as follows:
\begin{itemize}
\item $K$ has the S-BDR property if there exists a positive integer
  $k$ such that {\em almost all} linear $f$ that are positive over $K$
  has a representation as $f = \sum_{J \subseteq [m]} \sigma_J g_J$
  where $\sigma_J$ are sos, $g_J := \prod_{j \in J} g_j$ and the
  degree of $\sigma_J g_J$ is at most $2k$ for all $J \subseteq [m] :=
  \{1,\ldots,m\}$.
\item $K$ has the PP-BDR property if there exists a positive integer
  $k$ such that {\em almost all} linear $f$ that are positive over $K$
  has a representation as $f = \sum_{j=0}^{m} \sigma_j g_j$ where
  $\sigma_j$ are sos, $g_0 := 1$ and the degree of $\sigma_j g_j$ is
  at most $2k$ for $j=0,\ldots,m$.
\end{itemize}
Call the smallest such $k$ the S-BDR (respectively, PP-BDR) rank of
$K$. Here ``almost all'' means all except a set of Lebesgue measure
zero.  Note that the PP-BDR property implies that S-BDR property.

For an ideal $I=\left< f_1,\ldots,f_m \right> \subset \RR[\xx]$,
$V_\RR(I)$ is the, possibly non-compact, basic semialgebraic set
$\{\xx \in \RR^n \,:\, \pm f_1(\xx) \geq 0, \ldots, \pm f_m(\xx) \geq
0 \}$.  When $\V_\RR(I)$ is compact, its PP-BDR property is closely
related to the $(1,k)$-sos and $\textup{TH}_k$-exact properties of
$I$. However, these notions are not exactly comparable since the
PP-BDR rank of $\V_\RR(I)$ depends on the choice of generators of $I$,
and only the linear polynomials that are positive (as opposed to
non-negative) over $\V_\RR(I)$. Regardless, note that if $\V_{\RR}(I)$
has PP-BDR rank $k$, then $I$ has theta-rank at most $k$.
\end{remark}

{{\bf Acknowledgments}. We thank Monique Laurent and Ting Kei Pong for
  several useful inputs to this paper. We also thank the referees for
  their many constructive comments that helped the organization of the
  paper.}

\section{Theta Bodies} \label{sec:theta bodies}

In Definition~\ref{def:theta} we introduced the $k$-th theta body of a
polynomial ideal $I \subseteq \RR[\xx]$ and observed that these bodies
create a nested sequence of closed convex relaxations of
$\textup{conv}(\V_{\RR}(I))$ with $\textup{TH}_k(I) \supseteq
\textup{TH}_{k+1}(I) \supseteq \textup{conv}(\V_{\RR}(I))$.  Lasserre
\cite{Lasserre1} and Parrilo \cite{Parrilo:phd,Parrilo:spr} have
independently introduced hierarchies of semidefinite relaxations for
polynomial optimization over basic semialgebraic sets in $\RR^n$ using
results from real algebraic geometry and the theory of moments. We
first examine the connection between the theta bodies of an ideal $I$
and Lasserre's relaxations for $\textup{conv}(\V_{\RR}(I))$.


\subsection{Lasserre's hierarchy and theta bodies}

\begin{definition}
  Let $I$ be an ideal in
  $\RR[\xx]$. The {\bf quadratic module} of $I$ is
  $$\mathcal{M}(I):=\left\{s + I \,:\, s
    \textrm{ is sos in} \,\,\RR[\xx] \right \}.$$
  The {\bf $k$-th truncation} of $\mathcal{M}(I)$ is
  $$\mathcal{M}_{k}(I):=\left\{s + I \,:\, s
    \textrm{ is } k\textrm{-sos} \right\}.$$
 \end{definition}
 
 Both $\mathcal{M}(I)$ and $\mathcal{M}_{k}(I)$ are cones in the
 $\RR$-vector space $\RR[\xx]/I$. Let $(\RR[\xx]/I)'$ denote the set
 of linear functionals on $\RR[\xx]/I$ and $\pi_I$ be the projection map
 from $(\RR[\xx]/I)'$ to $\RR^n$ defined as
 $\pi_I(y)=(y(x_1+I),\ldots,y(x_n+I)).$ Also let
 $\mathcal{M}_{k}(I)^{*} \subseteq (\RR[\xx]/I)'$ denote the {\em
   dual cone} to $\mathcal{M}_{k}(I)$, the set of all linear functions
 on $\RR[\xx]/I$ that are non-negative on $\mathcal{M}_{k}(I)$.
 
\begin{definition}
  For $y \in (\RR[\xx]/I)'$, let $H_y$ be the symmetric bilinear
  form
  $$
  \begin{array}{rccc}
    H_y : & \RR[\xx]/I \times \RR[\xx]/I & \longrightarrow & \RR\\
    & (f+I,g+I) & \longmapsto & y(fg+I)
 \end{array}
 $$
 and $H_{y,t}$ be the restriction of $H_y$ to the subspace
 $\RR[\xx]_{t}/I$.
\end{definition}

Recall that a symmetric bilinear form $H:V \times V \rightarrow \RR$,
where $V$ is a $\RR$-vector space, is positive semidefinite
(written as $H \succeq 0$) if $H(v,v) \geq 0$ for all non-zero
elements $v \in V$.  Given a basis $B$ of $V$, the matrix indexed by
the elements of $B$ with $(b_i,b_j)$-entry equal to $H(b_i,b_j)$ is
called the {\em matrix representation} of $H$ in the basis $B$. The
form $H$ is positive semidefinite if and only if its matrix
representation in any basis is positive semidefinite.

\begin{lemma} \label{thm:sdpformulation} Let $I \subseteq \RR[\xx]$ be
  an ideal and $k$ a positive integer. Then
  $$\mathcal{M}_{k}(I)^{*}=\{y \in (\RR[\xx]/I)' :  H_{y,k} \succeq 0\}.$$
\end{lemma}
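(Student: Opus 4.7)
The plan is to prove the claimed equality by establishing both inclusions directly from the definitions. The key observation is that the generators of the cone $\mathcal{M}_k(I)$ are exactly the cosets $h^2 + I$ with $h \in \RR[\xx]_k$, and these are precisely the quantities evaluated by $H_{y,k}$ when certifying positive semidefiniteness. So the statement is essentially a formal transport of the classical duality between sums of squares and positive semidefinite quadratic forms to the quotient ring $\RR[\xx]/I$.

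For the forward inclusion, I would take $y \in \mathcal{M}_k(I)^*$ and an arbitrary $f \in \RR[\xx]_k$. Since $f^2$ is trivially $k$-sos, the coset $f^2 + I$ lies in $\mathcal{M}_k(I)$, so $y(f^2+I) \geq 0$ by duality. But this is exactly $H_{y,k}(f+I,\, f+I)$, and since every element of the subspace $\RR[\xx]_k/I$ has the form $f+I$ with $f \in \RR[\xx]_k$, this gives $H_{y,k} \succeq 0$.

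For the reverse inclusion, suppose $H_{y,k} \succeq 0$ and let $s + I \in \mathcal{M}_k(I)$. By definition of $k$-sos there exist $h_1,\ldots,h_t \in \RR[\xx]_k$ with $s \equiv \sum_{j=1}^t h_j^2$ mod $I$, so linearity of $y$ yields
$$y(s+I) \;=\; \sum_{j=1}^t y(h_j^2 + I) \;=\; \sum_{j=1}^t H_{y,k}(h_j+I,\, h_j+I) \;\geq\; 0,$$
and therefore $y \in \mathcal{M}_k(I)^*$.

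The only point requiring care is well-definedness of $H_y$ (and hence of $H_{y,k}$) on the quotient: if $f \equiv f'$ and $g \equiv g'$ mod $I$, then $fg - f'g' \in I$, so $fg + I = f'g' + I$ and $y(fg+I) = y(f'g'+I)$. Beyond this routine verification there is no serious obstacle; the argument is a one-line unfolding of definitions in each direction.
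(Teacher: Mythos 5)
Your proof is correct and follows essentially the same route as the paper's: both directions reduce immediately to the observation that $y(s+I)\geq 0$ for all $k$-sos $s$ is equivalent, by linearity, to $y(h^2+I)\geq 0$ for all $h\in\RR[\xx]_k$, which is the definition of $H_{y,k}\succeq 0$. The paper states this as a single biconditional; you unfold it into the two inclusions and add the (routine) well-definedness remark, but the content is identical.
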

\begin{proof}
  Note that $y \in \mathcal{M}_{k}(I)^{*}$ if and only if $y(s + I)
  \geq 0$ for all $k$-sos polynomials $s$. By linearity of $y$ this is
  equivalent to $y(h^2 + I) \geq 0$ for all $h \in \RR[\xx]_k$ which
  is the definition of $H_{y,k}$ being positive semidefinite.
\end{proof}

The original Lasserre relaxations in \cite{Lasserre1} approximate
$\conv(S)$ for a basic semialgebraic set $S=\{\xx \in \RR^n \,:\,
g_i(\xx) \geq 0,\, i=1,\ldots,m\}$ by the sets
$$\left \{ (y(x_1),\ldots,y(x_n)) \,:\, y \in \RR[\xx]', \,\, y(1) = 1, \,\,
  y\left(\sum_{i=0}^{m} s_ig_i \right ) \geq 0 \right \}$$ where $s_i$
are sos, $g_0 := 1$ and the degree of $s_ig_i$ is bounded above by
some fixed positive integer. When there are equations among the
$g_i(\xx) \geq 0$, both Lasserre \cite{Lasserre2} (for $0/1$ point
sets) and Laurent \cite{Laurent} (more generally for finite varieties)
propose doing computations mod the ideal generated by the polynomials
defining the equations, to increase efficiency. We adopt this point of
view since in our case, $S = \V_\RR(I)$, is cut out entirely by
equations, and work with the following definition of a Lasserre
relaxation.

\begin{definition} \label{def:lasserre relaxation} Let $I \subseteq
  \RR[\xx]$ be an ideal, $k$ be a positive integer, and
  $\mathcal{Y}_1$ be the hyperplane of all functions $y \in
  (\RR[\xx]/I)'$ such that $y(1+I)=1$.  The {\bf $k$-th modified
    Lasserre relaxation} $Q_{k}(I)$ of $\textup{conv}(\V_{\RR}(I))$ is
  $$Q_{k}(I):=\pi_I(\mathcal{M}_{k}(I)^* \cap \mathcal{Y}_1).$$
\end{definition}

While $\mathcal{M}_{k}(I)^* \cap \mathcal{Y}_1$ is always closed,
$Q_k(I)$ might not be (see Example~\ref{ex:runningex2}). We first note
that $Q_k(I)$ is indeed a relaxation of $\conv(\V_\RR(I))$.

\begin{lemma} \label{lem:QkI contains conv}
For an ideal $I$ and a positive integer $k$,
$\textup{conv}(\V_{\RR}(I)) \subseteq Q_k(I)$. 
\end{lemma}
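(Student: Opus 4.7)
The plan is to exhibit, for each point $\ss \in \V_\RR(I)$, an explicit element of $\mathcal{M}_k(I)^* \cap \mathcal{Y}_1$ that projects down to $\ss$, and then appeal to convexity of $Q_k(I)$.

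First, for each $\ss \in \V_\RR(I)$ I would introduce the evaluation functional $y_\ss : \RR[\xx]/I \to \RR$ defined by $y_\ss(f+I) := f(\ss)$. This is well-defined on the quotient because every $g \in I$ vanishes at $\ss$ by definition of $\V_\RR(I)$, and it is clearly linear. Moreover $y_\ss(1+I)=1$, so $y_\ss \in \mathcal{Y}_1$, and $\pi_I(y_\ss) = (s_1,\ldots,s_n) = \ss$.

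Next, I would verify that $y_\ss$ lies in $\mathcal{M}_k(I)^*$. By Lemma~\ref{thm:sdpformulation} it suffices to show that the bilinear form $H_{y_\ss,k}$ is positive semidefinite on $\RR[\xx]_k/I$. For any $h + I \in \RR[\xx]_k/I$,
$$H_{y_\ss,k}(h+I, h+I) = y_\ss(h^2+I) = h(\ss)^2 \geq 0,$$
which gives the required semidefiniteness. Hence $y_\ss \in \mathcal{M}_k(I)^* \cap \mathcal{Y}_1$ and $\ss = \pi_I(y_\ss) \in Q_k(I)$, so $\V_\RR(I) \subseteq Q_k(I)$.

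Finally, I would note that $Q_k(I)$ is convex: $\mathcal{M}_k(I)^*$ is a dual cone (hence convex), $\mathcal{Y}_1$ is an affine hyperplane, and $\pi_I$ is linear, so their composition yields a convex subset of $\RR^n$. Taking the convex hull of the inclusion $\V_\RR(I) \subseteq Q_k(I)$ then gives $\conv(\V_\RR(I)) \subseteq Q_k(I)$, as desired. There is no real obstacle here — the argument is essentially a bookkeeping check that point evaluations at real zeros of $I$ are valid ``pseudo-moment'' sequences of every order.
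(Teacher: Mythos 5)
Your proof is correct and follows exactly the same approach as the paper's: introduce the evaluation functional $y^{\ss}$ for each $\ss \in \V_\RR(I)$, check it lies in $\mathcal{M}_k(I)^* \cap \mathcal{Y}_1$ and projects to $\ss$, and then invoke convexity of $Q_k(I)$. Your version simply spells out the verification of $y^{\ss}\in \mathcal{M}_k(I)^*$ via Lemma~\ref{thm:sdpformulation}, which the paper leaves implicit.
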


\begin{proof}
  For $\ss \in \V_{\RR}(I)$, consider $y^{\ss} \in (\RR[\xx]/I)'$
  defined as $y^{\ss}(f+I) := f(\ss)$.  Then $y^{\ss} \in
  \mathcal{M}_{k}(I)^*$ and $y^{\ss}(1+I) = 1$. Therefore,
  $\pi_I(y^{\ss})=\ss \in Q_{k}(I)$, and $\conv(\V_{\RR}(I)) \subseteq
  Q_{k}(I)$ since $Q_{k}(I)$ is convex.
\end{proof}

Since $Q_{k+1}(I) \subseteq Q_{k}(I)$, these bodies create a nested
sequence of relaxations of $\textup{conv}(\V_{\RR}(I)))$ as
intended. Our main goal in this section is to establish a relationship
between $Q_k(I)$ and the $k$-th theta body, $\textup{TH}_k(I)$, of the
ideal $I$ (cf. Definition \ref{def:theta}). We start by noting the
following inclusion.

\begin{proposition} \label{prop:basis free formulation} For an ideal
  $I \subseteq \RR[\xx]$ and a positive integer $k$,
  $\textup{cl}(Q_{k}(I)) \subseteq \textup{TH}_k(I)$.
\end{proposition}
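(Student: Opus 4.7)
The plan is to unwind the definitions and show $Q_k(I) \subseteq \textup{TH}_k(I)$ directly, then take closures using the fact that $\textup{TH}_k(I)$ is closed by construction. The duality between $\mathcal{M}_k(I)$ and $\mathcal{M}_k(I)^*$ is essentially tailor-made for this.

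First, I would take an arbitrary $\xx \in Q_k(I)$. By Definition~\ref{def:lasserre relaxation}, there exists $y \in \mathcal{M}_k(I)^* \cap \mathcal{Y}_1$ with $\pi_I(y) = \xx$, i.e., $y(1+I) = 1$ and $y(x_i+I) = x_i$ for $i = 1, \ldots, n$. Next, I would take any linear polynomial $f = a_0 + \sum_{i=1}^n a_i x_i$ that is $k$-sos mod $I$; by definition this means $f + I \in \mathcal{M}_k(I)$, and hence $y(f+I) \geq 0$ because $y \in \mathcal{M}_k(I)^*$. On the other hand, by linearity of $y$,
\[
y(f+I) = a_0\, y(1+I) + \sum_{i=1}^n a_i\, y(x_i + I) = a_0 + \sum_{i=1}^n a_i x_i = f(\xx),
\]
so $f(\xx) \geq 0$. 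Since $f$ was an arbitrary linear polynomial that is $k$-sos mod $I$, this shows $\xx \in \textup{TH}_k(I)$, and hence $Q_k(I) \subseteq \textup{TH}_k(I)$.

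To finish, I would observe that $\textup{TH}_k(I)$ is an intersection of closed half-spaces (one for each linear $f$ that is $k$-sos mod $I$) and is therefore closed. Taking closures in the inclusion $Q_k(I) \subseteq \textup{TH}_k(I)$ yields $\textup{cl}(Q_k(I)) \subseteq \textup{TH}_k(I)$, as desired.

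There is no real obstacle: the proof is essentially a one-line duality argument once the definitions are matched up, and the key point is only that evaluation at $\xx = \pi_I(y)$ of a linear polynomial $f$ coincides with $y(f+I)$ whenever $y(1+I) = 1$. The more substantial question — when the reverse inclusion holds, so that each theta body is in fact the closure of a projected spectrahedron — is presumably what the subsequent development of the section addresses.
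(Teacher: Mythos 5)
Your proof is correct and follows essentially the same route as the paper: pick a witness $y \in \mathcal{M}_k(I)^* \cap \mathcal{Y}_1$ for a point of $Q_k(I)$, observe that for any linear $f$ that is $k$-sos mod $I$ one has $f(\pi_I(y)) = y(f+I) \geq 0$, and then pass to closures using that $\textup{TH}_k(I)$ is closed. The only cosmetic slip is writing $y(x_i+I) = x_i$ where you mean the $i$-th coordinate of $\xx$, but the argument is exactly the paper's.
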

\begin{proof}
  Since $\textup{TH}_k(I)$ is closed, it is enough to show that
  $Q_{k}(I) \subseteq \textup{TH}_k(I)$. Pick $\pp \in Q_{k}(I)$ and
  $y \in \mathcal{M}_{k}(I)^* \cap \mathcal{Y}_1 $ such that
  $\pi_I(y)=\pp$. Let $f = a_0 + \sum_{i=1}^{n} a_i x_i$ and $f+I \in
  \mathcal{M}_{k}(I)$. Then, $\pp \in \textup{TH}_k(I)$ since 
  $$f(\pp) = f(\pi_I(y))= a_0 y(1+I)+ \sum_{i=1}^{n} a_i y(x_i + I) =
  y(f+I) \geq 0.$$
\end{proof}

Theorem~\ref{thm:basis free formulation} will prove that if ${\mathcal
  M}_k(I)$ is closed, we have the equality $\textup{cl}(Q_{k}(I)) =
\textup{TH}_k(I)$.

\begin{lemma} \label{lem:new lemma} Let $I \subseteq \RR[\xx]$ be an
  ideal and $k$ be a positive integer. If $f \in \RR[\xx]_1$ is
  non-negative over $Q_k(I)$, then $f+I \in \textup{cl}({\mathcal
    M}_k(I))$.
\end{lemma}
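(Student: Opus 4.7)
I would prove the contrapositive: assuming $f + I \notin \textup{cl}({\mathcal M}_k(I))$, I will produce a point of $Q_k(I)$ on which $f$ is negative. The main tool is the finite-dimensional separation theorem together with the positive semidefiniteness characterization of $\mathcal{M}_k(I)^*$ given by Lemma~\ref{thm:sdpformulation}.

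First I would observe that $\mathcal{M}_k(I)$, as well as the element $f+I$, sits inside the finite-dimensional subspace $V := \varphi(\RR[\xx]_{2k}) \subseteq \RR[\xx]/I$, where $\varphi$ is the quotient map; indeed every $k$-sos representative has degree at most $2k$, and $\deg f \leq 1 \leq 2k$. Assuming $f+I \notin \textup{cl}({\mathcal M}_k(I))$, the separation theorem applied inside $V$ yields a linear functional on $V$, strictly negative on $f+I$ and non-negative on $\mathcal{M}_k(I)$; extending it arbitrarily (e.g. by zero) to the rest of $\RR[\xx]/I$ produces $y \in (\RR[\xx]/I)'$ with $y \in \mathcal{M}_k(I)^*$ and $y(f+I) < 0$. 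Note also that $1 = 1^2$ belongs to $\mathcal{M}_k(I)$, so $y(1+I) \geq 0$.

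Now I would split into two cases according to the sign of $y(1+I)$. If $y(1+I) > 0$, rescaling gives $\tilde y := y/y(1+I) \in \mathcal{M}_k(I)^* \cap \mathcal{Y}_1$, so that $\pp := \pi_I(\tilde y) \in Q_k(I)$; writing $f = a_0 + \sum_i a_i x_i$ and using the definition of $\pi_I$, one computes
$$f(\pp) = a_0 + \sum_{i=1}^n a_i \tilde y(x_i + I) = \tilde y(f+I) = \frac{y(f+I)}{y(1+I)} < 0,$$
contradicting $f \geq 0$ on $Q_k(I)$. The delicate case is $y(1+I) = 0$: here $y$ is not normalizable and does not directly give a point of $Q_k(I)$. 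This is where Lemma~\ref{thm:sdpformulation} is crucial. Since $y \in \mathcal{M}_k(I)^*$, the form $H_{y,k}$ is positive semidefinite with a zero diagonal entry $H_{y,k}(1+I, 1+I) = y(1+I) = 0$, so the Cauchy--Schwarz inequality for PSD forms gives $H_{y,k}(1+I, g+I)^2 \leq H_{y,k}(1+I,1+I)\,H_{y,k}(g+I,g+I) = 0$ for every $g \in \RR[\xx]_k$; hence $y(g+I) = 0$ for all such $g$. Taking $g = 1$ and $g = x_i$ (permitted because $k \geq 1$), we conclude $y(f+I) = a_0 y(1+I) + \sum_i a_i y(x_i+I) = 0$, contradicting $y(f+I) < 0$. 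The principal obstacle is precisely this second case --- the separating $y$ can be ``at infinity'' with respect to the normalization defining $Q_k(I)$, and it is the PSD structure of $H_{y,k}$ that annihilates $y$ on all degree-$\leq 1$ classes and rescues the argument.
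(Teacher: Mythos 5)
Your proof is correct, and it follows the same broad outline as the paper's (contradiction via separation, produce $y \in \mathcal{M}_k(I)^*$ with $y(f+I)<0$, normalize), but the key step --- ruling out the degenerate case $y(1+I)=0$ --- is handled by a genuinely different device. The paper observes that $(f+r)^2 + I \in \mathcal{M}_k(I)$ for every $r \in \RR$, so
$0 \le y\bigl((f+r)^2 + I\bigr) = y(f^2+I) + 2r\,y(f+I) + r^2\,y(1+I)$
for all $r$; since $y(f+I) \neq 0$, this quadratic in $r$ is bounded below only if $y(1+I) > 0$. One line, no case split. You instead first note $y(1+I) \ge 0$ (as $1 = 1^2 \in \mathcal{M}_k(I)$) and then, when $y(1+I) = 0$, invoke Lemma~\ref{thm:sdpformulation} to view $H_{y,k}$ as a PSD bilinear form and apply Cauchy--Schwarz to the diagonal zero entry $H_{y,k}(1+I,1+I)=0$, forcing $y$ to vanish on every class of degree at most $k$, hence on $f+I$ --- a contradiction. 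Both arguments are sound; yours is a shade longer but makes explicit the moment-matrix / PSD-kernel structure underlying the statement, and your opening remark about carrying out the separation inside the finite-dimensional image of $\RR[\xx]_{2k}$ in $\RR[\xx]/I$ is a welcome precision the paper leaves implicit.
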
 

\begin{proof} Suppose $f \in \RR[\xx]_1$ is non-negative over $Q_k(I)$
  and $f+I \not \in \textup{cl}({\mathcal M}_k(I))$. Then by the
  separation theorem, there exists $y \in {\mathcal M}_k(I)^*$ such
  that $y(f+I) < 0$. Since $(f+r+I)^2 = (f+r)^2+I$ lies in ${\mathcal
    M}_k(I)$ for any real number $r$, $y \in {\mathcal M}_k(I)^*$ and
  $y$ is linear, we get
$$0 \leq y((f+r+I)^2) = y(f^2+I) + 2ry(f+I) + r^2y(1+I)$$
which implies that $y(1+I) > 0$ since $y(f+I) \neq 0$. Scaling $y$
such that $y(1+I) = 1$, we have that $y \in {\mathcal M}_k(I)^* \cap
{\mathcal Y}_1$. This implies that $\pi_I(y) \in Q_k(I)$ and
therefore, by hypothesis, $f(\pi_I(y)) \geq 0$. However, since $f \in
\RR[\xx]_1$ and $y$ is linear, we also get $f(\pi_I(y)) = y(f+I) < 0$
which is a contradiction.
\end{proof}

\begin{theorem} \label{thm:basis free formulation} Let $I \subseteq
  \RR[\xx]$ be an ideal. For a positive integer $k$, if ${\mathcal
    M}_k(I)$ is closed, then $\textup{cl}(Q_{k}(I)) =
  \textup{TH}_k(I)$.
\end{theorem}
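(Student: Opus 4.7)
The plan is to prove the non-trivial inclusion $\textup{TH}_k(I) \subseteq \textup{cl}(Q_k(I))$, since Proposition~\ref{prop:basis free formulation} already gives $\textup{cl}(Q_{k}(I)) \subseteq \textup{TH}_k(I)$ unconditionally. The strategy is a standard contrapositive-via-separation argument that combines Lemma~\ref{lem:new lemma} with the closedness hypothesis on ${\mathcal M}_k(I)$.

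First, I would take a point $\pp \in \RR^n$ with $\pp \notin \textup{cl}(Q_k(I))$. Since $\textup{cl}(Q_k(I))$ is a closed convex set, the separation theorem (\cite[Theorem III.1.3]{Barvinok}, as used in the proof of Lemma~\ref{lem:1ksos implies thetakexact}) produces an affine linear polynomial $f \in \RR[\xx]_1$ with $f(\ss) \geq 0$ for every $\ss \in \textup{cl}(Q_k(I))$ and $f(\pp) < 0$. In particular $f$ is non-negative on $Q_k(I)$.

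Next, I would apply Lemma~\ref{lem:new lemma} to conclude that $f + I \in \textup{cl}({\mathcal M}_k(I))$. Here is where the hypothesis enters: since ${\mathcal M}_k(I)$ is closed by assumption, $\textup{cl}({\mathcal M}_k(I)) = {\mathcal M}_k(I)$, so $f + I \in {\mathcal M}_k(I)$, i.e.\ $f$ is $k$-sos mod $I$. But then, by the definition of the theta body, $f(\xx) \geq 0$ must hold on all of $\textup{TH}_k(I)$, and since $f(\pp) < 0$ we conclude $\pp \notin \textup{TH}_k(I)$. This establishes $\textup{TH}_k(I) \subseteq \textup{cl}(Q_k(I))$ and completes the proof.

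The argument is quite short once Lemma~\ref{lem:new lemma} is in hand; the main conceptual point is simply that the closedness of ${\mathcal M}_k(I)$ is exactly what is needed to upgrade the ``approximate'' $k$-sos certificate coming from the dual separation into a genuine $k$-sos representation usable in the definition of $\textup{TH}_k(I)$. There is no real obstacle, but the statement is important because without this closedness hypothesis one only gets a representation in the closure of ${\mathcal M}_k(I)$, which need not correspond to a linear polynomial that is honestly $k$-sos mod $I$; this is the technical gap that later sections must address via hypotheses (such as real radicality) ensuring closedness of the relevant quadratic modules.
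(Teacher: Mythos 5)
Your proof is correct and follows essentially the same route as the paper's: contrapositive via the separation theorem, Lemma~\ref{lem:new lemma} to obtain $f+I \in \textup{cl}({\mathcal M}_k(I))$, and the closedness hypothesis to upgrade this to $f+I \in {\mathcal M}_k(I)$, hence $f$ is $k$-sos mod $I$ and excludes $\pp$ from $\textup{TH}_k(I)$.
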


\begin{proof}
  By Proposition~\ref{prop:basis free formulation}, we need to prove
  that when ${\mathcal M}_k(I)$ is closed, $\textup{TH}_k(I) \subseteq
  \textup{cl}(Q_{k}(I))$.  Suppose $\pp \not \in
  \textup{cl}(Q_{k}(I))$. By the separation theorem, there exists $f
  \in \RR[\xx]_1$ non-negative on $\textup{cl}(Q_{k}(I))$ with $f(\pp)
  < 0$. By Lemma~\ref{lem:new lemma}, $f+I \in {\mathcal M}_k(I)$
  since ${\mathcal M}_k(I)$ is closed by assumption and hence $f$ is
  $k$-sos mod $I$. Since $f(\pp) < 0$, $\pp \not \in
  \textup{TH}_k(I)$.
\end{proof}

An important class of ideals for which ${\mathcal M}_k(I)$ is closed
is the set of real radical ideals which are the focus of
Sections~\ref{sec:examples} and ~\ref{sec:structure}. We now derive
various corollaries to Theorem~\ref{thm:basis free formulation} that
apply to real radical ideals.

\begin{corollary} \label{cor:equality for real radical ideals} If $I
  \subseteq \RR[\xx]$ is a real radical ideal then
  $\textup{cl}(Q_{k}(I)) = \textup{TH}_k(I)$.
\end{corollary}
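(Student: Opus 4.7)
The strategy is to invoke Theorem~\ref{thm:basis free formulation}: since that theorem delivers $\textup{cl}(Q_k(I)) = \textup{TH}_k(I)$ whenever the truncated quadratic module $\mathcal{M}_k(I)$ is closed, the plan reduces to proving $\mathcal{M}_k(I)$ is closed whenever $I$ is real radical.

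I would first pass to a finite-dimensional picture. Every $k$-sos polynomial has degree at most $2k$, so $\mathcal{M}_k(I)$ sits inside the finite-dimensional quotient $\RR[\xx]_{2k}/(I \cap \RR[\xx]_{2k})$. Writing each $k$-sos in Gram form $\mm^T X \mm$, with $\mm$ the column vector of monomials of degree at most $k$ and $X$ a positive semidefinite matrix of size $N = \binom{n+k}{k}$, exhibits $\mathcal{M}_k(I)$ as the image of the PSD cone $S^N_+$ under a single linear map $L$ between finite-dimensional spaces.

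The main obstacle is that linear images of closed convex cones are not in general closed, even between finite-dimensional spaces; the real radical hypothesis is exactly what saves the argument. By the Real Nullstellensatz, $I = \I(\V_{\RR}(I))$, so $\RR[\xx]/I$ injects into the real-valued functions on $\V_{\RR}(I)$, and therefore $L(X)$ is determined by the scalars $\mm(v)^T X \mm(v)$ for $v \in \V_{\RR}(I)$. Setting $U := \textup{span}\{\mm(v) : v \in \V_{\RR}(I)\}$ and letting $P$ be the orthogonal projection onto $U$, I would observe that $PXP^T$ is PSD, has image in $U$, and satisfies $L(PXP^T) = L(X)$ (since $P\mm(v) = \mm(v)$ for each $v \in \V_{\RR}(I)$). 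This would identify $L(S^N_+)$ with the image of the face $F_U := \{X \succeq 0 : \textup{im}(X) \subseteq U\}$ under $L$. A short check then shows $L$ is injective on $F_U$: if $L(X) = 0$ and $X \succeq 0$ then $X \mm(v) = 0$ for every $v$, forcing $\textup{im}(X) \subseteq U^\perp$, which combined with $\textup{im}(X) \subseteq U$ gives $X = 0$. Since $F_U \cong S^{\dim U}_+$ is a closed cone and a linear injection between finite-dimensional spaces is a homeomorphism onto its image, $\mathcal{M}_k(I) = L(F_U)$ is closed and Theorem~\ref{thm:basis free formulation} applies to yield the corollary.
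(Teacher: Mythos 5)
Your overall route genuinely differs from the paper's: the paper disposes of this corollary by citing Powers--Scheiderer \cite[Prop 2.6]{PowSchei} for the closedness of $\mathcal{M}_k(I)$ when $I$ is real radical, and then applies Theorem~\ref{thm:basis free formulation}. You instead give a self-contained proof of the closedness via the Gram-matrix map $L$ and the face $F_U$ of the PSD cone. That is a valid and more transparent alternative, and your preliminary steps --- restricting to $\RR[\xx]_{2k}$, using the Real Nullstellensatz to pass to evaluations on $\V_\RR(I)$, projecting onto $U = \textup{span}\{\mm(v) : v \in \V_\RR(I)\}$, and identifying $L(S^N_+) = L(F_U)$ --- are all correct.

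There is, however, a gap in the final step. What you actually prove is that $\ker(L) \cap F_U = \{0\}$: a PSD matrix with image in $U$ mapping to $0$ must be zero. This is strictly weaker than ``$L$ is injective on $F_U$.'' Since $F_U$ is a cone and not a subspace, from $L(X) = L(Y)$ with $X,Y \in F_U$ you get $X - Y \in \ker(L)$, but $X - Y$ need not be in $F_U$, so the kernel computation does not force $X = Y$. And $L$ is in fact typically \emph{not} injective on $F_U$: take $n=2$, $V = \{(0,0),(1,0),(0,1)\}$, $k=1$, $\mm = (1,x,y)$, where $U = \RR^3$ and $F_U = S^3_+$; then the identity matrix and the PSD matrix with unit diagonal and $1/2$ in the $(x,y)$ off-diagonal entries both map to the class $1 + x + y$ mod $\I(V)$. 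So the principle ``a linear injection between finite-dimensional spaces is a homeomorphism onto its image'' is not the one that applies here.

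The correct finishing lemma is the standard fact that if $C$ is a closed \emph{pointed} cone in a finite-dimensional space and $L$ is linear with $\ker(L) \cap C = \{0\}$, then $L(C)$ is closed. (Sketch: $K := C \cap S^{N-1}$ is compact, so $L(K)$ is compact and bounded away from the origin since $\ker(L) \cap C = \{0\}$; any convergent sequence in $L(C) = \RR_{\geq 0}\, L(K)$ can then be normalized and a subsequential limit extracted to show the limit lies in $L(C)$.) Since $F_U$ is a closed face of $S^N_+$, hence a closed pointed cone, and you have verified $\ker(L) \cap F_U = \{0\}$, this lemma yields $\mathcal{M}_k(I) = L(F_U)$ closed, after which Theorem~\ref{thm:basis free formulation} completes the proof as you intended. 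So the argument is essentially right, but you must replace the injectivity/homeomorphism justification with the pointed-cone lemma.
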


\begin{proof}
By \cite[Prop 2.6]{PowSchei}, if $I$ is real radical, then ${\mathcal
  M}_k(I)$ is closed.
\end{proof}

\begin{lemma} \label{lem:borweinlewis} Let $V$ and $W$ be finite
  dimensional vector spaces, $H \subseteq W$ be a cone and $A \,:\, V
  \rightarrow W$ be a linear map such that $A(V) \cap \textup{int}(H)
  \neq \emptyset$. Then $(A^{-1}H)^* = A'(H^*)$ where $A'$ is the dual
  operator to $A$.  In particular, $A'(H^*)$ is closed in $V'$.
\end{lemma}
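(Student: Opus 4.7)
The plan is to establish the equality $(A^{-1}H)^* = A'(H^*)$ in three steps; once this equality holds, the ``in particular'' conclusion is free, since every dual cone in a finite-dimensional space is automatically closed.

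Step one is the trivial inclusion $A'(H^*) \subseteq (A^{-1}H)^*$: for $y \in H^*$ and $v \in A^{-1}H$, one has $(A'y)(v) = y(A(v)) \geq 0$ because $A(v) \in H$.

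Step two is the inclusion up to closure, $(A^{-1}H)^* \subseteq \overline{A'(H^*)}$, which holds without any regularity hypothesis. I would prove it by contradiction using the separation theorem: if some $\phi \in (A^{-1}H)^*$ lay outside the closed convex cone $\overline{A'(H^*)}$, separating in the finite-dimensional space $V'$ produces $v \in V$ with $\phi(v) < \inf\{y^*(A(v)) : y^* \in H^*\}$. A short computation shows that this infimum is $0$ when $A(v) \in \overline{H}$ and $-\infty$ otherwise, so we must have $A(v) \in \overline{H}$ and $\phi(v) < 0$; an approximation argument then contradicts the hypothesis $\phi \in (A^{-1}H)^*$.

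The main obstacle is step three, showing that $A'(H^*)$ is already closed under the Slater-type hypothesis $U \cap \textup{int}(H) \neq \emptyset$, where $U := A(V)$. I would take any sequence $\phi_n = A'(y_n)$ with $y_n \in H^*$ and $\phi_n \to \phi$, and argue by a recession argument that $\|y_n\|$ must remain bounded: otherwise, after normalizing and passing to a subsequence, $z_n := y_n / \|y_n\| \to z$ with $\|z\| = 1$, $z \in H^*$ (as $H^*$ is closed), and $A'(z) = 0$, so $z \in U^{\perp}$. Picking $u_0 \in U \cap \textup{int}(H)$ supplied by the Slater hypothesis, we would get $z(u_0) = 0$ since $u_0 \in U$; but a non-zero element $z$ of $H^*$ must be strictly positive on every interior point of $H$, because otherwise $u_0 - tw \in H$ for small $t > 0$ and some $w$ with $z(w) > 0$ would violate $z \in H^*$. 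This contradiction forces $\{y_n\}$ to be bounded, and any convergent subsequence yields a limit $y \in H^*$ with $A'y = \phi$, so $\phi \in A'(H^*)$. Combining steps two and three gives $(A^{-1}H)^* \subseteq \overline{A'(H^*)} = A'(H^*)$, completing the proof.
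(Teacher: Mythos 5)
The paper does not actually prove this lemma; it simply cites Corollary~3.3.13 of Borwein--Lewis (with $K=V$). You instead give a self-contained elementary proof, which is a genuinely different route and a useful one for readers who do not have the reference at hand. Your overall strategy is the standard one and the architecture is sound: the easy inclusion, the bipolar/separation inclusion up to closure, and a compactness/recession argument to show $A'(H^*)$ is already closed under the Slater condition. Step one and step three are correct as written; in particular the argument in step three that a nonzero $z \in H^*$ must be strictly positive on interior points of $H$, and that $z \in A(V)^\perp$ then contradicts the Slater point, is exactly right.

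Two small points should be tightened. First, you implicitly treat $H$ as a \emph{convex} cone in the bipolar computation $\{w : y^*(w) \geq 0 \ \forall y^* \in H^*\} = \overline{H}$; for a non-convex cone this set is $\overline{\conv(H)}$, and the lemma as stated only says ``cone.'' In the paper's application $H = {\mathcal M}_k(I)$ is a convex cone, so this is harmless, but it should be noted. Second, and more substantively, your claim that step two ``holds without any regularity hypothesis'' is an overclaim: without assuming $H$ closed or a Slater point, the final ``approximation argument'' has no way to pass from $A(v) \in \overline{H}$ with $\phi(v) < 0$ to a contradiction with $\phi \in (A^{-1}H)^*$. (For instance with $W=\RR^2$, $H = \{(x,y): x>0\}\cup\{0\}$, $V=\RR$, $A(t)=(0,t)$, one has $(A^{-1}H)^* = V'$ but $\overline{A'(H^*)} = \{0\}$.) The fix is simply to use the Slater point $u_0$ already in step two: for $t\in(0,1]$, $(1-t)A(v) + tA(u_0) \in \textup{int}(H)$, so $(1-t)v + tu_0 \in A^{-1}H$ and letting $t\to 0$ gives $\phi(v)\geq 0$. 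With that adjustment, and restricting to convex $H$ as the applications require, your proof is correct and nicely replaces the bare citation with a transparent argument.
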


\begin{proof}
This follows from Corollary~3.3.13 in \cite{BorweinLewis} by
setting $K = V$.
\end{proof}

\begin{corollary}\label{cor:theta-lasserre}
  Let $I$ be a real radical ideal in $\RR[\xx]$ and $k$ be a positive
  integer.  If there exists $g \in \RR[\xx]_1$ such that $g+I$ is in
  the interior of $\mathcal{M}_{k}(I)$ (considered as a subset of
  $\RR[\xx]_{2k}/I$), then $\textup{TH}_{k}(I)=Q_{k}(I)$.
\end{corollary}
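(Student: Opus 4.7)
The plan is to deduce this corollary from Corollary~\ref{cor:equality for real radical ideals} and Lemma~\ref{lem:borweinlewis}. Since $I$ is real radical, Corollary~\ref{cor:equality for real radical ideals} already gives $\textup{cl}(Q_{k}(I)) = \textup{TH}_{k}(I)$, so it is enough to show that $Q_{k}(I)$ is itself closed; once this is done, the chain $Q_{k}(I) \subseteq \textup{cl}(Q_{k}(I)) = \textup{TH}_{k}(I)$ combined with Proposition~\ref{prop:basis free formulation} collapses to the desired equality.

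To prove closedness of $Q_{k}(I)$, I would apply Lemma~\ref{lem:borweinlewis} in the following setup. Let $V = \RR^{n+1}$, with coordinates $(a_{0},\ldots,a_{n})$ regarded as the coefficient vector of a linear polynomial $a_{0} + \sum_{i=1}^{n} a_{i} x_{i}$; let $W = \RR[\xx]_{2k}/I$, which is finite-dimensional since it is the image of the finite-dimensional space $\RR[\xx]_{2k}$ under the quotient map; and let $A \colon V \to W$ be the linear map $\aa \mapsto (a_{0} + \sum_{i=1}^{n} a_{i} x_{i}) + I$. Take $H = \mathcal{M}_{k}(I) \subseteq W$, which is a closed convex cone by the same Powers--Scheiderer result that was invoked in the proof of Corollary~\ref{cor:equality for real radical ideals}. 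The existence of $g \in \RR[\xx]_{1}$ with $g + I$ in the interior of $\mathcal{M}_{k}(I)$ is precisely the condition $A(V) \cap \textup{int}(H) \neq \emptyset$ required by Lemma~\ref{lem:borweinlewis}. The lemma then yields that $A'(\mathcal{M}_{k}(I)^{\ast})$ is closed in $V' \cong \RR^{n+1}$.

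It remains to identify this closed set with $\{1\} \times Q_{k}(I)$ after slicing. A direct computation from the definition of the dual map shows that, for $y \in \mathcal{M}_{k}(I)^{\ast}$, the functional $A'(y)$ corresponds, under $V' \cong \RR^{n+1}$, to the vector $(y(1+I),\, y(x_{1}+I),\ldots, y(x_{n}+I)) = (y(1+I),\, \pi_{I}(y))$. Intersecting the closed set $A'(\mathcal{M}_{k}(I)^{\ast})$ with the closed affine hyperplane $\{b_{0} = 1\}$ produces exactly $\{1\} \times Q_{k}(I)$, by Definition~\ref{def:lasserre relaxation}. Since a closed set intersected with a closed hyperplane is closed, $Q_{k}(I)$ is closed, and we conclude $\textup{TH}_{k}(I) = Q_{k}(I)$.

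The only real subtleties I anticipate are bookkeeping ones: first, one must recognize $\RR[\xx]_{2k}/I$ as a genuinely finite-dimensional ambient space so that Lemma~\ref{lem:borweinlewis} literally applies, and second, one must work out the explicit description of the dual map $A'$ in order to match the $b_{0}=1$ slice of $A'(\mathcal{M}_{k}(I)^{\ast})$ with $\{1\} \times Q_{k}(I)$. Both steps are short once written down, and no new separation-theorem argument beyond what is already used in Theorem~\ref{thm:basis free formulation} is required.
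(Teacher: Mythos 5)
Your proposal is correct and follows essentially the same route as the paper: reduce to showing $Q_{k}(I)$ is closed via Corollary~\ref{cor:equality for real radical ideals}, then invoke Lemma~\ref{lem:borweinlewis} with $H=\mathcal{M}_{k}(I)\subseteq\RR[\xx]_{2k}/I$ and a linear map into $\RR[\xx]_{2k}/I$ whose image is the linear polynomials mod $I$, using the interior hypothesis to get closedness of $A'(H^{*})$ and then slicing at $b_{0}=1$. The only cosmetic difference is that you take the domain to be the coefficient space $\RR^{n+1}$ rather than $\RR[\xx]_{1}/I$ as the paper does; the one "bookkeeping" point you flag but do not carry out, matching $A'(H^{*})$ with $\tilde{\pi}_{I}(\mathcal{M}_{k}(I)^{*})$, is precisely the restriction/extension argument between functionals on $\RR[\xx]_{2k}/I$ and on $\RR[\xx]/I$ that the paper spells out, and it does go through as you anticipate.
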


\begin{proof}  
By Corollary~\ref{cor:equality for real radical ideals}, it suffices
to show that $Q_k(I)$ is closed. Consider $\RR^{n+1}$ with coordinates
indexed $0,1,\ldots,n$ and the map
$$\tilde{\pi}_I \,:\, (\RR[\xx]/I)' \rightarrow \RR^{n+1}
\,\,\textup{such that} \,\,y \mapsto (y(1+I),\pi_I(y)).$$ Then
$\tilde{\pi}_I({\mathcal M}_k(I)^* \cap {\mathcal Y}_1) = \{(1,\pp)
\,:\, \pp \in Q_k(I) \}$ and so, $Q_k(I)$ will be closed if
$\tilde{\pi}_I({\mathcal M}_k(I)^* \cap {\mathcal Y}_1) =
\tilde{\pi}_I({\mathcal M}_k(I)^*) \cap \{\pp \in \RR^{n+1} \,:\, p_0
= 1 \}$ is closed.  Hence, it suffices to show that
$\tilde{\pi}_I({\mathcal M}_k(I)^*) \subseteq \RR^{n+1}$ is closed.

Now consider the inclusion map $A \,:\, \RR[\xx]_1/I \rightarrow
\RR[\xx]_{2k}/I$ and let $M$ denote the cone ${\mathcal M}_k(I)$
considered as a subset of $\RR[\xx]_{2k}/I$.  By assumption,
$A(\RR[\xx]_1/I) \cap \textup{int}(M) = (\RR[\xx]_1/I) \cap
\textup{int}(M) \neq \emptyset$ and so by
Lemma~\ref{lem:borweinlewis}, $A'(M^*)$ is closed.  Let $C :=
\{1^*,x_1^*,\ldots,x_n^*\}$ be the canonical basis of
$(\RR[\xx]_1/I)'$. Then $A'(\bar{y}) = ({\bar y}(1+I),{\bar
  y}(x_1+I), \ldots, {\bar y}(x_n+I))$ with respect to $C$. Now note
that if $y \in {\mathcal M}_k(I)^*$ then its restriction ${\hat y}$ to
$\RR[\xx]_{2k}/I$ belongs to $M^*$ and $A'({\hat y}) =
\tilde{\pi}_I(y)$. Therefore, $\tilde{\pi}_I({\mathcal M}_k(I)^*)
\subseteq A'(M^*)$. Conversely, if ${\bar y} \in M^*$ and ${\tilde
  y}$ is any extension to $\RR[\xx]/I$, then ${\tilde y}$ belongs to
${\mathcal M}_k(I)^*$. Since $A'({\bar y}) =
\tilde{\pi}_I(\tilde{y})$ we get $A'(M^*) \subseteq
\tilde{\pi}_I({\mathcal M}_k(I)^*)$ and so, $A'(M^*) =
\tilde{\pi}_I({\mathcal M}_k(I)^*)$ is closed.
\end{proof}


Let $I$ be an ideal and $k$ a positive integer. In
Lemma~\ref{lem:1ksos implies thetakexact} we saw that if $I$ is
$(1,k)$-sos (i.e., every linear polynomial that is non-negative on
$\V_{\RR}(I)$ is $k$-sos mod $I$), then $I$ is $\textup{TH}_k$-exact
(i.e., $\textup{TH}_k(I) =
\textup{cl}(\textup{conv}(\V_{\RR}(I)))$). Example~\ref{ex:conversefalse}
showed that the reverse implication does not always hold.  

\begin{corollary} \label{cor:real radical equivalence} If an ideal $I
  \subseteq \RR[\xx]$ is real radical then $I$ is $(1,k)$-sos if and
  only if $I$ is $\textup{TH}_k$-exact.
\end{corollary}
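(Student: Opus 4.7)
The forward implication is already contained in Lemma~\ref{lem:1ksos implies thetakexact} and requires no hypothesis on $I$, so the plan is to prove only the converse: assuming $I$ is real radical and $\textup{TH}_k$-exact, show that every linear polynomial non-negative on $\V_\RR(I)$ is $k$-sos mod $I$.

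My plan is to chain together the facts already established in this section. Let $f \in \RR[\xx]_1$ be non-negative on $\V_\RR(I)$. Then $f$ is non-negative on $\textup{conv}(\V_\RR(I))$, and by continuity also on $\textup{cl}(\textup{conv}(\V_\RR(I)))$. By the hypothesis that $I$ is $\textup{TH}_k$-exact, this closed convex set equals $\textup{TH}_k(I)$, and by Corollary~\ref{cor:equality for real radical ideals} (which uses real radicalness), $\textup{TH}_k(I) = \textup{cl}(Q_k(I)) \supseteq Q_k(I)$. Hence $f$ is non-negative on $Q_k(I)$.

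Next I would invoke Lemma~\ref{lem:new lemma}, which says that any linear polynomial non-negative on $Q_k(I)$ lies, modulo $I$, in $\textup{cl}(\mathcal{M}_k(I))$. Thus $f + I \in \textup{cl}(\mathcal{M}_k(I))$. Finally, since $I$ is real radical, $\mathcal{M}_k(I)$ is already closed by \cite[Prop.~2.6]{PowSchei} (the same fact used in Corollary~\ref{cor:equality for real radical ideals}), so $f + I \in \mathcal{M}_k(I)$. By definition of $\mathcal{M}_k(I)$, this means $f$ is $k$-sos mod $I$, as desired.

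The main obstacle, in the sense of technical content, is hidden in the appeal to the Powers--Scheiderer result guaranteeing closedness of $\mathcal{M}_k(I)$ when $I$ is real radical; without this closedness one only obtains an approximate sos-representation of $f$. Everything else is a straightforward assembly: the separation theorem packaged inside Lemma~\ref{lem:new lemma}, the identification of $\textup{TH}_k(I)$ with $\textup{cl}(Q_k(I))$, and the $\textup{TH}_k$-exactness hypothesis. The proof is therefore essentially a two-line deduction from Corollary~\ref{cor:equality for real radical ideals} and Lemma~\ref{lem:new lemma}.
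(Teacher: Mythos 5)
Your proof is correct and follows essentially the same chain as the paper's: reduce to showing $f$ non-negative on $Q_k(I)$, apply Lemma~\ref{lem:new lemma} to get $f+I \in \textup{cl}(\mathcal{M}_k(I))$, then use the Powers--Scheiderer closedness of $\mathcal{M}_k(I)$ for real radical $I$. The only (harmless) overkill is invoking Corollary~\ref{cor:equality for real radical ideals} to pass from $\textup{TH}_k(I)$ to $Q_k(I)$ where the paper simply uses the containment $Q_k(I) \subseteq \textup{TH}_k(I)$ from Proposition~\ref{prop:basis free formulation}, which holds for arbitrary ideals.
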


\begin{proof} 
  If $f \in \RR[\xx]_1$ is non-negative on $\V_\RR(I)$ and $I$ is
  $\textup{TH}_k$-exact, then $f$ is non-negative on
  $\textup{TH}_k(I)$ and hence on $Q_k(I)$. Therefore, by
  Lemma~\ref{lem:new lemma}, $f \in \textup{cl}({\mathcal M}_k(I))$.
  Suppose now that $I$ is also real radical. Then ${\mathcal M}_k(I)$
  is closed and $f+I \in {\mathcal M}_k(I)$, which means that $I$ is
  $(1,k)$-sos.
\end{proof}

We close with a brief discussion of ideals for which the theta body
sequence is guaranteed to converge (finitely or asymptotically) to
$\textup{cl}(\conv(\V_{\RR}(I)))$.

\begin{enumerate}
\item If $\V_{\RR}(I)$ is finite the results in \cite{LaLauRos} imply
  that $I$ is $\textup{TH}_k$-exact for some finite $k$. If
  $\V_\CC(I)$ is finite ($I$ is zero-dimensional), then $k$ can be
  bounded above by the maximum degree of a linear basis of
  $\RR[\xx]/I$ \cite{Laurent} (see Section~\ref{subsec:combinatorial
    matrices}).  However, as in $I = \langle x^2 \rangle$, we cannot
  guarantee that $I$ is $(1,k)$-sos for any $k$, even when $I$ is
  zero-dimensional.  If $I$ is zero-dimensional and radical, then in
  fact, $I$ is $(1,k)$-sos for finite $k$ with $k \leq
  |\V_{\CC}(I)|-1$ (see \cite{parrilo}, \cite[Theorem
  2.4]{LaurentSosSurvey}). Better bounds are often possible as in
  Remark~\ref{rem:k parallel translates}. For an ideal $I \subseteq
  \RR[\xx]$ we summarize the above results in the following table.


\begin{center}
$
\xymatrix{
\V_\CC(I) \,\,\text{finite}  \ar[r] \ar[d]^{\tiny{I = \sqrt{I}}}  & \V_\RR(I) \,\,\text{finite} \ar[d] \\
I \,\,\, \text{(1,k)-sos}  \ar@<-1mm>[r]  &  I\,\,\,\textup{TH}_k\textup{-exact}   \ar@<-1mm>[l]_{\tiny{I=\sqrt[\RR]{I}}}
}
$
\end{center}

\item If $\V_{\RR}(I)$ is not finite but is compact, Schm{\"u}dgen's
  Positivstellensatz \cite[Chapter 3]{MarshallBook} implies that the
  theta body sequence of $I$ converges (at least asymptotically) to
  $\textup{cl}(\textup{conv}(\V_{\RR}(I)))$ (i.e.,
  $\bigcap_{k=1}^{\infty} \textup{TH}_k(I) =
  \textup{cl}(\textup{conv}(\V_{\RR}(I)))$).  
  
\item If $\V_{\RR}(I)$ is not compact, then the study of the theta
  body hierarchy becomes harder.  Scheiderer \cite[Chapter
  2]{MarshallBook} has identified ideals $I$ with $\V_{\RR}(I)$ not
  necessarily compact, but of dimension at most two, for which every
  $f \geq 0$ mod $I$ is sos mod $I$. In all these cases, the theta
  body sequence of $I$ converges to
  $\textup{cl}(\textup{conv}(\V_{\RR}(I)))$.
  \end{enumerate}

  The results of Schm{\"u}dgen and Scheiderer mentioned above fit
  within a general framework in real algebraic geometry that is
  concerned with when an arbitrary $f \in \RR[\xx]$ that is positive
  or non-negative over a basic semi-algebraic set is sos modulo
  certain algebraic objects defined by the set. We only care about
  real varieties and whether linear polynomials that are non-negative
  over them are sos mod their ideals. Therefore, often there are
  ideals $I$ that are $\textup{TH}_k$-exact or $(1,k)$-sos for which
  there are non-linear polynomials $f$ such that $f \geq 0$ mod $I$
  but $f$ is not sos mod $I$. For instance, the proof of
  Theorem~\ref{thm:intersection} will show that $J_n := \langle
  \sum_{i=1}^{n} x_i^2 -1 \rangle$ is $(1,1)$-sos for all $n$, but a
  result of Scheiderer \cite[Theorem 2.6.3]{MarshallBook} implies that
  when $n \geq 4$, there is always some non-linear $f$ non-negative on
  $\V_\RR(J_n)$ that is not sos mod $J_n$.

\subsection{Combinatorial moment matrices} \label{subsec:combinatorial
  matrices}

To compute theta bodies we must work with the truncated quadratic
module $\mathcal{M}_{k}(I)$ which requires computing sums of squares
in $\RR[\xx]/I$ as described in \cite{SOSstructbook}, or dually, using
the combinatorial moment matrices introduced by Laurent in
\cite{Laurent}.  We describe the latter viewpoint here as it is more
natural for theta bodies. 

Consider a basis $\B = \{ f_0+I, f_1+I, \ldots \}$ for $\RR[\xx]/I$,
and define $\textup{deg}(f_i + I) := \textup{min}_{f-f_i \in I}
\textup{deg}\,f$.  For a positive integer $k$, let $\B_k := \{ f_l + I
\in \B \,:\, \textup{deg}(f_l+I) \leq k \}$, and set $\ff_k := (f_l+I
\,:\, f_l+I \in \B_k)$. We may assume that the elements of $\B$ are
indexed in order of increasing degree.  Let $\lambda^{(g+I)} :=
(\lambda_l^{(g+I)})$ be the vector of coordinates of $g+I$ with
respect to $\B$. Note that $\lambda^{(g+I)}$ has only finitely many
non-zero coordinates.

\begin{definition}
  Let $\yy \in \RR^{\mathcal{B}}$. Then the {\bf combinatorial moment
    matrix} $M_{\mathcal{B}}(\yy)$ is the (possibly infinite) matrix
  indexed by $\mathcal{B}$ whose $(i,j)$ entry is
  $$\lambda^{(f_i f_j +I)} \cdot \yy = \sum \lambda_l^{(f_if_j+I)}
  y_l.$$
  The {\bf $k$-th-truncated combinatorial moment matrix}
  $M_{\B_k}(\yy)$ is the finite (upper left principal) submatrix of
  $M_{\B}(\yy)$ indexed by $\B_k$.
\end{definition}

Although only a finite number of the components in
$\lambda^{(f_if_j+I)}$ are non-zero, for practical purposes we need to
control exactly which indices can be non-zero.  One way to do this is
by choosing $\B$ such that if $f+I$ has degree $k$ then $f+I \in
\textup{span}(\B_k)$. This is true for instance if $\B$ is the set of
{\em standard monomials} of a {\em term order} that respects degree
\cite{CLO}. If $\B$ has this property then $M_{\B_k}(\yy)$ only
depends on the entries of $\yy$ indexed by $\B_{2k}$. 

\begin{theorem}\label{thm:combinatorial matrices formulation}
  For each positive integer $k$, 
  $$\textup{proj}_{\RR^{\B_1}}\{ \yy \in \RR^{\B_{2k}} \,:\,
  M_{\B_k}(\yy) \succeq 0, \, y_0 = 1\} = \ff_1 (Q_{k}(I)),$$ where
  $y_0$ is the first entry of $\yy \in \RR^{B_{2k}}$,
  $\textup{proj}_{\RR^{\B_1}}$ is the projection onto the coordinates
  indexed by $\B_1$, and for $\pp \in \RR^n$, $\ff_1(\pp) :=
  (f_i(\pp))_{f_i+I \in \B_1}$.
\end{theorem}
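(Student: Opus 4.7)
The plan is to translate the SDP on the right-hand side into the definition of $Q_k(I)$ via the natural identification between linear functionals $y \in (\RR[\xx]/I)'$ and coordinate vectors $\yy \in \RR^\B$ given by $y_l := y(f_l + I)$. Three things need to be checked: (a) the semidefinite condition $M_{\B_k}(\yy) \succeq 0$ is equivalent to $y \in \mathcal{M}_k(I)^*$; (b) the normalization $y_0 = 1$ is equivalent to $y \in \mathcal{Y}_1$; and (c) the projection $\textup{proj}_{\RR^{\B_1}}(\yy)$ equals $\ff_1(\pi_I(y))$.

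For (a), I would show that $M_{\B_k}(\yy)$ is exactly the matrix of the bilinear form $H_{y,k}$ in the basis $\B_k$. Expanding $f_if_j + I = \sum_l \lambda_l^{(f_if_j+I)}(f_l + I)$ in $\B$ and using linearity of $y$ gives
\[
H_{y,k}(f_i+I,f_j+I) = y(f_if_j+I) = \sum_l \lambda_l^{(f_if_j+I)} y_l,
\]
which is exactly the $(i,j)$-entry of $M_{\B_k}(\yy)$. Lemma~\ref{thm:sdpformulation} then yields the equivalence $M_{\B_k}(\yy) \succeq 0 \iff y \in \mathcal{M}_k(I)^*$. For (b), since $1+I$ is the unique (up to scaling) degree-zero basis element, we can take $f_0 = 1$, so $y_0 = y(1+I)$. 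For (c), taking a degree-$\le 1$ representative $f_i = a_0 + \sum_j a_j x_j$ for each $f_i + I \in \B_1$, linearity of $y$ gives
\[
f_i(\pi_I(y)) = a_0 + \sum_j a_j\, y(x_j+I) = y(f_i+I) = y_i,
\]
so the $\B_1$-projection of $\yy$ equals $\ff_1(\pi_I(y))$ coordinate by coordinate.

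The one subtlety I expect to need care is that $\yy$ in the SDP set lives only in $\RR^{\B_{2k}}$, whereas a functional $y$ on $\RR[\xx]/I$ is determined by values on all of $\B$. Here I would invoke the standing assumption on $\B$ stated just before the theorem: whenever $g + I$ has degree $d$, $g+I$ lies in $\textup{span}(\B_d)$. Since $\deg(f_if_j + I) \leq 2k$ whenever $f_i + I, f_j + I \in \B_k$, this forces $\lambda_l^{(f_if_j+I)} = 0$ for every $f_l + I \notin \B_{2k}$, so both $M_{\B_k}(\yy)$ and the values $y_l$ for $f_l + I \in \B_1$ depend only on the $\B_{2k}$-entries of $\yy$. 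With this in hand, the forward inclusion $\ff_1(Q_k(I)) \subseteq \textup{proj}_{\RR^{\B_1}}(\cdots)$ follows by starting with $\pp \in Q_k(I)$, choosing a witness $y \in \mathcal{M}_k(I)^* \cap \mathcal{Y}_1$ with $\pi_I(y) = \pp$, and restricting its associated $\yy$-vector to $\B_{2k}$; the reverse inclusion follows by extending any feasible $\yy \in \RR^{\B_{2k}}$ arbitrarily to a vector in $\RR^\B$, whose induced functional $y$ then lies in $\mathcal{M}_k(I)^* \cap \mathcal{Y}_1$ and projects to the required point via $\pi_I$. Combining (a)--(c) in both directions completes the proof.
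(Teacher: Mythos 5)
Your proof is correct and follows essentially the same route as the paper: identify $\yy \in \RR^{\B_{2k}}$ with a functional on $\RR[\xx]/I$, observe that $M_{\B_k}(\yy)$ is the matrix of $H_{y,k}$ in the basis $\B_k$ so that positive semidefiniteness matches membership in $\mathcal{M}_k(I)^*$ via Lemma~\ref{thm:sdpformulation}, and match $y_0=1$ with $\mathcal{Y}_1$ and the $\B_1$-projection with $\ff_1\circ\pi_I$. The only (immaterial) cosmetic difference is that the paper fixes the zero extension of $\yy$ to $(\RR[\xx]/I)'$ while you allow an arbitrary extension; both are justified by the degree-compatibility assumption on $\B$, which you correctly invoke.
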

\begin{proof}
  We may identify $\yy =(y_i) \in \RR^{\B_{2k}}$ with the operator
  $\bar{y} \in (\RR[\xx]/I)'$ where $\bar{y}(f_i+I)=y_i$ if $f_i+I
  \in \B_{2k}$ and zero otherwise. Then $M_{\B_k}(\yy)$ is simply the
  matrix representation of $H_{\bar{y},k}$ in the basis $\B$, since
  we assumed that if $\deg \,(f_i+I), \deg \,(f_j
  + I) \leq k$ then $\bar{y}(f_if_j+I)$ depends only on the value of
  $\bar{y}$ on $\B_{2k}$. Therefore, $\textup{proj}_{\RR^{\B_1}}\{ \yy
  \in \RR^{\B_{2k}} \,:\, M_{\B_k}(\yy) \succeq 0\}$
  equals $$\{(\bar{y}(f_i+I))_{\B_1} : \bar{y} \in (\RR[\xx]/I)',
  H_{\bar{y},k} \succeq 0\}.$$ 
  Furthermore, since $f_i$ is linear whenever $f_i+I \in \B_1$,
  $$(\bar{y}(f_i+I))_{\B_1}= (f_i(\pi_I(\bar{y})))_{\B_1} =:
  \ff_1(\pi_I(\bar{y}))$$ so by Lemma~\ref{thm:sdpformulation},
  $\textup{proj}_{\RR^{\B_1}}\{ \yy \in \RR^{\B_{2k}} \,:\,
  M_{\B_k}(\yy) \succeq 0, \, y_0 = 1\} = \ff_1 (Q_{k}(I)$.
\end{proof}

\begin{corollary} \label{cor:simple version} Suppose $\B_1 = \{1+I,
  x_1+I, \ldots, x_n+I\}$ and denote by $y_0,y_1, \ldots, y_n$ the
  first $n+1$ coordinates of $\yy \in \RR^{\B_{2k}}$, then $$Q_{k}(I)
  = \{ (y_1, \ldots, y_n) \,:\, \yy \in
  \RR^{\B_{2k}}\,\textup{with}\,\, M_{\B_k}(\yy) \succeq 0
  \,\textup{and} \, y_0 = 1\}.$$
\end{corollary}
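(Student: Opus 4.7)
The plan is to apply Theorem~\ref{thm:combinatorial matrices formulation} directly, specialized to the hypothesis that $\B_1 = \{1+I, x_1+I, \ldots, x_n+I\}$. Under this assumption, the tuple $\ff_1 = (f_i+I : f_i+I \in \B_1)$ is simply $(1+I, x_1+I, \ldots, x_n+I)$, so for any point $\pp = (p_1, \ldots, p_n) \in \RR^n$ we have $\ff_1(\pp) = (1, p_1, \ldots, p_n) \in \RR^{\B_1}$.

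Substituting this description of $\ff_1$ into the conclusion of Theorem~\ref{thm:combinatorial matrices formulation}, I would first rewrite the right-hand side as
$$\ff_1(Q_k(I)) = \{(1, p_1, \ldots, p_n) \,:\, (p_1, \ldots, p_n) \in Q_k(I)\}.$$
For the left-hand side, the projection $\textup{proj}_{\RR^{\B_1}}$ sends $\yy \in \RR^{\B_{2k}}$ to its first $n+1$ coordinates $(y_0, y_1, \ldots, y_n)$, and the normalization $y_0 = 1$ is imposed by the defining constraints. Hence the theorem reads
$$\{(1, y_1, \ldots, y_n) \,:\, \yy \in \RR^{\B_{2k}},\, M_{\B_k}(\yy) \succeq 0,\, y_0 = 1\} = \{(1, p_1, \ldots, p_n) \,:\, \pp \in Q_k(I)\}.$$

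Finally, I would drop the redundant leading coordinate $1$ on both sides (which is a bijection between the two sets above and their projections onto the last $n$ coordinates) to obtain
$$Q_k(I) = \{(y_1, \ldots, y_n) \,:\, \yy \in \RR^{\B_{2k}},\, M_{\B_k}(\yy) \succeq 0,\, y_0 = 1\},$$
which is precisely the stated identity. There is no genuine obstacle: the corollary is a direct unwinding of Theorem~\ref{thm:combinatorial matrices formulation} in the special case where the degree-$\leq 1$ portion of the chosen basis $\B$ coincides with the obvious monomial basis $\{1+I, x_1+I, \ldots, x_n+I\}$ of $\RR[\xx]_1/I$.
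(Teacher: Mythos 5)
Your proposal is correct and matches the paper's intent: the corollary is left unproved in the paper precisely because it is the direct specialization of Theorem~\ref{thm:combinatorial matrices formulation} to $\B_1 = \{1+I, x_1+I, \ldots, x_n+I\}$, under which $\ff_1(\pp) = (1, p_1, \ldots, p_n)$ and $\textup{proj}_{\RR^{\B_1}}$ returns the first $n+1$ coordinates $(y_0,\ldots,y_n)$ with $y_0=1$, so dropping the redundant leading $1$ yields the stated identity.
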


By Corollary \ref{cor:simple version}, optimizing a linear function
over $Q_k(I)$, hence over $\textup{cl}(Q_k(I))$, is
an SDP and can be solved efficiently.

\begin{example} \label{ex:runningex2} Consider the ideal $I = \langle
  x_1^2x_2 - 1 \rangle \subset \RR[x_1,x_2]$ from
  Example~\ref{ex:runningex} for which $\textup{conv}(\V_{\RR}(I)) =
  \{(s_1,s_2) \in \RR^2 \,:\, s_2 > 0 \}$ was not closed but $I$ was
  $\textup{TH}_2$-exact and $(1,2)$-sos. Note that $\B = \bigcup_{k
    \in \NN} \{x_1^k+I, x_2^k+I, x_1x_2^k+I\}$ is a degree-compatible
  monomial basis for $\RR[x_1,x_2]/I$ for which $$\B_4 =
  \{1,x_1,x_2,x_1^2,x_1x_2,x_2^2,x_1x_2^2,x_1^3,x_2^3,x_1x_2^3,x_1^4,x_2^4
  \}+I.$$ The combinatorial moment matrix $M_{\B_2}(\yy)$ for
  $\yy=(1,y_1,\ldots, y_{11}) \in \RR^{\B_4}$ is
  $$ \begin{array}{ll}
     & \hspace*{.2cm} \begin{array}{cccccc} 1 & \, x_1 & x_2 &
     x_1^2 & x_1x_2 & \hspace*{-.1cm} x_2^2 
     \end{array} \\ 
\vspace*{-.2cm}
& \\ 
\begin{array}{c}
    1 \\ x_1 \\ x_2 \\ x_1^2 \\ x_1x_2 \\ x_2^2 
     \end{array} & \hspace*{-.2cm}
\left( \begin{array}{cccccc}
1 & y_1 & y_2 & y_3 & y_4 & y_5 \\
y_1 & y_3 & y_4 & y_6 & 1 & y_7 \\
y_2 & y_4 & y_5 & 1 & y_7 & y_8 \\
y_3 & y_6 & 1 & y_9 & y_1 & y_2 \\
y_4 & 1 & y_7 & y_1 & y_2 & y_{10}\\
y_5 & y_7 & y_8 & y_2 & y_{10} & y_{11}
\end{array} \right)
\end{array}
$$
If $M_{\B_2}(\yy) \succeq 0$, then the principal minor indexed by
$x_1$ and $x_1x_2$ implies that $y_2y_3 \geq 1$ and so in particular,
$y_2 \neq 0$ for all $\yy \in Q_{2}(I)$. However, since $Q_{2}(I)
\supseteq \textup{conv}(\V_{\RR}(I)) = \{(s_1,s_2) \in \RR^2 \,:\, s_2
> 0 \}$, it must be that $Q_{2}(I) = \textup{conv}(\V_{\RR}(I))$ which
shows that $Q_{2}(I)$ is not closed.
\end{example}

\begin{remark}\label{rem:running example}
  Example \ref{ex:runningex2} can be modified to show that $Q_k(I)$
  may not be closed even if $\V_{\RR}(I)$ is finite. To see this,
  choose sufficiently many pairs of points $(\pm t,1/t^2)$ on the
  curve $x_1^2x_2=1$ to form a set $S$ such that the ideal $\I(S)$ has a
  monomial basis $\B'$ in which $\B'_4$ equals the $\B_4$ from
  above. For instance, $S = \{(\pm t, 1/t^2) \,:\, t=1,\ldots,7 \}$
  will work. Then $Q_{2}(\I(S))$ coincides with $Q_{2}(I)$ computed
  above and so is not a closed set.
\end{remark}

We now show that in the particular case of vanishing ideals of $0/1$
points, which are real radical ideals, the closure in
Theorem~\ref{thm:basis free formulation} ($\textup{TH}_k(I) =
\textup{cl}(Q_k(I))$) is not needed. Most ideals that occur in
combinatorial optimization have this form and we will see important
examples in Section~\ref{sec:examples}. Remark \ref{rem:running
  example} shows that the closure cannot be removed for arbitrary
finite point sets.

\begin{proposition} \label{prop:theta=Q} If $S$ is a set of $0/1$
  points in $\RR^n$ and $I=\I(S)$ then for all positive integers $k$,
  $\textup{TH}_k(I)=Q_{k}(I)$.
\end{proposition}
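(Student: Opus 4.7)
The plan is to show that $Q_k(I)$ is already closed, whereupon Corollary~\ref{cor:equality for real radical ideals} (which applies because $I = \I(S)$ is real radical) will yield $\textup{TH}_k(I) = \textup{cl}(Q_k(I)) = Q_k(I)$.

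The key structural feature I would exploit is that since $S \subseteq \{0,1\}^n$, we have $x_i^2 - x_i \in I$ for every $i$, so every polynomial reduces modulo $I$ to a multilinear one. In particular, $V := \RR[\xx]_{2k}/I$ is finite-dimensional, spanned by the classes of multilinear monomials $x^{\alpha}$ with $|\alpha| \leq 2k$. Setting $M := \mathcal{M}_k(I)$ viewed as a cone in $V$, the value of $y \in (\RR[\xx]/I)'$ enters the definition of $Q_k(I)$ only through the restriction $\bar{y} := y|_V$, so one may rewrite $Q_k(I) = \pi(K)$, where $K := M^* \cap \{\bar{y} \in V' : \bar{y}(1+I) = 1\}$ and $\pi : V' \to \RR^n$ is the continuous linear map $\bar{y} \mapsto (\bar{y}(x_i+I))_{i=1}^n$. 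Since $V'$ is finite-dimensional and $K$ is closed there, it suffices to prove that $K$ is bounded; then $K$ is compact and $Q_k(I) = \pi(K)$ is closed, as desired.

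The main technical step will be to bound $|\bar{y}(x^\alpha + I)|$ uniformly over $\bar{y} \in K$, for each multilinear $\alpha$ with $|\alpha| \leq 2k$. When $|\alpha| \leq k$, the relations $(x^\alpha)^2 \equiv x^\alpha$ and $(1-x^\alpha)^2 \equiv 1-x^\alpha$ modulo $I$ --- both immediate from $x_i^2 \equiv x_i$ --- place both $x^\alpha + I$ and $1 - x^\alpha + I$ inside $M$, giving $0 \leq \bar{y}(x^\alpha+I) \leq 1$. For $k < |\alpha| \leq 2k$, I would split $\alpha = \beta \sqcup \gamma$ with $|\beta|, |\gamma| \leq k$ and expand
$$(x^\beta \pm x^\gamma)^2 \;\equiv\; x^\beta \pm 2\, x^\alpha + x^\gamma \pmod{I};$$
applying $\bar{y}$ to this element of $M$ gives $\pm 2\,\bar{y}(x^\alpha+I) \leq \bar{y}(x^\beta+I) + \bar{y}(x^\gamma+I) \leq 2$, and hence $|\bar{y}(x^\alpha+I)| \leq 1$. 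Because the multilinear classes $x^\alpha + I$ with $|\alpha|\leq 2k$ span $V$, this uniform bound on a spanning set translates into a uniform operator-norm bound on $\bar{y}$, proving that $K$ is compact.

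The main hurdle is the higher-degree range $k < |\alpha| \leq 2k$, where $x^\alpha$ does not itself lie in $\RR[\xx]_k$ and so cannot simply be squared to stay inside $\mathcal{M}_k(I)$. The splitting trick above sidesteps this by exploiting the multilinear structure --- the product $x^\beta x^\gamma$ collapses to $x^\alpha$ whenever $\beta \sqcup \gamma = \alpha$ --- thereby producing two-sided inequalities on $\bar{y}(x^\alpha+I)$ from squares that genuinely live in $\mathcal{M}_k(I)$.
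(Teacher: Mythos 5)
Your proof is correct, and it takes a genuinely different route from the paper's. The paper invokes Corollary~\ref{cor:theta-lasserre}: it constructs a constant polynomial $l+1$ and shows $l+1 \equiv \ff_k^t A \ff_k \bmod I$ for an explicit positive definite matrix $A$, so that $l+1+I$ is an interior point of $\mathcal{M}_k(I)$; closedness of $Q_k(I)$ then follows from the Borwein--Lewis closed-image criterion (Lemma~\ref{lem:borweinlewis}). You instead prove closedness of $Q_k(I)$ directly, by realizing it as the continuous image of the set $K = M^* \cap \{\bar y : \bar y(1+I)=1\}$ in the finite-dimensional dual $V' = (\RR[\xx]_{2k}/I)'$, and showing $K$ is compact via the uniform bounds $0 \leq \bar y(x^\alpha+I) \leq 1$ (for $|\alpha|\leq k$, from $(x^\alpha)^2 \equiv x^\alpha$ and $(1-x^\alpha)^2 \equiv 1 - x^\alpha$) and $|\bar y(x^\alpha+I)| \leq 1$ (for $k < |\alpha| \leq 2k$, via the splitting $(x^\beta \pm x^\gamma)^2$). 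Both arguments exploit exactly the same structural fact --- $x_i^2 - x_i \in I$ and hence idempotence of multilinear monomials mod $I$ --- but yours is more elementary and self-contained, sidestepping the abstract duality lemma in favor of the standard fact that moment matrices with a unit diagonal have uniformly bounded entries. The paper's route is less computational and plugs into a reusable criterion (Corollary~\ref{cor:theta-lasserre}), which applies whenever one can exhibit an interior point; your compactness argument is more tailored to the $0/1$ case. One small point of rigor: you should pass from a spanning set to an explicit basis among the multilinear $x^\alpha + I$ before concluding the bound on a spanning set controls $\|\bar y\|$, but this is immediate since $V$ is finite-dimensional.
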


\begin{proof}
  By Corollary~\ref{cor:theta-lasserre} it is enough to show that
  there is a linear polynomial $g \in \RR[\xx]$ such that $g \equiv
  \ff_k^t A \ff_k$ mod $I$ for a {\em positive definite} matrix $A$
  and some basis of $\RR[\xx]/I$ with respect to which $\ff_k$ was
  determined.  Let $\B$ be a monomial basis for $\RR[\xx]/I$ and
  $\B_k=\{1,p_1,\ldots,p_l\}+I$. Let $\cc \in \RR^{l}$ be the vector
  with all entries equal to $-2$, and $D \in \RR^{l \times l}$ be the
  diagonal matrix with all diagonal entries equal to $4$. Since $x_i^2
  \equiv x_i$ mod $I$ for $i=1,\ldots,n$ and $\B$ is a monomial basis,
  for any $f+I \in \B$, $f \equiv f^2$ mod $I$. Therefore, the
  constant
$$l+1 \equiv \ff_k^t
\left[
\begin{array} {cc}
l+1 & \cc^t \\
\cc & D
\end{array}
\right]\ff_k \,\,\textup{mod} \,\,I,$$ and it is enough to prove that
the square matrix on the right is positive definite. This follows from
the fact that $D$ is positive definite and its {\em Schur
  complement} $(l+1) - \cc^t D^{-1} \cc = 1$ is positive
(\cite[Theorem 7.7.6]{HornJohnson}).
\end{proof}

\section{Combinatorial Examples} \label{sec:examples}

An important area of application for the theta body hierarchy
constructed in Section~\ref{sec:theta bodies} is {\em combinatorial
  optimization} which is typically concerned with optimizing a linear
function over a finite set of integer points. In this section, we
compute theta bodies for two important problems in combinatorial
optimization -- the {\em maximum stable set problem} and the {\em
  maximum cut problem} in a graph.  We explain the observations about
the stable set problem which motivated Lov{\'a}sz to pose
Problem~\ref{prob:lovasz}. The cut problem is modeled in two different
ways. The first is a non-standard approach which is described
fully. For the second, more standard model of the cut problem, theta
bodies provide a new hierarchy of semidefinite relaxations for the
{\em cut polytope} that is studied in detail in \cite{GLPT}. We
outline those results briefly here.  A recent trend in theoretical
computer science has been to study the computational complexity of
approximating problems in combinatorial optimization via the standard
hierarchies of convex relaxations to these problems such as those in
\cite{LovaszSchrijver91} and \cite{Lasserre1, Lasserre2}. Our theta
body approach provides a new mechanism to establish such complexity
results.

\subsection{The Maximum Stable Set Problem}\label{subsec:stable sets}
Let $G=([n],E)$ be an undirected graph with vertex set
$[n]=\{1,\ldots,n\}$ and edge set $E$. A {\em stable set} in $G$ is a
set $U \subseteq [n]$ such that for all $i,j \in U$, $\{i,j\} \not \in
E$. The maximum stable set problem seeks the stable set of largest
cardinality in $G$, the size of which is the {\em stability number} of
$G$, denoted as $\alpha(G)$. 

The maximum stable set problem can be modeled as follows. For each
stable set $U \subseteq [n]$, let $\chi^U \in \{0,1\}^n$ be its {\em
  characteristic vector} defined as $(\chi^U)_i = 1$ if $i \in U$ and
$(\chi^U)_i = 0$ otherwise.  Let $S_G \subseteq \{0,1\}^n$ be the set
of characteristic vectors of all stable sets in $G$. Then
$\textup{STAB}(G) := \conv(S_G)$ is called the {\em stable set
  polytope} of $G$ and the maximum stable set problem is, in theory,
the linear program $\textup{max}\{ \sum_{i=1}^{n} x_i \,:\, \xx \in
\textup{STAB}(G) \}$ with optimal value $\alpha(G)$. However,
$\textup{STAB}(G)$ is not known apriori, and so one resorts to
relaxations of it over which one can optimize $\sum_{i=1}^{n} x_i$.

In \cite{ShannonCapacity}, Lov{\'a}sz introduced, $\textup{TH}(G)$, a
convex relaxation of $\textup{STAB}(G)$, called the {\em theta body}
of $G$. The problem $\textup{max}\{ \sum_{i=1}^{n} x_i \,:\, \xx \in
\textup{TH}(G) \}$ is a SDP which can be solved to arbitrary precision
in polynomial time in the size of $G$. The optimal value of this SDP
is called the {\em theta number} of $G$ and provides an upper
bound on $\alpha(G)$. See \cite[Chapter 9]{GLS} and
\cite{BruceShepherd} for more on the stable set problem and
$\textup{TH}(G)$.  The body $\textup{TH}(G)$ was the first example of
a SDP relaxation of a discrete optimization problem and snowballed the
use of SDP in combinatorial optimization. See \cite{LaurentRendl,
  Lovasz} for surveys. Recall that a graph $G$ is {\em perfect} if and
only if $G$ has no induced odd cycles of length at least five or their
complements. Lov{\'a}sz showed that $\textup{STAB}(G) =
\textup{TH}(G)$ if and only if $G$ is perfect. This equality shows
that the maximum stable set problem can be solved in polynomial time
in the size of $G$ when $G$ is a perfect graph, and this geometric
proof is the only one known for this complexity result.

The theta body $\textup{TH}(G)$ has many definitions (see
\cite[Chapter 9]{GLS}) but the one relevant for this paper was
observed by Lov{\'a}sz and appears without proof in
\cite{LovaszStablesetsPolynomials}. Let $I_G := \langle x_j^2 - x_j
\,\,\forall\,\,j \in [n], \,\,\, x_ix_j \,\, \forall \,\, \{i,j\} \in
E \rangle \subseteq \RR[\xx]$. Then check that $\V_{\RR}(I_G) = S_G$
and that $I_G$ is both zero-dimensional and real radical.  Lov{\'a}sz
observed that
\begin{equation} \label{lovasz observation}
\textup{TH}(G) = \{ \xx \in \RR^n \,:\, f(\xx) \geq 0  \,\,\forall
 \,\,\textup{ linear } \,\,f\,\,\textup{that is $1$-sos mod} \,\,I_G
 \}.
\end{equation}
By Definition~\ref{def:theta} (1), $\textup{TH}(G)$ is exactly the
first theta body, $\textup{TH}_1(I_G)$, of the ideal $I_G$, and by the
above discussion, $I_G$ is $\textup{TH}_1$-exact (i.e.,
$\textup{TH}_1(I_G) = \textup{STAB}(G)$) if and only if $G$ is
perfect. Lov{\'a}sz observed that, in fact, $I_G$ is $(1,1)$-sos if
and only if $G$ is perfect which motivated Problem~\ref{prob:lovasz}
that asks for a characterizations of all $(1,1)$-sos ideals in
$\RR[\xx]$.  Lov{\'a}sz refers to a $(1,1)$-sos ideal as a {\em
  perfect ideal}. A $(1,1)$-sos ideal $I$ would have the property that
its first and simplest theta body, $\textup{TH}_1(I)$, coincides with
$\textup{cl}(\conv(\V_\RR(I)))$ which is a valuable property for
linear optimization over $\conv(V_\RR(I))$, especially when
$\textup{TH}_1(I)$ is computationally tractable.

The theta body hierarchy of the ideal $I_G$ therefore naturally
extends the theta body of $G$ to a family of nested relaxations of
$\textup{STAB}(G)$. Further, the connection between $\textup{TH}(G)$
and sums of squares polynomials motivated Definition~\ref{def:theta}
which extends the construction of $\textup{TH}(G)$ to a hierarchy of
relaxations of $\V_\RR(I)$ for any ideal $I \subseteq \RR[\xx]$. We
now explicity describe the $k$-th theta body of $I_G$ in terms of
combinatorial moment matrices.

For $U \subseteq [n]$, let $\xx^U := \prod_{i \in U} x_i$. From the
generators of $I_G$ it is clear that if $f \in \RR[\xx]$, then $f
\equiv g$ mod $I_G$ where $g$ is in the $\RR$-span of the set of
monomials $\{ \xx^U \,:\, U \textup{ is a stable set in } G \}$. Check
that $\B := \{\xx^U + I_G \,:\, U \,\,\textup{stable set in} \,\, G\}$
is a basis of $\RR[\xx]/I_G$ containing $1+I_G, x_1 + I_G, \ldots, x_n
+ I_G$. Therefore, by Corollary~\ref{cor:simple version} and
Proposition~\ref{prop:theta=Q} we have
$$\textup{TH}_k(I_G) = \left\{ \yy \in \RR^n \,:\,
\begin{array}{l} 
  \exists \, M \succeq 0, \, M \in \RR^{|\B_k| \times |\B_k|}
  \,\textup{such that} \\ 
  M_{\emptyset \emptyset} = 1,\\
  M_{\emptyset \{i\}} = M_{\{i\} \emptyset} = M_{\{i\} \{i\}} = y_i \\
  M_{U U'} = 0 \,\,\textup{if} \,\,U \cup U' \,\, \textup{is not stable
    in}  \,\, G\\
  M_{U U'} = M_{W W'} \,\,\textup{if} \,\, U \cup U' = W \cup W' 
\end{array}
\right \}.$$ 
In particular,  indexing the one element stable sets by the vertices
of $G$,
$$\textup{TH}_1(I_G) = \left\{ \yy \in \RR^n \,:\,
\begin{array}{l} 
\exists \, M \succeq 0, M \in \RR^{(n+1) \times (n+1)}\,\textup{such that} \\ 
M_{00} = 1,\\
M_{0i} = M_{i0} = M_{ii} =  y_i \,\,\forall\,\,i \in [n]\\
M_{ij} = 0 \,\,\forall \,\, \{i,j\} \in E
\end{array}
\right \}.$$

This description of $\textup{TH}_1(I_G)$ coincides with the
semidefinite description of $\textup{TH}(G)$ (see \cite[Lemma
2.17]{LovaszSchrijver91} for instance) and so, $\textup{TH}(G) =
\textup{TH}_1(I_G)$.  Corollary~\ref{cor:real radical equivalence}
confirms Lov{\'a}sz's observation and adds to his other
characterizations of a perfect graph as follows.

\begin{theorem} \cite[Chapter 9]{GLS}  \label{thm:knownforperfectgraphs}
  The following are equivalent for a graph $G$.
\begin{enumerate}
\item $G$ is perfect.
\item $\textup{STAB}(G) = \textup{TH}(G)$. 
\item $\textup{TH}(G)$ is a polytope. 
\item The complement $\overline G$ of $G$ is perfect.
\item $I_G$ is $(1,1)$-sos.
\end{enumerate}
\end{theorem}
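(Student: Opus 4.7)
The plan is to treat the equivalences $(1) \Leftrightarrow (2) \Leftrightarrow (3) \Leftrightarrow (4)$ as the classical characterizations of perfect graphs, which are proved in \cite[Chapter 9]{GLS} via the Strong Perfect Graph Theorem and Lov\'asz's original analysis of $\textup{TH}(G)$. So essentially the only new work is to splice $(5)$ into this chain of equivalences, and the cleanest way to do that is to prove $(2) \Leftrightarrow (5)$.

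The key observation, already set up in the paragraphs preceding the theorem, is that the ideal $I_G$ is zero-dimensional and real radical: its real variety is $S_G \subseteq \{0,1\}^n$, a finite subset of real points, and $I_G$ is the vanishing ideal of this set (as can be checked from the generators $x_j^2 - x_j$ and $x_ix_j$ for edges). This means Corollary~\ref{cor:real radical equivalence} applies: $I_G$ is $(1,1)$-sos if and only if $I_G$ is $\textup{TH}_1$-exact, i.e.\ $\textup{TH}_1(I_G) = \textup{cl}(\conv(\V_\RR(I_G)))$.

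Now I would unwind both sides of this equivalence to turn it into the statement in $(2)$. On the right, because $S_G$ is finite, $\conv(S_G) = \textup{STAB}(G)$ is already a (closed) polytope, so the closure is harmless and we have $\textup{cl}(\conv(\V_\RR(I_G))) = \textup{STAB}(G)$. On the left, the explicit SDP description of $\textup{TH}_1(I_G)$ obtained from the combinatorial moment matrix formulation (Corollary~\ref{cor:simple version}, together with Proposition~\ref{prop:theta=Q} to drop the closure since $S_G$ consists of $0/1$ points) matches exactly the standard semidefinite description of $\textup{TH}(G)$ in \cite[Lemma 2.17]{LovaszSchrijver91}, hence $\textup{TH}_1(I_G) = \textup{TH}(G)$. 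Combining these two identifications, the real radical equivalence reads: $I_G$ is $(1,1)$-sos iff $\textup{TH}(G) = \textup{STAB}(G)$, which is precisely $(5) \Leftrightarrow (2)$.

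There is no serious obstacle here; all of the heavy lifting has already been done. The classical equivalences $(1)$--$(4)$ are quoted from \cite{GLS}, and $(5)$ is slotted in by a short appeal to Corollary~\ref{cor:real radical equivalence} plus the two identifications $\textup{cl}(\conv(S_G)) = \textup{STAB}(G)$ and $\textup{TH}_1(I_G) = \textup{TH}(G)$. The mildly delicate point worth flagging explicitly in the written proof is why $I_G$ is real radical: since $\V_\RR(I_G) = S_G$ is finite, $\I(S_G)$ is real radical by the Real Nullstellensatz, and one checks that the given generators of $I_G$ already cut out $\I(S_G)$, so $I_G = \I(S_G)$ is real radical. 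After that, everything is formal.
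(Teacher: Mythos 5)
Your proposal is correct and matches the paper's approach: the paper likewise treats $(1)$--$(4)$ as the classical content of \cite[Chapter 9]{GLS}, identifies $\textup{TH}_1(I_G)$ with $\textup{TH}(G)$ via the combinatorial moment matrix description and \cite[Lemma 2.17]{LovaszSchrijver91}, and then slots in $(5)$ by observing that $I_G$ is real radical so Corollary~\ref{cor:real radical equivalence} gives $(5) \Leftrightarrow (2)$.
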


The usual Lasserre relaxations of the maximum stable set problem are
set up from the following initial linear programming relaxation of
$\textup{STAB}(G)$:
$$\textup{FRAC}(G) := \{ \xx \in \RR^n \,:\, x_i \geq 0 \,\,\forall
\,\,i \in [n], \, 1-x_i-x_j \geq 0 \,\,\forall \,\,\{i,j\} \in E \}.$$
Note that $S_G = \textup{FRAC}(G) \cap \{0,1\}^n$.  The $k$-th
Lasserre relaxation of $\textup{STAB}(G)$ (see \cite{Lasserre2},
\cite{LaurentComparisonpaper}) uses both the ideal $\langle x_i^2-x_i
\,:\, i \in [n] \rangle$ and the inequality system describing
$\textup{FRAC}(G)$, whereas in the theta body formulation,
$\textup{TH}_k(I_G)$, there is only the ideal $I_G$ and no
inequalities.  Despite this difference, \cite[Lemma
20]{LaurentComparisonpaper} proves that the usual Lasserre hierarchy
is exactly our theta body hierarchy for the stable set problem.  This
interpretation of the Lasserre hierarchy provides new tools to
understand these relaxations such as establishing the validity of
inequalities over them as shown below.

Since no monomial in the basis $\B$ of $\RR[\xx]/I_G$ has degree
larger than $\alpha(G)$, for any $G$, $I_G$ is $(1,\alpha(G))$-sos and
$\textup{STAB}(G) = \textup{TH}_{\alpha(G)}(I_G)$. However, for many
non-perfect graphs the theta-rank of $I_G$ can be a lot smaller than
$\alpha(G)$.  For instance if $G$ is a $(2k+1)$-cycle, then $\alpha(G)
= k$ while Proposition~\ref{prop:2sos} below shows that the theta-rank
of $I_G$ is two.

\begin{theorem} \label{thm:stabg for oddholes} 
  \cite[Corollary 65.12a]{SchrijverB} If $G=([n],E)$ is an odd cycle
  with $n \geq 5$, then $\textup{STAB}(G)$ is determined by the
  following inequalities:
  $$
  x_i \geq 0 \,\,\forall \,\, i \in [n], \,\,\, 1 - \sum_{i \in K}
  x_i \geq 0 \,\,\forall \textup{ cliques } \,\,K \textup{ in } G, 
  \,\,\, \alpha(G) - \sum_{i \in [n]} x_i \geq 0.
$$
\end{theorem}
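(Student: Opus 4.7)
The plan is to prove both inclusions of the claimed equality. Let $P$ denote the polytope on the right. The inclusion $\textup{STAB}(G)\subseteq P$ is immediate: since $C_{2k+1}$ with $n\geq 5$ is triangle-free, the only cliques are vertices and edges; $\alpha(G)=(n-1)/2=k$; and every characteristic vector of a stable set satisfies all the listed inequalities.

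For the reverse inclusion, I would show that every vertex $\xx^*$ of $P$ lies in $\{0,1\}^n$, which forces it to be the characteristic vector of a stable set. Suppose toward contradiction that $F:=\{i\,:\,0<x_i^*<1\}$ is nonempty. A first observation: if $i\in F$ is adjacent to $j\notin F$, then $x_j^*=0$, since the alternative $x_j^*=1$ would violate $x_i^*+x_j^*\leq 1$ (recall $x_i^*>0$). Thus the only tight constraints touching coordinates in $F$ come from edges inside $F$ or from the odd-cycle inequality, and $F$ decomposes into arcs---maximal runs of consecutive cycle-vertices---each flanked by zero-valued vertices.

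Next, a rank count: an arc of length $\ell$ admits at most $\ell-1$ linearly independent tight arc-edge constraints (the edge-vertex incidence of a path has rank $\ell-1$), and the odd-cycle inequality contributes at most one more. For the tight constraints at $\xx^*$ to span the $|F|$ fractional coordinates, $F$ must either equal the whole cycle $[n]$ (with all $n$ cycle-edges tight) or be a single arc with the odd-cycle inequality also tight. In the first case, the cyclic system $x_i^*+x_{i+1}^*=1$ with $n$ odd uniquely yields $x_i^*=1/2$ for all $i$, giving $\sum x_i^*=n/2>k$ and contradicting $\xx^*\in P$. In the second case, the tight arc-edges force the fractional values to alternate as $t,1-t,t,1-t,\ldots$ for some $t\in(0,1)$; substituting into the tight odd-cycle equation, and using that the $0$- and $1$-valued coordinates contribute integer amounts, yields either an equation $t=k-m$ with $m\in\ZZ$ (when the arc length is odd), contradicting $t\in(0,1)$, or a relation independent of $t$ (when the arc length is even), in which case the odd-cycle constraint is in fact linearly dependent on the arc-edges, so the tight system has rank $|F|-1$ and violates extremality of $\xx^*$. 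In every case $F=\emptyset$, so $\xx^*\in\{0,1\}^n$.

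The main obstacle I anticipate is this rank-count plus the final parity case analysis: one must verify carefully that the odd-cycle inequality genuinely augments the rank in the odd-arc subcase (and is linearly dependent in the even-arc subcase), and then combine this with the integrality argument to rule out fractional $t$. Individually each step is a short linear-algebra or parity computation, but packaging them into a single clean contradiction is where the technical work lies.
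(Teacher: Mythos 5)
The paper does not prove this statement; it is quoted from \cite[Corollary 65.12a]{SchrijverB}, so there is no in-paper argument against which to compare your proposal. On its own merits, your strategy---show that every vertex of the right-hand polytope $P$ is integral, via the observation that fractional coordinates sit in arcs flanked by zero-valued vertices, a rank count of the tight constraints, and a parity/integrality argument against a tight odd-cycle equality---is sound, and the individual computations check out.

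There is, however, a gap in your case enumeration. When $F=[n]$ you assert that the rank requirement forces all $n$ cycle edges to be tight, but that is not true: if exactly $n-1$ cycle edges are tight they form a Hamiltonian path contributing rank $n-1$, and because $n$ is odd one can verify that the all-ones row of the odd-cycle constraint does \emph{not} lie in the span of those $n-1$ path rows, so the odd-cycle equality can supply the missing dimension. This subcase escapes your ``single arc flanked by zero-valued vertices'' description, since here there are no zero-valued vertices at all. It is nevertheless dispatched by the same mechanism as your odd-length-arc subcase: the alternating pattern along the Hamiltonian path puts $k+1$ coordinates equal to $t$ and $k$ equal to $1-t$, so the tight odd-cycle equality reads $t+k=k$, forcing $t=0$ and contradicting $t\in(0,1)$. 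Add this case (and perhaps remark explicitly that $P\subseteq[0,1]^n$ is bounded, so integrality of all vertices does give $P=\textup{conv}(S_G)=\textup{STAB}(G)$), and your argument is complete.
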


\begin{proposition} \label{prop:2sos}
  If $G$ is an odd cycle with at least five vertices, then $I_G$ is
  $(1,2)$-sos and therefore, $\textup{TH}_2$-exact.
\end{proposition}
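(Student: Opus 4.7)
The plan is to reduce the claim to an explicit sum of squares certificate for each facet inequality of $\textup{STAB}(G)=\textup{conv}(S_G)$. Since $\V_\RR(I_G)=S_G\subseteq\{0,1\}^n$, the ideal $I_G$ is real radical, so by Corollary~\ref{cor:real radical equivalence} it suffices to show $I_G$ is $(1,2)$-sos. Every linear polynomial non-negative on $S_G$ is non-negative on $\textup{STAB}(G)$, and by Theorem~\ref{thm:stabg for oddholes} together with Farkas' lemma it can be written as a non-negative combination of the facet polynomials $\{x_i\}_i$, $\{1-x_i-x_j : \{i,j\}\in E\}$, $\{k-\sum_i x_i\}$, and the constant $1$. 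Since the set of $2$-sos mod $I_G$ polynomials is closed under addition and non-negative scaling, it is enough to show that each such facet polynomial is $2$-sos mod $I_G$.

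The vertex and edge inequalities are handled by the direct identities $x_i\equiv x_i^2$ and $1-x_i-x_j\equiv(1-x_i-x_j)^2$ modulo $I_G$, using the relations $x_i^2\equiv x_i$ and $x_ix_j\equiv 0$ along an edge. The essential step is the odd-hole facet $f=k-\sum_{i=1}^{2k+1} x_i$. Writing $u_i:=1-x_i-x_{i+1}$ (indices mod $n=2k+1$), the defining relations give $u_i^2\equiv u_i$ mod $I_G$, so $\sum_i u_i^2\equiv n-2\sum_i x_i$ mod $I_G$, which rearranges to $2f\equiv \sum_i u_i-1$ mod $I_G$. The strategy is to express $\sum_i u_i-1$ as a non-negative combination of degree-$\le 2$ idempotents of $\RR[\xx]/I_G$; any idempotent $\phi$ of degree at most $2$ satisfies $\phi\equiv\phi^2$ mod $I_G$ and is hence $2$-sos. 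Natural such idempotents beyond the $u_i$ are the products $u_iu_j$ and, more generally, the indicator polynomials $\phi_T:=\prod_{i\in T}(1-x_i)$ for subsets $T\subseteq[n]$ whose induced subgraph in $G$ has independence number at most $2$; these all reduce mod $I_G$ to polynomials of degree $\le 2$.

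The main obstacle is the explicit combinatorial construction for arbitrary $k$. In $C_5$ the formula $2-\sum_i x_i\equiv 2\phi_\emptyset+\sum_{i=1}^5 \phi_{\{i\}}$ mod $I_{C_5}$ does the job, because the $C_5$-stable-set indicators $\phi_\emptyset$ and $\phi_{\{i\}}$ already reduce to degree-$2$ polynomials (as $\alpha(C_5)=2$), so both terms on the right are $2$-sos idempotents. For $C_{2k+1}$ with $k\ge 3$ the indicators of low-cardinality stable sets reduce to polynomials of degree strictly greater than $2$, forcing the use of the more general $\phi_T$'s described above; the crux is then to exhibit, exploiting the $\ZZ_{2k+1}$ cyclic symmetry and the parity fact that $\sum_i u_i$ assumes only odd positive values on $S_G$, a symmetric non-negative combination of such degree-$\le 2$ idempotents equal to $k-\sum_i x_i$ mod $I_G$.
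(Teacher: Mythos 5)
Your setup is correct and matches the paper's up to the point where the work starts: reduce via real radicality and Corollary~\ref{cor:real radical equivalence} to showing $I_G$ is $(1,2)$-sos, invoke Theorem~\ref{thm:stabg for oddholes} and Farkas to restrict attention to facet-defining polynomials, and dispatch $x_i$ and the clique inequalities by the idempotent identities $x_i\equiv x_i^2$, $1-x_i-x_j\equiv(1-x_i-x_j)^2$ mod $I_G$. The reduction $2(k-\sum_i x_i)\equiv\sum_i u_i - 1$ with $u_i=1-x_i-x_{i+1}$ is also correct.

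But the proof has a genuine gap at exactly the step that carries all the content. You reduce the odd-hole facet to the claim that $\sum_i u_i - 1$ is a non-negative combination of degree-$\le 2$ idempotents mod $I_G$, and then you state (your words) that ``the main obstacle is the explicit combinatorial construction for arbitrary $k$'' and that ``the crux is then to exhibit'' such a combination. You never exhibit it. The $C_5$ formula you offer does not even typecheck against your own definition of $\phi_T$ (with $\phi_T=\prod_{i\in T}(1-x_i)$ one has $\phi_\emptyset=1$, so $2\phi_\emptyset+\sum_{i=1}^5\phi_{\{i\}}\equiv 7-\sum_i x_i$, not $2-\sum_i x_i$), and for $k\ge 3$ you give no candidate at all. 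Without the certificate for the odd-hole facet, nothing has been proved.

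It is worth noting why the plan as stated is hard to finish: you deliberately seek a $\ZZ_{2k+1}$-symmetric decomposition, but the paper's certificate is asymmetric, privileging vertex~$1$. The paper sets $p_i:=(1-x_1)(1-x_{2i}-x_{2i+1})$ for $i=1,\ldots,k$ and $g_i:=x_1(1-x_{2i+1}-x_{2i+2})$ for $i=1,\ldots,k-1$; each is a degree-$2$ idempotent mod $I_G$, and a short computation using $x_1x_2,\;x_1x_{2k+1}\in I_G$ gives
$$\sum_{i=1}^{k}p_i^2+\sum_{i=1}^{k-1}g_i^2\;\equiv\; k-\sum_{i=1}^{2k+1}x_i\quad\textup{mod}\;I_G.$$
This is the explicit certificate your argument is missing. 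If you insist on a cyclically symmetric sos certificate, you can symmetrize the paper's one by averaging over the cyclic group, but as written your proposal does not reach a complete proof.
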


\begin{proof} 
  Let $n = 2k+1$ and $G$ be an $n$-cycle. Then $I_G = \langle
  x_i^2-x_i, \,\, x_ix_{i+1} \,\,\forall \,\, i \in [n] \rangle$ where
  $x_{n+1} = x_1$.  Therefore, $(1-x_i)^2 \equiv 1-x_i$ and
  $(1-x_i-x_{i+1})^2 \equiv 1 -x_i-x_{i+1} \,\,\textup{mod}\,\,I_G$.
  This implies that, mod $I_G$,
$$p_i^2:=((1-x_1)(1-x_{2i}-x_{2i+1}))^2 \equiv p_i = 1 - x_1
-x_{2i}-x_{2i+1} + x_1x_{2i} + x_1x_{2i+1}.$$
Summing over $i=1,..,k$, we get 
$$\sum_{i=1}^{k} p_i^2 \equiv k - kx_1 -\sum_{i=2}^{2k+1}x_i +
\sum_{i=3}^{2k} x_1x_i \,\,\textup{mod}\,\,I_G$$
since $x_1x_2$ and $x_1x_{2k+1}$ lie in
$I_G$.  Define $g_i := x_1(1-x_{2i+1}-x_{2i+2})$. Then $g_i^2 - g_i
\in I_G$ and mod $I_G$ we get that
$$\sum_{i=1}^{k-1} g_i^2 \equiv (k-1)x_1 - \sum_{i=3}^{2k} x_1x_i,
\,\,\textup{which implies} \,\, \sum_{i=1}^{k} p_i^2 +
\sum_{i=1}^{k-1} g_i^2 \equiv k - \sum_{i=1}^{2k+1}x_i.$$ 

To prove that $I_G$ is $(1,2)$-sos it suffices to show that the left
hand sides of the inequalities in the description of
$\textup{STAB}(G)$ in Theorem~\ref{thm:stabg for oddholes} are $2$-sos
mod $I_G$ since by Farkas Lemma \cite{Sch}, all other linear
inequalities that are non-negative over $S_G$ are non-negative real
combinations of a set of inequalities defining $\textup{STAB}(G)$.
Clearly, $x_i \equiv x_i^2 \,\,\textup{mod}\,\,I_G$ for all $i \in
[n]$ and one can check that for each clique $K$, $(1 - \sum_{i \in K}
x_i) \equiv (1 - \sum_{i \in K} x_i)^2 \,\,\textup{mod}\,\,I_G$. The
previous paragraph shows that $k - \sum_{i=1}^{2k+1}x_i$ is also
$2$-sos mod $I_G$.
\end{proof}

An induced odd cycle $C_{2k+1}$ in $G$, yields the well-known {\em odd
  cycle inequality} $\sum_{i\in C_{2k+1}} x_i \leq \alpha(C_{2k+1})=k$
that is satisfied by $S_G$ \cite[Chapter
9]{GLS}. Proposition~\ref{prop:2sos} implies that for any graph $G$,
$\textup{TH}_2(I_G)$ satisfies all odd cycle inequalities from $G$
since every stable set $U$ in $G$ restricts to a stable set in an
induced odd cycle in $G$. This general result can also be proved using
results from \cite{LovaszSchrijver91} and
\cite{LaurentComparisonpaper}. The direct arguments used in the proof
of Proposition~\ref{prop:2sos} are examples of the algebraic {\em
  inference rules} outlined by Lov{\'a}sz in
\cite{LovaszStablesetsPolynomials}. Similarly, one can also show that
other well-known classes of inequalities such as the {\em odd
  antihole} and {\em odd wheel} inequalities \cite[Chapter 9]{GLS} are
also valid for $\textup{TH}_2(I_G)$. Schoenebeck \cite{Schoenebeck}
has recently shown that there is no constant $k$ such that
$\textup{STAB}(G) = \textup{TH}_k(I_G)$ for all graphs $G$ (as
expected, unless P=NP).  However, no explicit family of graphs that
exhibit this behaviour is known.


\subsection{Cuts in graphs} \label{subsec:cuts} Given an undirected
connected graph $G = ([n], E)$ and a partition of its vertex set $[n]$
into two parts $V_1$ and $V_2$, the set of edges $\{i,j\} \in E$ such
that exactly one of $i$ or $j$ is in $V_1$ and the other in $V_2$ is
the {\em cut} in $G$ induced by the partition $(V_1,V_2)$. The cuts in
$G$ are in bijection with the $2^{n-1}$ distinct partitions of $[n]$
into two sets. The maximum cut problem in $G$ seeks the cut in $G$ of
largest cardinality.  This problem is NP-hard and has received a great
deal of attention in the literature. A celebrated result in this area
is an approximation algorithm for the max cut problem, due to Goemans
and Williamson \cite{GoemansWilliamson}, that guarantees a cut of size
at least $0.878$ of the optimal cut. It relies on a simple SDP
relaxation of the problem.

We first study a non-standard model of the max cut problem. Let
$$SG := \{ \chi^F \,:\, F \subseteq E\,\,\textup{is contained in a cut
  of} \,\, G \} \subseteq \{0,1\}^E.$$ Then the {\em weighted} max cut
problem with non-negative weights $w_e$ on the edges $e \in E$ is
$\textup{max} \left \{ \sum_{e \in E} w_e x_e \,:\, \xx \in SG \right
\}$, and the vanishing ideal
$$\I(SG) = \langle x_e^2 - x_e \,\,, \xx^T \,:\,
\,\,e \in E, \,\, T \textup{ odd cycle in } G \rangle. $$ A
basis of $\RR[\xx]/{\I(SG)}$ is $$\B = \{ \xx^U + I(SG) \,:\, U
\subseteq E \,\,\textup{does not contain an odd cycle in} \,\, G \}$$
and $1+\I(SG), x_e + \I(SG) \,(\forall \,e \in E)$ lie in
$\B$. Therefore,
$$\textup{TH}_k(\I(SG)) = \left\{ \yy \in \RR^E \,:\,
\begin{array}{l} 
\exists \, M \succeq 0, \, M \in \RR^{|\B_k| \times |\B_k|}
  \,\textup{such that} \\ 
M_{\emptyset \emptyset} = 1,\\
M_{\emptyset \{i\}} = M_{\{i\} \emptyset} = M_{\{i\}\{i\}} = y_i \\
M_{U U'} = 0 \,\,\textup{if} \,\,U \cup U' \,\, \textup{has an odd
  cycle} \\
M_{U U'} = M_{W W'} \,\,\textup{if} \,\, U \cup U' = W \cup W' 
\end{array}
\right \}.$$

In particular,
$$\textup{TH}_1(\I(SG)) = \left\{ \yy \in \RR^E \,:\,
 \begin{array}{l} 
 \exists \, M \succeq 0, \, M \in \RR^{(|E|+1) \times (|E|+1)}
   \,\textup{such that} \\ 
 M_{00} = 1,\\
 M_{0 e} = M_{e 0} = M_{e e} =  y_e \,\,\forall \,\, e \in E
 \end{array}
\right \}.$$

Note that for any graph $G$, $\textup{TH}_1(\I(SG))$ is the unit cube
in $\RR^{E}$ which may not be equal to $\textup{conv}(SG)$. This
stands in contrast to the case of stable sets for which
$\textup{TH}_1(I_G)$ is a polytope if and only if $\textup{TH}_1(I_G)
= \textup{STAB}(G)$. 

\begin{proposition}
  The ideal $\I(SG)$ is $\textup{TH}_1$-exact if and only if $G$ is a
  bipartite graph.
\end{proposition}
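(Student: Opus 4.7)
The plan is to leverage the explicit description just derived, which shows that $\textup{TH}_1(\I(SG))$ equals the unit cube $[0,1]^E$ for every graph $G$. Indeed, the constraint that the leading $2 \times 2$ principal minor indexed by $\{0,e\}$ is positive semidefinite forces $0 \leq y_e \leq 1$, and conversely any $\yy \in [0,1]^E$ can be certified by the matrix $M = \begin{pmatrix} 1 & \yy^T \\ \yy & Y \end{pmatrix}$ with $Y_{ef} := y_ey_f$ for $e \neq f$ and $Y_{ee} := y_e$, whose Schur complement is the diagonal matrix with entries $y_e(1-y_e) \geq 0$. So the task reduces to deciding when $\conv(SG) = [0,1]^E$.

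For the forward direction, if $G$ is bipartite with bipartition $(V_1,V_2)$ then every edge crosses the partition, so the entire edge set $E$ is itself a cut of $G$. Consequently every $F \subseteq E$ is contained in a cut, $SG = \{0,1\}^E$, and $\conv(SG) = [0,1]^E = \textup{TH}_1(\I(SG))$, giving $\textup{TH}_1$-exactness.

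For the converse, suppose $G$ is not bipartite, so it contains an odd cycle $C$ of length $2k+1$. The key combinatorial input is that any cut of $G$ meets any cycle in an even number of edges (track the $\pm 1$ vertex labels around the cycle). Hence for any $\chi^F \in SG$ with $F$ contained in a cut $K$, one has $|F \cap C| \leq |K \cap C| \leq 2k$, so the linear inequality $\sum_{e \in C} x_e \leq 2k$ is valid on $\conv(SG)$. However, the all-ones vector lies in $\textup{TH}_1(\I(SG)) = [0,1]^E$ and satisfies $\sum_{e \in C} 1 = 2k+1 > 2k$, producing a point in $\textup{TH}_1(\I(SG)) \setminus \conv(SG)$ and defeating exactness.

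I do not foresee any serious obstacle here: the entire argument rests on the cube description of $\textup{TH}_1(\I(SG))$ that has just been established, together with the elementary parity observation linking cuts and cycles. The only small subtlety is verifying the cube description itself (the $\succeq 0$ construction in the first paragraph), which is a short Schur-complement computation.
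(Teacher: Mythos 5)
Your argument is correct and follows exactly the route the paper takes: first establish that $\textup{TH}_1(\I(SG))$ is the unit cube $[0,1]^E$ for every $G$ (which the paper asserts just before the proposition), then observe that $\conv(SG)=[0,1]^E$ if and only if $G$ has no odd cycle, i.e., is bipartite. The paper's proof is a one-liner citing these two facts; you have merely filled in the Schur-complement verification of the cube claim and the cut-meets-cycle-evenly parity argument, both of which are the intended details.
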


\begin{proof} This follows immediately from the description of
  $\textup{TH}_1(\I(SG))$ and from the fact that $G$ is bipartite if
  and only if it has no odd cycles.
\end{proof}

Since the maximum degree of a monomial in $\B$ is the size of the max
cut in $G$, the theta-rank of $\I(SG)$ is bounded from above by the
size of the max cut in $G$. 

\begin{proposition} \label{prop:increasingk} There is no constant $k$
  such that $\I(SG)$ is $\textup{TH}_k$-exact for all graphs $G$.
\end{proposition}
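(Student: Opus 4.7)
The plan is to exhibit an explicit family of graphs whose theta-ranks grow without bound, with odd cycles $G_n := C_{2n+1}$ as the natural candidates. The unique odd cycle of $G_n$ is $G_n$ itself, so $\I(SG_n)$ is generated by $\{x_e^2-x_e : e \in E\}$ together with the single monomial $\prod_{e \in E} x_e$. Since the cuts of a cycle are precisely the even-sized edge subsets, every proper subset $F \subsetneq E$ is contained in some cut, so $SG_n = \{\chi^F : F \subsetneq E\}$ and $\conv(SG_n) = [0,1]^{2n+1} \cap \{\sum_{e \in E} x_e \leq 2n\}$. The all-ones vector $\mathbf{1} \in \RR^E$ satisfies $\sum_e 1 = 2n+1$, so it is not in $\conv(SG_n)$, and I will show it witnesses the theta-rank being at least $n+1$.

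First I would identify the basis $\B$ of $\RR[\xx]/\I(SG_n)$ as $\{\xx^U + \I(SG_n) : U \subsetneq E\}$ with $\deg(\xx^U+\I(SG_n)) = |U|$; for any $k \leq n$, $\B_k = \{\xx^U + \I(SG_n) : |U| \leq k\}$, since $2k \leq 2n < |E|$ means the odd-cycle constraint is vacuous at this truncation level. Then I would apply Corollary~\ref{cor:simple version} together with Proposition~\ref{prop:theta=Q} to the choice $\yy \in \RR^{\B_{2n}}$ defined by $y_V = 1$ for all $V$. For $U, U' \in \B_n$, the identities $x_e^2 \equiv x_e$ give $\xx^U\xx^{U'} \equiv \xx^{U \cup U'} \pmod{\I(SG_n)}$, and $|U \cup U'| \leq 2n < |E|$ forces $\xx^{U\cup U'}$ to be a genuine basis element of $\B_{2n}$, so $M_{\B_n}(\yy)_{U,U'} = y_{U\cup U'} = 1$. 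Thus $M_{\B_n}(\yy) = \mathbf{1}\mathbf{1}^t$ is positive semidefinite, $y_\emptyset = 1$, and $(y_{\{e\}})_{e \in E} = \mathbf{1}$, showing that $\mathbf{1} \in Q_n(\I(SG_n)) = \textup{TH}_n(\I(SG_n))$.

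Combining these two steps, $\I(SG_n)$ is not $\textup{TH}_n$-exact for any $n$, so its theta-rank is at least $n+1$ and no uniform constant $k$ can work for all graphs. There is essentially no technical obstacle along this route: the only delicate point is checking that the generator $\prod_{e \in E} x_e$ does not produce extra reductions inside $\B_{2n}$, which is automatic because every edge-set of size at most $2n$ is already a proper subset of $E$. The conceptual content is simply that an odd cycle of length $2n+1$ cannot be ``felt'' by combinatorial moment matrices supported on edge sets of size at most $n$, paralleling familiar lower bounds for sum-of-squares certificates on odd-cycle constraints.
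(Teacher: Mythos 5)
Your proof is correct and uses the same family of examples as the paper: odd cycles $C_{2k+1}$, whose lone odd-cycle relation (on $|E|=2k+1$ edges) cannot be detected by moment matrices truncated at degree $k$. Where the paper's one-line proof simply asserts that ``the linear constraint imposed by the cycle will not appear,'' you make this precise by exhibiting the all-ones point $\mathbf{1}\notin\conv(SG_n)$ as an explicit member of $\textup{TH}_n(\I(SG_n))$ via the rank-one moment matrix $M_{\B_n}(\yy)=\mathbf{1}\mathbf{1}^t$, which is a welcome addition of rigor to the same argument.
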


\begin{proof}
  Let $G$ be a $(2k+1)$-cycle. Then $\textup{TH}_k(\I(SG)) \neq
  \textup{conv}(SG)$ since the linear constraint imposed by the cycle
  in the definition of $\textup{TH}_k(\I(SG))$ will not appear in
  theta bodies of index $k$ or less. 
\end{proof}


The theta bodies of a second, more standard, formulation of the
weighted max cut problem are studied in \cite{GLPT}. In this setup,
each cut $C$ in $G=([n],E)$ is recorded by its {\em cut vector}
$\chi^C \in \{\pm 1\}^{E}$ with $\chi^C_{\{i,j\}} = 1$ if $\{i,j\}
\not \in C$ and $\chi^C_{\{i,j\}} = -1$ if $\{i,j\} \in C$.  Let $E_n$
denote the edge set of the complete graph $K_n$, and $\pi_E$ be the
projection from $\RR^{E_n}$ to $\RR^E$. The {\bf cut polytope} of $G$
is
$$\textup{CUT}(G) := \textup{conv}\{ \chi^C : C \textrm{ is a cut in }
G\} \subseteq \RR^{E} = \pi_E(\textup{CUT}(K_n)),$$
and the weighted
max cut problem, for weights $w_e \in \RR$ ($\forall$ $e \in E$) becomes
 $$
 \max \left\{\frac{1}{2}\sum_{e \in E} w_{e}(1- x_{e}) : \xx \in
   \textup{CUT}(G)\right\}.$$

In \cite{GLPT}, the vanishing ideal $IG$ of the cut vectors $\{ \chi^C
\,:\, C \textup{ is a cut in } G \}$ is described and a combinatorial
basis $\B$ for $\RR E/ IG$ is identified. Using these, the $k$-th
theta body, $\textup{TH}_k(IG)$, of $IG$ can be described as:
 $$\left\{ \yy \in \RR^E \,:\,
 \begin{array}{l} 
   \exists \, M \succeq 0, \, M \in \RR^{|\B_k| \times
     |\B_k|}\,\,\textup{such that} \\  
   M_{\emptyset,\emptyset} = 1\\
   M_{F_1,F_2} = M_{F_3,F_4} \,\textup{ if } F_1 \Delta F_2 \Delta F_3
   \Delta F_4 \,\textup{ is a cycle in } G
 \end{array}
 \right \}.$$
 These theta bodies provide a new canonical set of SDP relaxations for
 $\textup{CUT}(G)$ that exploits the structure of $G$ directly. It is
 also shown in \cite{GLPT} that $IG$ is $\textup{TH}_1$-exact if and
 only if $G$ has no $K_5$-minor and no induced cycle of length at
 least five which answers Problem~8.4 posed by Lov{\'a}sz in
 \cite{Lovasz}.

\begin{remark} 
  We remark that the stable set problem and the first formulation of
  the max cut problem discussed above are special cases of the
  following general setup.  Let $\Delta$ be an {\em abstract
    simplicial complex} (or {\em independence system}) with vertex set
  $[n]$ recorded as a collection of subsets of $[n]$, called the {\em
    faces} of $\Delta$.  The {\em Stanley-Reisner} ideal of $\Delta$
  is the ideal $J_\Delta$ generated by the squarefree monomials
  $x_{i_1}x_{i_2} \cdots x_{i_k}$ such that $\{i_1, i_2, \ldots, i_k\}
  \subseteq [n]$ is not a face of $\Delta$.  If $I_\Delta := J_\Delta
  + \langle x_i^2 - x_i \,:\, i \in [n] \rangle$, then
  $\V_{\RR}(I_\Delta) = \{ \ss \in \{0,1\}^n \,:\,
  \textup{support}(\ss) \in \Delta \}$.  For $T \subseteq [n]$, recall
  that $\xx^T := \prod_{i \in T} x_i$. Then $\B := \{ \xx^T \,:\, T
  \in \Delta \} + I_{\Delta}$ is a basis for $\RR[\xx]/I_\Delta$
  containing $1+I_\Delta, x_1+I_\Delta, \ldots, x_n+I_\Delta$.
  Therefore, by Corollary~\ref{cor:simple version} and
  Proposition~\ref{prop:theta=Q}, the $k$-th theta body of $I_\Delta$
  is
$$\textup{TH}_k(I_\Delta) = \textup{proj}_{y_1, \ldots, y_n} \{ \yy
\in \RR^{\B_{2k}} \,:\, M_{\B_k}(\yy) \succeq 0, \, y_0 = 1\}.$$
Since
$\B$ is in bijection with the faces of $\Delta$, and $x_i^2 - x_i \in
I_\Delta$ for all $i \in [n]$, the theta body can be written
explicitly as follows:
$$\textup{TH}_k(I_\Delta) = \left\{ \yy \in \RR^n \,:\,
\begin{array}{l} 
\exists \, M \succeq 0, \, M \in \RR^{|\B_k| \times |\B_k|}
  \,\textup{such that} \\ 
M_{\emptyset\emptyset} = 1,\\
M_{\emptyset \{i\}} = M_{\{i\} \emptyset} = M_{\{i\}\{i\}} = y_i \\
M_{UU'} = 0 \,\,\textup{if} \,\,U \cup U' \not \in \Delta\\
M_{UU'} = M_{W W'} \,\,\textup{if} \,\, U \cup U' = W \cup W' 
\end{array}
\right \}.$$
If the dimension of $\Delta$ is $d-1$ (i.e., the largest faces in
$\Delta$ have size $d$), then $I_\Delta$ is $(1,d)$-sos and therefore,
$\textup{TH}_d$-exact since all elements of $\B$ have degree at most
$d$.  However, the theta-rank of $I_\Delta$ could be much less than
$d$.
\end{remark}


\section{Vanishing ideals of finite 
 sets of points} \label{sec:structure}

Recall that when $S \subset \RR^n$ is finite, its vanishing ideal
$\I(S)$ is zero-dimensional and real radical.

\begin{definition} \label{def:exact} We say that a finite set 
  $S \subset \RR^n$ is {\it exact} if its vanishing ideal $\I(S)
  \subseteq \RR[\xx]$ is $\textup{TH}_1$-exact.
\end{definition}

We now answer Lov{\'a}sz's question (Problem~\ref{prob:lovasz}) for
vanishing ideals of finite point sets in $\RR^n$.

\begin{theorem} \label{thm:perfect}
  For a finite set $S \subset \RR^n$, the following are equivalent.
\begin{enumerate} 
\item $S$ is exact.
\item $\I(S)$ is $(1,1)$-sos.
\item There is a linear inequality description of $\textup{conv}(S)$,
  of the form $g_i(x) \geq 0 \ (i=1,\ldots,m),$ where each $g_i$ is
  $1$-sos mod $\I(S)$.
\item There is a linear inequality description of $\textup{conv}(S)$,
  of the form $g_i(x) \geq 0 \ (i=1,\ldots,m),$ where each $g_i$ is an
  idempotent mod $\I(S)$, i.e., $g_i^2-g_i \in \I(S)$ for
  $i=1,\ldots,m$.
\item There is a linear inequality description of $\textup{conv}(S)$,
  of the form $g_i(x) \geq 0 \ (i=1,\ldots,m),$ where each $g_i$ takes
  at most two different values in $S$, i.e., for each $i$, $S$ is
  contained in the union of the hyperplane $g_i(\xx)=0$ and one unique
  parallel translate of it.
\end{enumerate}
\end{theorem}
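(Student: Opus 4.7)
My plan is to establish the equivalences via $(1) \Leftrightarrow (2)$ together with the cycle $(2) \Rightarrow (5) \Leftrightarrow (4) \Rightarrow (3) \Rightarrow (2)$. The substantive content lies in $(2) \Rightarrow (5)$, and the remaining steps are essentially soft.

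The equivalence $(1) \Leftrightarrow (2)$ is immediate from Corollary~\ref{cor:real radical equivalence}, as $\I(S)$ is real radical whenever $S$ is finite. For $(4) \Rightarrow (5)$: if $g$ is idempotent mod $\I(S)$ then $g(s)(g(s)-1) = 0$ for $s \in S$, so $g$ takes values in $\{0,1\}$. For $(5) \Rightarrow (4)$: given a description $\{g_i \geq 0\}$ from (5), rescale each $g_i$ to $g_i/\alpha_i$ where $\alpha_i > 0$ is the maximum value of $g_i$ on $S$ (or leave $g_i$ alone if $g_i \in \I(S)$, in which case it is already idempotent); the rescaling preserves the halfspace and produces a polynomial taking values in $\{0,1\}$ on $S$, hence idempotent mod $\I(S)$. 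For $(4) \Rightarrow (3)$: an idempotent linear $g$ satisfies $g \equiv g^2$ mod $\I(S)$, which is $1$-sos. For $(3) \Rightarrow (2)$: by Farkas's lemma, any linear $f$ non-negative on $S$ (hence on $\textup{conv}(S)$) can be written $f = c_0 + \sum c_i g_i$ with $c_0, c_i \geq 0$, and writing $g_i \equiv \sum_j h_{ij}^2$ mod $\I(S)$ gives $f \equiv (\sqrt{c_0})^2 + \sum_{i,j}(\sqrt{c_i}\,h_{ij})^2$ mod $\I(S)$, a sum of squares of polynomials of degree at most $1$.

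For the heart of the argument, $(2) \Rightarrow (5)$, I would start from a facet description $\{f_i \geq 0\}$ of $\textup{conv}(S)$ together with pairs $\pm \ell_j \geq 0$ for a basis $\{\ell_1,\ldots,\ell_k\}$ of the linear polynomials in $\I(S)$ (needed only when $\textup{conv}(S)$ is not full-dimensional); each $\ell_j$ trivially takes only the value $0$ on $S$. For a facet-defining $f_i$, use that $\I(S)$ is $(1,1)$-sos to write $f_i \equiv \sum_j h_{ij}^2$ mod $\I(S)$ with linear $h_{ij}$. Evaluating at any $s \in S \cap F_i$ forces $h_{ij}(s) = 0$, so $h_{ij}$ vanishes on $S \cap F_i$, which affinely spans $\textup{aff}(F_i)$ because $F_i = \textup{conv}(S \cap F_i)$. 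Thus $h_{ij}$ vanishes on all of $\textup{aff}(F_i)$ and must lie in the linear span of $f_i,\ell_1,\ldots,\ell_k$. Modulo $\I(S)$, all cross terms and pure $\ell$-products vanish, so $h_{ij}^2 \equiv a_{ij}^2 f_i^2$. Summing yields $f_i \equiv C_i f_i^2$ mod $\I(S)$ with $C_i := \sum_j a_{ij}^2$, and $C_i > 0$ since $f_i \notin \I(S)$. The membership $f_i(C_i f_i - 1) \in \I(S)$ then forces $f_i(s) \in \{0, 1/C_i\}$ for every $s \in S$, establishing (5).

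The main obstacle is the structural claim that each $h_{ij}$, modulo $\I(S)$, is a scalar multiple of $f_i$. This rests on two geometric points: that $S \cap F_i$ affinely spans $\textup{aff}(F_i)$ (since the vertices of the polytope $F_i$ lie in $S$), and that the linear polynomials vanishing on $\textup{aff}(F_i)$ form the $(k+1)$-dimensional span of $f_i$ and the $\ell_l$'s. Once this span identification is in hand, the identity $f_i \equiv C_i f_i^2$ mod $\I(S)$ and hence the two-value property that unifies (4) and (5) follow mechanically.
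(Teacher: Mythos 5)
Your proposal is correct and the substantive content — that a facet inequality $g\ge 0$ which is $1$-sos mod $\I(S)$ must have each $h_i$ vanish on $S\cap F$, hence on $\textup{aff}(F)$, hence be a multiple of $g$ modulo the affine hull relations, yielding $g\equiv Cg^2$ — is the same argument the paper uses for its implication $(3)\Rightarrow(4)$. You merely rearrange the cycle ($(2)\Rightarrow(5)$ directly, then $(5)\Leftrightarrow(4)\Rightarrow(3)\Rightarrow(2)$) and handle the non-full-dimensional case more explicitly via the basis $\ell_1,\ldots,\ell_k$ of linear forms in $\I(S)$, which the paper treats in a brief remark; this is a cleaner treatment of that point but not a different approach.
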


\begin{proof}
  Since $\I(S)$ is real radical, by Corollary~\ref{cor:real radical
    equivalence}, (1) $\Leftrightarrow$ (2).
  
  The implication (2) $\Rightarrow$ (3) follows from the fact that
  $\textup{conv}(S)$ has a finite linear inequality description, since
  $S$ is finite. The implication (3) $\Rightarrow$ (2) follows from
  Farkas lemma, which implies that any valid inequality on $S$ is a
  non-negative real combination of the linear inequalities $g_i(x)
  \geq 0$.
  
  Suppose (3) holds and $\textup{conv}(S)$ is a full-dimensional
  polytope.  Let $F$ be a facet of $\textup{conv}(S)$, and $g(\xx)\geq
  0$ its defining inequality in the given description of
  $\textup{conv}(S)$.  Then $g(\xx)$ is $1$-sos mod $\I(S)$ if and
  only if there are linear polynomials $h_1,\ldots,h_l \in \RR[\xx]$
  such that $g \equiv h_1^2 + \cdots + h_l^2 \mod \I(S)$. In
  particular, since $g(\xx)=0$ on the vertices of $F$, and all the
  $h_i^2$ are non-negative, each $h_i$ must be zero on all the
  vertices of $F$.  Hence, since the $h_i$'s are linear, they must
  vanish on the affine span of $F$ which is the hyperplane defined by
  $g(\xx)=0$.  Thus each $h_i$ must be a multiple of $g$ and $g \equiv
  \alpha g^2$ mod $\I(S)$ for some $\alpha > 0$. We may assume that
  $\alpha = 1$ by replacing $g(\xx)$ by $g'(\xx) := \alpha g(\xx)$.
  If $\textup{conv}(S)$ is not full-dimensional, then since mod
  $\I(S)$, all linear polynomials can be assumed to define hyperplanes
  whose normal vectors are parallel to the affine span of $S$, the
  proof still holds. Therefore, (3) implies (4). Conversely, since if
  for a linear polynomial $g$, $g \equiv g^2$ mod $\I(S)$, then $g$ is
  $1$-sos mod $\I(S)$, (4) implies (3).
  
  The equivalence (4) $\Leftrightarrow$ (5) follows since $g \equiv
  g^2$ mod $\I(S)$ if and only if $g(\ss)(1 - g(\ss)) = 0 \,\,\forall
  \,\, \ss \in S$.
\end{proof}

Recall from the discussion at the end of Section 2.1 that by results
of Parrilo, if $I \subseteq \RR[\xx]$ is a zero-dimensional radical
ideal, then the theta-rank of $I$ is at most $|\V_{\CC}(I)|-1$. Better
upper bounds can be derived using the following extension of Parrilo's
theorem.

\begin{remark} \label{rem:k parallel translates} Suppose $S \subseteq
  \RR^n$ is a finite set such that each facet $F$ of
  $\textup{conv}(S)$ has a facet defining inequality $h_F(\xx) \geq 0$
  where $h_F$ takes at most $t+1$ values on $S$, then $\I(S)$ is
  $\textup{TH}_t$-exact: In this case, it is easy to construct a
  degree $t$ intepolator $g$ for the values of $\sqrt{h_F}$ on $S$,
  and we have $h_F \equiv g^2$ mod $\I(S)$. The result then follows
  from Farkas Lemma.
\end{remark}


\begin{remark} \label{rem:oddcycle} The theta-rank of $\I(S)$ could be
  much smaller than the upper bound in Remark~\ref{rem:k parallel
    translates}. Consider a $(2t+1)$-cycle $G$ and the set $S_G$ of
  characteristic vectors of its stable
  sets. Proposition~\ref{prop:2sos} shows that $\I(S_G)$ is
  $\textup{TH}_2$-exact. However, we need $t+1$ translates of the
  facet cut out by $\sum_{i=1}^{2t+1} x_i = t$ to cover $S_G$.
\end{remark}

In the rest of this section we derive various consequences of
Theorem~\ref{thm:perfect}. Finite point sets with property (5) in
Theorem~\ref{thm:perfect} have been studied in various contexts. In
particular, Corollaries \ref{cor:perfect}, \ref{cor:perfectgraph} and
\ref{cor:down-closed perfect} below were also observed independently
by Greg Kuperberg, Raman Sanyal, Axel Werner and G{\"u}nter Ziegler
(personal communication). In their work, $\textup{conv}(S)$ is called
a {\bf 2-level polytope} when property (5) in
Theorem~\ref{thm:perfect} holds.

If $S$ is a finite subset of $\ZZ^n$ and $\mathcal L$ is the smallest
lattice in $\ZZ^n$ containing $S$, then the lattice polytope
$\textup{conv}(S)$ is said to be {\bf compressed} if every {\em
  reverse lexicographic} triangulation of the lattice points in
$\textup{conv}(S)$ is {\em unimodular} with respect to $\mathcal L$.
Compressed polytopes were introduced by Stanley
\cite{StanleyCompressed}.  Corollary~\ref{cor:perfect} (4) and Theorem
2.4 in \cite{Sullivant} (see also the references after Theorem 2.4 in
\cite{Sullivant} for earlier citations of part or unpublished versions
of this result), imply that a finite set $S \subset \RR^n$ is exact if
and only if $\textup{conv}(S)$ is affinely equivalent to a compressed
polytope.
  
\begin{corollary} \label{cor:perfect} Let $S,S' \subset \RR^n$ be
  exact sets. Then
\begin{enumerate}
\item all points of $S$ are vertices of $\textup{conv}(S)$,
\item the set of vertices of any face of $\textup{conv}(S)$ is again
  exact, 
\item the product $S \times S'$ is exact, and
\item $\textup{conv}(S)$ is affinely equivalent to a $0/1$ polytope.
\end{enumerate}
\end{corollary}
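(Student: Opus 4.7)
My plan is to work with characterization (5) of Theorem~\ref{thm:perfect}: write
$\textup{conv}(S) = \{\xx \in \RR^n \,:\, g_i(\xx) \geq 0, \, i=1,\ldots,m\}$
where each $g_i$ takes only two values on $S$, namely $0$ and some $c_i > 0$. I would establish (4) first by an explicit affine embedding, deduce (1) from (4), and then handle (2) and (3) by re-verifying property (5) on the relevant subsets.

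For (4), I would consider the affine map $\phi \,:\, \RR^n \to \RR^m$ given by $\phi(\xx) := (g_1(\xx)/c_1, \ldots, g_m(\xx)/c_m)$, which by (5) satisfies $\phi(S) \subseteq \{0,1\}^m$. The main step is to argue that $\phi$ restricts to an affine injection on the affine hull of $\textup{conv}(S)$: if $\phi(\xx) = \phi(\yy)$, then the linear parts of all the $g_i$ annihilate $\xx - \yy$, but these linear parts must span the dual of the affine hull of $\textup{conv}(S)$, since otherwise $\textup{conv}(S)$ would be unbounded in some direction within its affine hull, contradicting the finiteness of $S$. This forces $\xx = \yy$, so $\phi$ identifies $\textup{conv}(S)$ affinely with $\textup{conv}(\phi(S)) \subseteq [0,1]^m$, a $0/1$-polytope, proving (4). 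For (1), every point of $\phi(S) \subseteq \{0,1\}^m$ is extreme in $[0,1]^m$ and hence extreme in any subpolytope of $[0,1]^m$ that contains it; in particular, each element of $\phi(S)$ is a vertex of $\textup{conv}(\phi(S))$. Pulling this back along the affine equivalence $\phi$ shows that each point of $S$ is a vertex of $\textup{conv}(S)$.

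For (2), any face $F$ of $\textup{conv}(S)$ has the form $F = \{\xx \in \textup{conv}(S) \,:\, g_i(\xx) = 0 \,\,\forall\,\, i \in J\}$ for some $J \subseteq \{1,\ldots,m\}$, and by (1) its vertex set is $V(F) = \{\ss \in S \,:\, g_i(\ss) = 0 \,\,\forall\,\, i \in J\}$. Then $F = \textup{conv}(V(F))$ is cut out by the inequalities $g_i \geq 0$ for $i \notin J$ together with the pairs $\pm g_i \geq 0$ for $i \in J$ (encoding the equalities); on $V(F)$ each $g_i$ with $i \notin J$ still takes at most two values (as a restriction of a two-valued function on $S$), while $\pm g_i$ for $i \in J$ vanishes identically, so property (5) again holds. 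For (3), we have $\textup{conv}(S \times S') = \textup{conv}(S) \times \textup{conv}(S')$, which is cut out by the facet inequalities of the two factors viewed as polynomials on $\RR^{n+m}$ that depend only on the appropriate block of variables; each such polynomial takes the same two values on $S \times S'$ as it does on $S$ or $S'$, so (5) is inherited. The main subtlety in the whole plan is justifying the injectivity of $\phi$ on $\textup{conv}(S)$ in the proof of (4); once that is in hand, the remaining parts reduce to essentially bookkeeping on faces and products.
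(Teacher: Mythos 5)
Your proposal is correct and follows essentially the same strategy as the paper: reduce everything to characterization~(5) of Theorem~\ref{thm:perfect}. The one substantive variation is in part (4): you map $S$ into $\{0,1\}^m$ via \emph{all} $m$ inequalities and prove injectivity on the affine hull via a boundedness argument, whereas the paper selects $d = \dim\textup{conv}(S)$ non-parallel facet pairs, observes that the corresponding pairs of parallel hyperplanes bound a $d$-dimensional parallelepiped, and concludes $S$ lies in its vertex set — a slightly more economical route to the same affine equivalence with a subset of $\{0,1\}^d$. Your injectivity argument is sound (if the linear parts of the $g_i$ failed to span the dual of the affine hull, $\textup{conv}(S)$ would contain a line, contradicting finiteness of $S$), and your treatments of (1), (2), and (3) are exactly the verifications of property~(5) that the paper's terse ``follows from Theorem~\ref{thm:perfect}~(5)'' leaves implicit.
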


\begin{proof} 
  The first three properties follow from Theorem~\ref{thm:perfect}
  (5).  If the dimension of $\textup{conv}(S)$ is $d \,\,(\leq n)$,
  then $\textup{conv}(S)$ has at least $d$ non-parallel facets. If
  $\aa \cdot \xx \geq b$ cuts out a facet in this collection, then
  $\textup{conv}(S)$ is supported by both $\{ \xx \in \RR^n \,:\, \aa
  \cdot \xx = b\}$ and a parallel translate of it. Taking these two
  parallel hyperplanes from each of the $d$ facets gives a
  parallelepiped. By Theorem~\ref{thm:perfect}, $S$ is contained in
  the vertices of this parallelepiped intersected with the affine hull
  of $S$. This proves (4).
\end{proof}

By Corollary~\ref{cor:perfect} (4), it essentially suffices to look at
subsets of $\{0,1\}^n$ to obtain all exact finite varieties in
$\RR^n$.  In $\RR^2$, the set of vertices of any $0/1$-polytope verify
this property.  In $\RR^3$ there are eight full-dimensional
$0/1$-polytopes up to affine equivalence. In Figures~\ref{fig:perfect
  in r3} and \ref{fig:non-perfect in r3} the convex hulls of the exact
and non-exact $0/1$ configurations in $\RR^3$ are shown.

\begin{figure}
\includegraphics[scale=0.35]{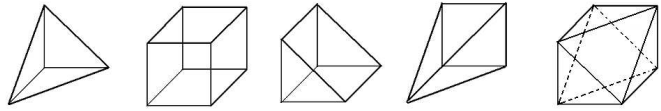}
\caption{Convex hulls of exact $0/1$ point sets in $\RR^3$.}
\label{fig:perfect in r3}
\end{figure}
       
\begin{figure}
\includegraphics[scale=0.35]{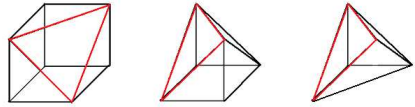}
\caption{Convex hulls of non-exact $0/1$ point sets in $\RR^3$.}
\label{fig:non-perfect in r3} 
\end{figure}

\begin{example} \label{ex:perfect}
  The vertices of the following $0/1$-polytopes in $\RR^n$ are exact
  for every $n$: (1) hypercubes, (2) (regular) cross polytopes, (3)
  hypersimplices (includes simplices), (4) joins of $2$-level
  polytopes, and (5) stable set polytopes of perfect graphs on $n$
  vertices.
\end{example}



\begin{theorem} \label{thm:2^n facets}
  If $S$ is a finite exact point set then $\textup{conv}(S)$ has at
  most $2^d$ facets and vertices, where $d=\dim \textup{conv}(S)$.
  Both bounds are sharp.
\end{theorem}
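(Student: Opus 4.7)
My plan handles the two bounds separately, using Theorem~\ref{thm:perfect}(5) as the common starting point.

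For the vertex bound I will reuse the construction in the proof of Corollary~\ref{cor:perfect}(4). By Theorem~\ref{thm:perfect}(5), $\textup{conv}(S)$ has a facet description $g_i(\xx)\geq 0$ in which each $g_i$ takes at most two values on $S$; after rescaling those values are $\{0,1\}$. Since $\dim\textup{conv}(S)=d$, I can pick $d$ of the $g_i$'s, say $g_1,\ldots,g_d$, whose linear parts are linearly independent in the dual of the tangent space of $\textup{aff}(S)$. The affine map $\phi:\textup{aff}(S)\to\RR^d$ sending $\ss\mapsto(g_1(\ss),\ldots,g_d(\ss))$ is then an isomorphism, and its restriction to $S$ is an injection into $\{0,1\}^d$, so $|S|\leq 2^d$. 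Equivalently, $S$ lies among the $2^d$ vertices of the parallelepiped cut out by the $d$ pairs of parallel supporting hyperplanes produced by Theorem~\ref{thm:perfect}(5).

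For the facet bound I will use polar duality. Choose $c\in\textup{int}(\textup{conv}(S))$, translate so that $c$ is the origin, and form the polar $P^\circ=(\textup{conv}(S)-c)^\circ$; since vertices of $P^\circ$ correspond bijectively with facets of $\textup{conv}(S)$, it suffices to show that $P^\circ$ is itself exact and apply the vertex bound to $V(P^\circ)$. A direct computation shows that for each facet $F$ with $0/1$-valued $g_F$, the corresponding polar vertex $v_F^\ast$ satisfies $\langle v_F^\ast,v-c\rangle=1-g_F(v)/g_F(c)$ for $v\in S$, so the defining function of the facet of $P^\circ$ indexed by $v$ takes only the values $0$ and $1/g_F(c)$ on the polar vertex set. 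Provided $c$ is chosen so that $g_F(c)$ is independent of $F$---for instance, when $\textup{conv}(S)$ is centrally symmetric and $c$ is the center, or when every facet has the same number of vertices and $c$ is the centroid of $S$---then $P^\circ$ is in the form of Theorem~\ref{thm:perfect}(5), hence exact, and the vertex bound yields $|F(\textup{conv}(S))|=|V(P^\circ)|\leq 2^d$. The main obstacle is the general case, where no interior point simultaneously equalizes the $g_F(c)$ across all facets; I would address this either by a symmetrization reduction to the centrally symmetric case or by a direct enumeration of the $0/1$-valued affine functions on $S$ afforded by Theorem~\ref{thm:perfect}(5), using the pairing $h\leftrightarrow 1-h$ together with the $d$-dimensionality of $\textup{aff}(S)$.

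Sharpness of both bounds is exhibited by the examples in Example~\ref{ex:perfect}: the hypercube $[0,1]^d$ is exact with $|V|=2^d$ and $|F|=2d$, showing the vertex bound is tight, while the $d$-dimensional cross polytope---exact after the affine equivalence of Corollary~\ref{cor:perfect}(4)---has $|V|=2d$ and $|F|=2^d$ (one facet per orthant), showing the facet bound is tight.
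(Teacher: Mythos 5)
Your vertex bound and sharpness examples are correct and match the paper (the vertex bound is immediate from Corollary~\ref{cor:perfect}(4), attained by $[0,1]^d$; the facet bound is attained by the cross-polytope). The facet bound via polarity, however, has a genuine gap that your proposed fixes do not close. The class of $2$-level polytopes is \emph{not} closed under polarity, and there is in general no interior point $c$ equalizing the $g_F(c)$. Concretely, let $P=\textup{conv}\{0,e_1,e_2\}\times[0,1]\subset\RR^3$, the prism over a triangle, which is exact by Corollary~\ref{cor:perfect}(3). Its five facet functionals normalized to take values $\{0,1\}$ on the vertices are $x_1,\ x_2,\ 1-x_1-x_2,\ x_3,\ 1-x_3$; the first three force $c_1=c_2=\tfrac13$ while the last two force $c_3=\tfrac12$, so no $c$ makes all $g_F(c)$ equal, and a direct check shows the polar (a bipyramid over a triangle) is not $2$-level. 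Symmetrization changes the combinatorial type, so it cannot rescue the count. Your other fallback — enumerating the $\{0,1\}$-valued affine functions $h$ on $S$ up to the pairing $h\leftrightarrow 1-h$ — does bound the number of such pairs by $2^d-1$ (an affine function is determined by its $\{0,1\}$-values on an affine basis of $\textup{aff}(S)$, discard the two constants, divide by two), but since a pair can contribute \emph{two} parallel facets this yields only $2^{d+1}-2$ facets, not $2^d$.

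The paper's proof instead runs an induction on $d$ with the notion of a \emph{face pair}: an unordered pair $(F_1,F_2)$ of proper faces in parallel hyperplanes with $S\subseteq F_1\cup F_2$. It proves simultaneously that $P$ has at most $2^d-1$ face pairs and at most $2^d$ facets: one fixes a facet $F$ with partner $F'$, places $F$ in $\{x_d=0\}$ and $F'$ in $\{x_d=1\}$, applies the inductive hypothesis to $F$, and shows that each face pair of $F$ is induced by at most two face pairs of $P$, and among those one accounts for at most two facets of $P$. This joint bookkeeping — trading ``two face pairs each with one facet'' against ``one face pair with two facets'' — is exactly what you would need to tighten $2^{d+1}-2$ to $2^d$, and it is not recoverable from the polar-duality or naive-counting routes.
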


\begin{proof}
  The bound on the number of vertices is immediate by
  Corollary~\ref{cor:perfect} (4) and is achieved by $[0,1]^d$.
  
  For a polytope $P$ with an exact vertex set $S$, define a {\bf face
    pair} to be an unordered pair $(F_1,F_2)$ of proper faces of $P$
  such that $S \subseteq F_1 \cup F_2$ and $F_1$ and $F_2$ lie in
  parallel hyperplanes, or equivalently, there exists a linear form
  $h_{F_1,F_2}(\xx)$ such that $h_{F_1,F_2}(F_1)=0$ and
  $h_{F_1,F_2}(F_2)=1$. We will show that if $\textup{dim} \,\,P = d$
  then $P$ has at most $2^d-1$ face pairs and $2^d$ facets.
  
  If $d=1$, then an exact $S$ consists of two distinct points and $P$
  has two facets and one face pair as desired. Assume the result holds
  for $(d-1)$-polytopes with exact vertex sets and consider a
  $d$-polytope $P$ with exact vertex set $S$. Let $F$ be a facet of
  $P$ which by Theorem \ref{thm:perfect}, is in a face pair $(F,F')$
  of $P$.  Since exactness does not depend on the affine embedding, we
  may assume that $P$ is full-dimensional and that $F$ spans the
  hyperplane $\{\xx \,:\, x_{d}=0 \}$, while $F'$ lies in $\{\xx \,:\,
  x_{d}=1\}$. By Corollary \ref{cor:perfect}, $F$ satisfies the
  induction hypothesis and so has at most $(2^{d-1}-1)$ face pairs.
  Any face pair of $P$ besides $(F,F')$ induces a face pair of $F$ by
  intersection with $F$, and every facet of $P$ is in a face pair of
  $P$ since $S$ is exact.  The plan is to count how many face pairs of
  $P$ induce the same face pair of $F$ and the number of facets they
  contain.
  
  Fix a face pair $(F_1,F_2)$ of $F$, with associated linear form
  $h_{F_1,F_2}$ depending only on $x_1,\ldots,x_{d-1}$. Suppose
  $(F_1,F_2)$ is induced by a face pair of $P$ with associated linear
  form $H(\xx)$. Since $H$ and $h_{F_1,F_2}$ agree on every vertex of
  $F$, a facet of $P$, $H(\xx)=h_{F_1,F_2}(x_1,\ldots,x_{d-1})+cx_{d}$
  for some constant $c$.
  
  If $h_{F_1,F_2}(x_1,\ldots,x_{d-1})$ takes the same value $v$ on all
  of $F'$, then $H(F')=v+c = 0 \,\textup{or} \,1$ which implies that
  $c=-v$ or $c=1-v$. The two possibilities lead to the face pairs
  $(\textup{conv}(F_1 \cup F'), F_2)$ and $(\textup{conv}(F_2 \cup
  F'), F_1)$ of $P$.  Each such pair contains at most one facet of
  $P$.
  
  If $h_{F_1,F_2}(x_1,\ldots,x_{d-1})$ takes more than one value on the
  vertices of $F'$, then these values must be $v$ and $v+1$ for some
  $v$ since $H$ takes values $0$ and $1$ on the vertices of $F'$.  In
  that case, $c=-v$, so $H$ is unique and we get at most one face pair
  of $P$ inducing $(F_1,F_2)$. This pair will contain at most two
  facets of $P$.

  Since there are at most $2^{d-1}-1$ face pairs in $F$, they give us
  at most $2(2^{d-1}-1)$ face pairs and facets of $P$. Since we have
  not counted $(F,F')$ as a face pair of $P$, and $F$ and $F'$ as
  possible facets of $P$, we get the desired result.  The bound on the
  number of facets is attained by cross-polytopes.
\end{proof}

\begin{remark}
  G{\"u}nter Ziegler has pointed out that our proof of
  Theorem~\ref{thm:2^n facets} can be refined to yield that $P$ (as
  used above) has $2^d-1$ face pairs if and only if it is a simplex
  and $2^d$ facets if and only if it is a regular cross-polytope.
\end{remark}

Recall that Problem~\ref{prob:lovasz} was inspired by perfect graphs.
Theorem~\ref{thm:perfect} adds to the characterizations of a perfect
graph (c.f. Theorem~\ref{thm:knownforperfectgraphs}) as follows.

\begin{corollary} \label{cor:perfectgraph}
  For a graph $G$, let $S_G$ denote the set of characteristic vectors
  of stable sets in $G$. Then the following are equivalent.
  \begin{enumerate}
  \item The graph $G$ is perfect.
  \item The stable set polytope, $\textup{STAB}(G)$, is a $2$-level
    polytope.
  \end{enumerate}
\end{corollary}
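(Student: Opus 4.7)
The plan is to obtain this as an immediate corollary by composing two previous equivalences: the characterization of perfect graphs via the $(1,1)$-sos property of $I_G$ from Theorem~\ref{thm:knownforperfectgraphs}, and the characterization of $(1,1)$-sos vanishing ideals via the 2-level property from Theorem~\ref{thm:perfect}. The only bridge I need is to identify $\I(S_G)$ with $I_G$.

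First I would check that $\I(S_G) = I_G$. By construction $S_G = \V_\RR(I_G)$, and $I_G$ is real radical (this was noted in Section~\ref{subsec:stable sets}). By the Real Nullstellensatz, $\I(\V_\RR(I_G)) = \sqrt[\RR]{I_G} = I_G$, so indeed $\I(S_G) = I_G$.

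Next I would chain the equivalences. By Theorem~\ref{thm:knownforperfectgraphs}, $G$ is perfect if and only if $I_G$ is $(1,1)$-sos. Applying Theorem~\ref{thm:perfect} to the finite point set $S_G$ with $\I(S_G) = I_G$, the ideal $I_G$ is $(1,1)$-sos (condition (2) of Theorem~\ref{thm:perfect}) if and only if $\textup{conv}(S_G) = \textup{STAB}(G)$ is a 2-level polytope (condition (5) of Theorem~\ref{thm:perfect}, which is precisely the definition of 2-level given in the paragraph preceding the corollary). Combining, $G$ is perfect if and only if $\textup{STAB}(G)$ is a 2-level polytope.

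There is no real obstacle here; the work was done in Theorems~\ref{thm:knownforperfectgraphs} and~\ref{thm:perfect}. The only subtlety is making sure $\I(S_G)$ really equals the explicitly presented ideal $I_G$ rather than some larger ideal, which is guaranteed by the real radicality of $I_G$.
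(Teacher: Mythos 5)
Your proof is correct and follows exactly the route the paper intends (the paper gives no explicit proof, simply pointing to Theorems~\ref{thm:knownforperfectgraphs} and~\ref{thm:perfect}). You also correctly flag and resolve the one implicit step, namely that $\I(S_G) = I_G$ via real radicality and the Real Nullstellensatz.
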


A polytope $P$ in $\RR^n_{\geq 0}$ is said to be {\bf down-closed} if
 for all $\vv \in P$ and  $\vv' \in \RR^n_{\geq 0}$ 
such that $v'_i \leq v_i$ for $i=1,\ldots,n$, $\vv' \in P$. For a
graph $G$, $\textup{STAB}(G)$ is a down-closed $0/1$-polytope, and $G$
is perfect if and only if the vertex set of $\textup{STAB}(G)$ is
exact. We now prove that all down-closed $0/1$-polytopes with exact
vertex sets are stable set polytopes of perfect graphs.

\begin{theorem} \label{thm:down-closed perfect}
  Let $P \subseteq \RR^n$ be a down-closed $0/1$-polytope and $S$ be
  its set of vertices. Then $S$ is exact if and only if all facets of
  $P$ are either defined by non-negativity constraints on the
  variables or by an inequality of the form $\sum_{i \in I} x_i \leq
  1$ for some $I \subseteq [n]$.
\end{theorem}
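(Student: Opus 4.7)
The plan is to invoke Theorem~\ref{thm:perfect}, specifically the equivalence $(1)\Leftrightarrow(5)$: $S$ is exact iff $\conv(S)$ admits a facet description in which each defining linear form takes at most two values on $S$. Moreover, as seen in the proof of that theorem, each such form can be positively rescaled so that its two values on $S$ are exactly $\{0,1\}$.

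The reverse direction is direct. The linear form $x_i$ trivially takes values $\{0,1\}$ on the $0/1$-set $S$. For an inequality $1 - \sum_{i \in I} x_i \geq 0$ that is valid on $S$, the sum $\sum_{i \in I} x_i$ is a non-negative integer bounded above by $1$ on $S$, hence lies in $\{0,1\}$, and so does $1 - \sum_{i \in I} x_i$. Theorem~\ref{thm:perfect}(5) then yields exactness.

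For the forward direction, assume $S$ is exact. I would first reduce to $P$ full-dimensional by restricting to the coordinates on which $P$ has positive extent; the omitted coordinates contribute facets $x_j \geq 0$ as implicit equalities. In the full-dimensional case, down-closedness combined with the existence, for each $j$, of a vertex of $P$ with $j$-th coordinate equal to $1$ forces $\ee_j \in P$; extremality in $\RR^n_{\geq 0}$ then makes $\ee_j$ a vertex, and likewise $\mathbf{0}$ is a vertex. Now consider any facet $F$ not of the form $x_i \geq 0$, with defining inequality $\aa \cdot \xx \leq b$. Down-closedness forces $\aa \geq 0$ (a negative coefficient $a_j$ would allow us to decrease the $j$-th coordinate of a vertex on $F$, remaining in $P$ but violating the inequality) and $b > 0$. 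By exactness and Theorem~\ref{thm:perfect}(5), after a positive rescaling we may arrange $b - \aa \cdot \xx \in \{0,1\}$ on $S$. Evaluating at $\mathbf{0} \in S$ gives $b = 1$; evaluating at each $\ee_j \in S$ gives $a_j \in \{0,1\}$. Setting $I := \{j : a_j = 1\}$ then presents the facet as $\sum_{j \in I} x_j \leq 1$.

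The main obstacle is only mild convex-geometry bookkeeping: justifying $\aa \geq 0$ from down-closedness, identifying $\mathbf{0}$ and each $\ee_j$ as actual vertices of $P$ so that the evaluations are meaningful, and rescaling the two-valued facet form to take values exactly in $\{0,1\}$ (possible because one value is the zero attained on $F$ and the other is some strictly positive constant attained off $F$).
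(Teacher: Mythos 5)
Your proof is correct and follows essentially the same route as the paper: reduce to the full-dimensional case, observe that down-closedness puts $\mathbf{0}$ and each $\ee_j$ in $S$, and then use the two-valued characterization from Theorem~\ref{thm:perfect}(5) evaluated at these special points to pin down the facet coefficients. The extra step you include (deducing $\aa \geq 0$ directly from down-closedness) is actually redundant, since once $g$ is normalized to take values in $\{0,1\}$ on $S$, the evaluations $g(\ee_j)=1-a_j\in\{0,1\}$ already force $a_j\in\{0,1\}$; the paper omits it for this reason.
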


\begin{proof} If $P$ is not full-dimensional then since it is
  down-closed, it must be contained in a coordinate hyperplane $x_i =
  0$ and the arguments below can be repeated in this lower-dimensional
  space. So we may assume that $P$ is $n$-dimensional. Then since $P$
  is down-closed, $S$ contains $\{{\bf 0},\ee_1, \ldots, \ee_n\}$.
  
  If all facets of $P$ are of the stated form, using that $S \subseteq
  \{0,1\}^n$, it is straight forward to check that $S$ is exact.
  
  Now assume that $S$ is exact and $g(\xx) \geq 0$ is a facet
  inequality of $P$ that is not a non-negativity constraint. Then
  $g(\xx) := c - \sum_{i=1}^{n} a_i x_i \geq 0$ for some integers $c,
  a_1, \ldots, a_n$ with $c \neq 0$. Since ${\bf 0} \in S$ and $S$ is
  exact, we get that $g(\ss)$ equals $0$ or $c$ for all $\ss \in S$.
  Therefore, for all $i$, $g(\ee_i) = c-a_i$ equals $0$ or $c$, so 
  $a_i$ is either $0$ or $c$. Dividing through by $c$, we get that the
  facet inequality $g(\xx) \geq 0$ is of the form $\sum_{i \in I} x_i
  \leq 1$ for some $I \subseteq [n]$.
\end{proof}

\begin{corollary} \label{cor:down-closed perfect}
  Let $P \subseteq  \RR^n$ be a full-dimensional down-closed
  $0/1$-polytope and $S$ be its vertex set. Then $S$ is exact if and
  only if $P$ is the stable set polytope of a perfect graph.
\end{corollary}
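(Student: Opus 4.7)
The plan is to reduce both directions to Theorem~\ref{thm:down-closed perfect} (facet characterization of down-closed $0/1$-polytopes with exact vertex sets) combined with Corollary~\ref{cor:perfectgraph} (equivalence of perfectness of $G$ and $2$-levelness of $\textup{STAB}(G)$).

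For the forward direction, suppose $P = \textup{STAB}(G)$ for a perfect graph $G$. Then $S$ is the set of characteristic vectors of stable sets in $G$, so by Corollary~\ref{cor:perfectgraph}, $P$ is a $2$-level polytope, which means precisely that $S$ is exact.

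For the reverse direction, assume $S$ is exact. Since $P$ is full-dimensional and down-closed, $P \cap \{0,1\}^n = S$ and $S$ contains $\mathbf{0}$ and $\ee_1,\ldots,\ee_n$. Define $G=([n],E)$ by declaring $\{i,j\} \in E$ if and only if $\ee_i + \ee_j \notin S$ (equivalently, $\ee_i+\ee_j \notin P$). I would then show that $P = \textup{STAB}(G)$ by verifying two inclusions. The inclusion $S \subseteq S_G$ (characteristic vectors of stable sets) is immediate from down-closedness: any two coordinates both set to $1$ in some $\chi^U \in S$ force $\ee_i+\ee_j \in P$ and hence $\{i,j\} \notin E$. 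For the reverse inclusion, I would apply Theorem~\ref{thm:down-closed perfect} to obtain that every facet of $P$ is either a non-negativity constraint $x_i \geq 0$ or of the form $\sum_{i \in I} x_i \leq 1$ for some $I \subseteq [n]$. For each such $I$, any two distinct $i,j \in I$ satisfy $(\ee_i+\ee_j)(I) = 2 > 1$, so $\ee_i + \ee_j \notin P$, meaning $\{i,j\} \in E$; hence $I$ is a clique in $G$. Then for any stable set $U$ of $G$, $|U \cap I| \leq 1$, so $\chi^U$ satisfies every facet inequality, giving $\chi^U \in P \cap \{0,1\}^n = S$.

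Having established $P = \textup{STAB}(G)$, the assumption that $S$ is exact together with Corollary~\ref{cor:perfectgraph} immediately yields that $G$ is perfect, completing the proof.

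The main obstacle is the middle step: cleanly deducing $P = \textup{STAB}(G)$ from the facet structure. The key insight making this routine is that Theorem~\ref{thm:down-closed perfect} forces the only nontrivial facets to be of the clique form $\sum_{i \in I} x_i \leq 1$, which is exactly the shape of the clique inequalities valid on $\textup{STAB}(G)$; without this facet characterization the constructed $G$ need not satisfy $P = \textup{STAB}(G)$.
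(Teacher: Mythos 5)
Your proof is correct and follows essentially the same route as the paper: reduce to Theorem~\ref{thm:down-closed perfect} for the facet characterization, build the graph $G$ whose cliques are the facet index sets, show $P = \textup{STAB}(G)$, and conclude perfection via Corollary~\ref{cor:perfectgraph}. The only cosmetic differences are that you define $E$ intrinsically by $\ee_i+\ee_j \notin P$ (equivalent to the paper's facet-based definition), you establish $P \subseteq \textup{STAB}(G)$ by down-closedness rather than by the paper's ``three values contradicts exactness'' argument, and you finish by reapplying Corollary~\ref{cor:perfectgraph} where the paper instead invokes the polyhedral characterization of perfection from Gr\"otschel--Lov\'asz--Schrijver.
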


\begin{proof}
  By Corollary~\ref{cor:perfectgraph} we only need to prove the
  ``only-if'' direction. Suppose $S$ is exact. Then by
  Theorem~\ref{thm:down-closed perfect}, all facet inequalities of $P$
  are either of the form $x_i \geq 0$ for some $i \in [n]$ or $\sum_{i
    \in I} x_i \leq 1$ for some $I \subseteq [n]$. Define the graph
  $G=([n],E)$ where $\{i,j\} \in E$ if and only if $\{i,j\} \subseteq
  I$ for some $I$ that indexes a facet inequality of $P$.
  
  We prove that $P = \textup{STAB}(G)$ and that $G$ is perfect. Let $K
  \subseteq [n]$ such that its characteristic vector $\chi^K \in S$.
  If there exists $i,j \in K$ such that $i,j \in I$ for some $I$ that
  indexes a facet inequality of $P$, then $1-\sum_{i \in I} x_i$ takes
  three different values when evaluated at the points ${\bf 0}, \ee_i,
  \chi^K$ in $S$ which contradicts that $S$ is exact. 
  Therefore, $K$
  is a stable set of $G$ and $P \subseteq \textup{STAB}(G)$. If $K
  \subseteq [n]$ is a stable set of $G$ then, by construction, for
  every $I$ indexing a facet inequality of $P$, $\chi^K$ lies on
  either $\sum_{i \in I} x_i = 1$ or $\sum_{i \in I} x_i = 0$.
  Therefore $\chi^K \in P$ and $\textup{STAB}(G) \subseteq P$. Since
  all facet inequalities of $\textup{STAB}(G)$ are either
  non-negativities or clique inequalities, $G$ is perfect by
  \cite[Theorem~9.2.4 iii.]{GLS}.
\end{proof}


\section{Arbitrary $\textup{TH}_1$-exact Ideals} \label{sec:structure  
  arbitrary S} 

In this last section we describe $\textup{TH}_1(I)$ for an arbitrary
(not necessarily real radical or zero-dimensional) ideal $I \subseteq
\RR[\xx]$. The main structural result is
Theorem~\ref{thm:intersection} which allows the construction of
non-trivial high-dimensional $\textup{TH}_1$-exact ideals as in
Example~\ref{ex:infinite perfect set}.

In this study, the {\em convex quadrics} in $\RR[\xx]$ play a
particularly important role. These are precisely the polynomials of
degree two that can be written as $F(\xx)=\xx^t A \xx + \bb^t \xx +
c$, where $A \not = 0$ is an $n \times n$ positive semidefinite matrix, $\bb
\in \RR^n$ and $c \in \RR$.  Note that every sum of squares of linear
polynomials in $\RR[\xx]$ is a convex quadric.

\begin{lemma}\label{lem:contained}
  For $I \subseteq \RR[\xx]$, $\textup{TH}_1(I) \neq \RR^n$ if and
  only if there exists some convex quadric $F \in I$.
\end{lemma}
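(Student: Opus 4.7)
My plan is to prove both directions by the same mechanism: passing between convex quadrics in $I$ and linear polynomials that are $1$-sos mod $I$. The key fact is that a degree-$2$ polynomial is a convex quadric precisely when its quadratic part $\xx^t A \xx$ satisfies $A \succeq 0$, $A \neq 0$, and that a sum of squares of linear polynomials is exactly such an object (possibly with $A = 0$ in degenerate cases). So the two sides of the equivalence are very close, modulo the delicate point of when a PSD quadratic part can fail to be nonzero.

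For the ($\Leftarrow$) direction, suppose $F(\xx) = \xx^t A \xx + \bb^t \xx + c \in I$ with $A \succeq 0$, $A \neq 0$. I would decompose $A = \sum_{i=1}^r \aa_i \aa_i^t$ with each $\aa_i \neq 0$, so that $\xx^t A \xx = \sum_i (\aa_i^t \xx)^2$. For any parameter vector $\vv \in \RR^r$, expand $\sum_i (\aa_i^t \xx - v_i)^2 = \xx^t A \xx - 2(\sum_i v_i \aa_i)^t \xx + \|\vv\|^2$. Subtracting $F$ (which is $\equiv 0$ mod $I$) gives that the linear polynomial $\ell_\vv(\xx) := -\bb^t\xx - 2(\sum_i v_i \aa_i)^t \xx - c + \|\vv\|^2$ is $1$-sos mod $I$. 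Taking $\vv = 0$ handles the case $\bb \neq 0$; otherwise I take $\vv = \ee_i$ for an index with $\aa_i \neq 0$, which forces the linear part of $\ell_\vv$ to be nonzero. Either way, $\textup{TH}_1(I)$ is contained in a proper halfspace, hence not all of $\RR^n$.

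For the ($\Rightarrow$) direction, if $\textup{TH}_1(I) \neq \RR^n$, pick by Definition~\ref{def:theta} a linear $f$ that is $1$-sos mod $I$ with $f(\ss) < 0$ for some $\ss$. Then $F := \sum_j h_j^2 - f \in I$ for linear $h_j = \aa_j^t \xx + b_j$, and $F$ has quadratic part $\xx^t A \xx$ with $A = \sum_j \aa_j \aa_j^t \succeq 0$. The easy case is $A \neq 0$: then $F$ itself is a convex quadric in $I$. The main obstacle, and the step requiring care, is the degenerate case $A = 0$, where all $h_j$ are constants $b_j$, so $f \equiv c := \sum_j b_j^2 \geq 0$ mod $I$. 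Writing $f = a_0 + \aa^t \xx$, I split into subcases: if $\aa \neq 0$, then $(f-c)^2 \in I$ has quadratic part $\xx^t \aa\aa^t \xx$ which is PSD nonzero, so $(f-c)^2$ is the desired convex quadric; if $\aa = 0$, then $f = a_0 < 0 \leq c$, so $a_0 - c \neq 0$ is a nonzero constant in $I$, forcing $I = \RR[\xx]$, which contains for instance $\|\xx\|^2$.

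The only step that is not a direct computation is the degenerate subcase $A = 0$ in the forward direction, and the technical lever needed there is squaring the linear polynomial $f - c$ to convert a linear element of $I$ into a convex quadric in $I$, together with the fallback that a linear constant in $I$ collapses $I$ to the whole ring.
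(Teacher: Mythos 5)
Your proof is correct, and both directions are carried out in the same spirit as the paper's, but with an important difference of care in the forward direction.

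For the $(\Leftarrow)$ direction, your argument and the paper's are essentially identical. The paper writes the shifted square as $(\xx+\dd)^tA(\xx+\dd)$ for $\dd \in \RR^n$; you decompose $A=\sum_i \aa_i\aa_i^t$ and shift each linear factor by $v_i$. These are the same computation under the substitution $v_i = -\aa_i^t\dd$, since any rank-one decomposition of a PSD matrix (say via eigenvectors) makes $\dd \mapsto (\aa_i^t\dd)_i$ surjective. Both approaches then observe that the linear part can be made nonzero (you via $\vv=0$ or $\vv=\ee_i$, the paper via $\dd=0$ or $\dd^tA \neq 0$), so $\textup{TH}_1(I)$ sits inside a proper halfspace.

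For the $(\Rightarrow)$ direction, your proof is more careful than the paper's and genuinely closes a gap that the paper glosses over. The paper picks a linear $f$ that is $1$-sos mod $I$, writes $f\equiv g$ mod $I$ for a sos-of-linear $g \neq 0$, and asserts that $g-f\in I$ is a convex quadric. But this assertion requires the quadratic part of $g$ to be nonzero, which need not hold for the first sos representation one encounters: $g$ could be a positive constant. You isolate exactly this degenerate case ($A=0$ in your notation, equivalently $g=c\geq 0$ constant), and handle it by observing that $f-c\in I$ is then a linear element of $I$; if it is nonconstant, $(f-c)^2\in I$ is the required convex quadric, and if it is a nonzero constant, $I=\RR[\xx]$ and contains $\|\xx\|^2$. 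This two-subcase fallback is precisely what is needed to make the argument airtight, and it is missing from the paper's proof. Well done.
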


\begin{proof}
  If $\textup{TH}_1(I) \neq \RR^n$, there exists a degree one polynomial $f$
  that is strictly positive on $\textup{TH}_1(I)$, hence $1$-sos modulo $I$.
  Then $f(\xx) \equiv g(\xx)$ mod $I$ for some $1$-sos $g(\xx) \not = 0$ and 
  $g(\xx)-f(\xx) \in I$ is a convex quadric.
  
  Conversely, suppose $\xx^tA\xx + \bb^t \xx + c \in I$ with $A
  \succeq 0$. Then for any $\dd \in \RR^n$,
  $$(\xx+\dd)^tA(\xx+\dd) = \xx^tA\xx + 2\dd^tA\xx +\dd^tA\dd \equiv
  (2\dd^tA-\bb^t) \xx + \dd^tA\dd - c \mod I.$$
  Therefore, since
  $(\xx+\dd)^tA(\xx+\dd)$ is a sum of squares of linear polynomials,
  the linear polynomial $(2\dd^tA-\bb^t) \xx + \dd^tA\dd - c $ is
  $1$-sos mod $I$ and $\textup{TH}_1(I)$ must satisfy it. Since $\dd$
  can be chosen so that $(2\dd^tA-\bb^t) \neq 0$, $\textup{TH}_1(I)$
  is not trivial.
\end{proof}




\begin{lemma}\label{lem:theta_intersection}
  For an ideal $I \subseteq \RR[\xx]$, $\textup{TH}_1(I)=\bigcap
  \textup{TH}_1(\langle F \rangle)$, where $F$ varies over all convex
  quadrics in $I$.
\end{lemma}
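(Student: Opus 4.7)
The plan is to prove the two inclusions separately. The forward inclusion $\textup{TH}_1(I) \subseteq \bigcap_F \textup{TH}_1(\langle F \rangle)$ is immediate: for every convex quadric $F \in I$ we have $\langle F \rangle \subseteq I$, so any linear polynomial that is $1$-sos mod $\langle F \rangle$ is automatically $1$-sos mod $I$, giving $\textup{TH}_1(I) \subseteq \textup{TH}_1(\langle F \rangle)$ for each such $F$, and hence after intersecting.

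For the reverse inclusion, I would pick $\pp \in \bigcap_F \textup{TH}_1(\langle F \rangle)$ and any linear polynomial $f$ that is $1$-sos mod $I$, and show $f(\pp) \geq 0$. Writing $f \equiv \sum_j h_j^2 \pmod{I}$ with each $h_j$ linear, the polynomial $G := \sum_j h_j^2 - f$ lies in $I$ and has degree at most two, with quadratic part equal to that of $\sum_j h_j^2$, which is a sum of squares of linear forms and hence PSD. In the generic case this quadratic part is nonzero, so $G$ is itself a convex quadric in $I$; since $f \equiv \sum_j h_j^2 \pmod{\langle G \rangle}$ shows that $f$ is $1$-sos mod $\langle G \rangle$, the hypothesis $\pp \in \textup{TH}_1(\langle G \rangle)$ yields $f(\pp) \geq 0$.

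The main obstacle is the degenerate case where the quadratic part of $G$ vanishes. In that case every $h_j$ must be a constant $b_j$, so $\sum_j h_j^2 = c$ with $c := \sum_j b_j^2 \geq 0$ and $G = c - f \in I$ is a linear polynomial. If $G = 0$, then $f \equiv c \geq 0$ on all of $\RR^n$ and we are done. Otherwise I would bootstrap $G$ to a convex quadric by squaring: $G^2 \in I$ is a (rank one) convex quadric, and from the identity
\[
(t - G)^2 \;\equiv\; t^2 - 2tG \;=\; t^2 - 2tc + 2tf \pmod{\langle G^2 \rangle},
\]
we see that for every $t \in \RR$ the linear polynomial $2tf + (t^2 - 2tc)$ is $1$-sos mod $\langle G^2 \rangle$. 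Applying $\pp \in \textup{TH}_1(\langle G^2 \rangle)$ with any $t > 0$ gives $f(\pp) \geq c - t/2$, and letting $t \to 0^+$ produces $f(\pp) \geq c \geq 0$, completing the argument.
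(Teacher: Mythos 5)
Your proof follows the same overall strategy as the paper's: the easy inclusion is identical, and for the reverse inclusion both arguments produce, for each linear $f$ that is $1$-sos mod $I$, the candidate quadric $F := \sum_j h_j^2 - f \in I$ modulo which $f$ is manifestly $1$-sos. Where you go further is in spotting the degenerate case in which every $h_j$ is constant: then $\sum_j h_j^2 - f$ has vanishing quadratic part and is \emph{not} a convex quadric under the paper's definition, which insists on $A \neq 0$. The paper's proof leans on the preceding remark that ``every sum of squares of linear polynomials is a convex quadric,'' which is not literally true when all the summands are constants, so it quietly omits exactly the case you isolate; your bootstrap via $G^2$ together with the limit $t \to 0^+$ is a correct repair of that gap. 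The one subcase you leave implicit is when $G = c - f$ is a nonzero \emph{constant} (equivalently, $f$ itself is a constant different from $c$): then $G^2$ is again a constant, not a convex quadric, and your squaring trick stalls. But in that event $I$ contains a nonzero constant, hence $I = \RR[\xx]$, so $I$ contains (say) $1 + x_1^2$, a convex quadric with $\textup{TH}_1(\langle 1 + x_1^2 \rangle) = \emptyset$ because $-1 \equiv x_1^2 \bmod \langle 1 + x_1^2 \rangle$; the right-hand intersection is then empty and there is no $\pp$ to test, so this final subcase is vacuous. It is worth a sentence, but it does not affect the correctness of your argument.
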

\begin{proof}
  If $F \in I$ then $\langle F \rangle \subseteq I$. Also, if $f$
  is linear and $1$-sos mod $\langle F \rangle$ then it is also
  $1$-sos mod $I$. Therefore, $\textup{TH}_1(I) \subseteq
  \textup{TH}_1(\langle F \rangle)$.
  
  To prove the reverse inclusion, we need to show that if $f$ is a linear
  polynomial that is nonnegative on $\textup{TH}_1(I)$, it is also nonnegative 
  on $\bigcap_{F \in I} \textup{TH}_1(\langle F \rangle)$, where $F$
  is a convex quadric. It suffices to show that whenever $f$ is linear
  and $1$-sos mod $I$, then there is a convex quadric $F \in I$ such
  that $f(\xx) \geq 0$ is valid for $\textup{TH}_1(\langle F
  \rangle)$, or equivalently that $f$ is $1$-sos mod $\langle F
  \rangle$. Since $f$ is $1$-sos mod $I$, there is a sum of squares of
  linear polynomials $g(\xx)$ such that $f(\xx) \equiv g(\xx)$ mod
  $I$. But $g$ is a convex quadric, hence so is $g(\xx)-f(\xx)$.  Thus
  $f$ is $1$-sos mod the ideal $\langle g(\xx) - f(\xx) \rangle$ and
  we can take $F(\xx) = g(\xx)-f(\xx)$.
\end{proof}

\begin{lemma} \label{lemma:theta_convex}
  If $F(\xx)=\xx^tA\xx + \bb^t\xx + c$ with $A \succeq 0$, then
  $\textup{TH}_1(\langle F \rangle)=\textup{conv}(\V_{\RR}(F))$.
\end{lemma}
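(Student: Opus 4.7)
The plan is to sandwich $\textup{TH}_1(\langle F \rangle)$ between $\textup{conv}(\V_\RR(F))$ and the sublevel set $\{\xx : F(\xx) \leq 0\}$, and then show that these two outer sets coincide. The inclusion $\textup{conv}(\V_\RR(F)) \subseteq \textup{TH}_1(\langle F \rangle)$ is a general property of theta bodies: every real zero of the ideal lies in $\textup{TH}_1$ (any $1$-sos representative evaluates to a genuine sum of squares there), and $\textup{TH}_1$ is convex.

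For the upper bound, I would reuse the identity from the proof of Lemma~\ref{lem:contained}. For each $\dd \in \RR^n$, the linear form $L_\dd(\xx) := (2\dd^t A - \bb^t)\xx + \dd^t A \dd - c$ is congruent to $(\xx+\dd)^t A (\xx+\dd) \pmod{\langle F \rangle}$, hence $1$-sos mod $\langle F \rangle$ since $A \succeq 0$. Thus every $\ss \in \textup{TH}_1(\langle F \rangle)$ must satisfy $L_\dd(\ss) \geq 0$ for every $\dd$. Completing the square in $\dd$ yields $L_\dd(\ss) = (\dd+\ss)^t A (\dd+\ss) - F(\ss)$, which is minimized at $\dd = -\ss$ with value $-F(\ss)$. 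So the family of constraints $\{L_\dd \geq 0\}_{\dd}$ forces $F(\ss) \leq 0$, giving $\textup{TH}_1(\langle F \rangle) \subseteq \{\xx : F(\xx) \leq 0\}$.

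Finally, I claim $\{\xx : F(\xx) \leq 0\} \subseteq \textup{conv}(\V_\RR(F))$. Given $\ss$ with $F(\ss) < 0$ (the case $F(\ss)=0$ being trivial), pick any direction $\dd$ with $\dd^t A \dd > 0$, which exists because $A$ is a nonzero positive semidefinite matrix. The restriction $t \mapsto F(\ss + t\dd)$ is a univariate quadratic with positive leading coefficient that is negative at $t=0$, so it has two real roots $t_1 < 0 < t_2$; the points $\ss + t_i \dd$ lie in $\V_\RR(F)$, and $\ss$ is the obvious convex combination of them with weights $t_2/(t_2-t_1)$ and $-t_1/(t_2-t_1)$. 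The degenerate case $\V_\RR(F)=\emptyset$ needs no separate treatment: then $F>0$ everywhere, and the middle inclusion already forces $\textup{TH}_1(\langle F \rangle) = \emptyset = \textup{conv}(\V_\RR(F))$. Stringing the three inclusions together closes the circle and yields the lemma. I do not anticipate any real obstacle here — the whole argument is a direct computation with convex quadrics, exploiting precisely the sos identity that drove Lemma~\ref{lem:contained}.
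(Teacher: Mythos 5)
Your proposal is correct and notably cleaner than the paper's argument; the two proofs share the same opening move but diverge after that. Both start from the observation (already used in Lemma~\ref{lem:contained}) that for every $\dd \in \RR^n$ the linear form $L_\dd(\xx) = (2\dd^t A - \bb^t)\xx + \dd^t A \dd - c$ is congruent to $(\xx+\dd)^t A(\xx+\dd)$ modulo $\langle F\rangle$ and hence $1$-sos. The paper then interprets the $L_\dd$'s as the tangent hyperplanes to $\V_\RR(F)$ at nonsingular points and argues geometrically that these cut out the sublevel set, but this forces a separate case when $\textup{grad}\,F$ vanishes somewhere on $\V_\RR(F)$, where the paper falls back on a more delicate $\epsilon$-perturbation argument writing $F = \sum h_i^2$ and showing $\pm h_l + \epsilon/2$ are $1$-sos for every $\epsilon>0$. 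Your route sidesteps this entirely: since $L_\dd(\ss) = (\dd+\ss)^t A(\dd+\ss) - F(\ss) \geq 0$ must hold for \emph{every} $\dd$ at each $\ss \in \textup{TH}_1(\langle F\rangle)$, substituting $\dd=-\ss$ immediately forces $F(\ss)\leq 0$, with no distinction between singular and nonsingular points. You also prove the inclusion $\{F\leq 0\}\subseteq \conv(\V_\RR(F))$ explicitly via the line-restriction argument, whereas the paper asserts $\conv(\V_\RR(F)) = \{F\leq 0\}$ without proof ``since $F$ is convex''; your version is therefore more self-contained. Your handling of the degenerate case $\V_\RR(F)=\emptyset$ is also correct. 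Altogether this is a tighter proof of the same lemma.
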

\begin{proof}
  We know that $\textup{conv}(\V_{\RR}(F)) \subseteq
  \textup{TH}_1(\langle F \rangle)$ and, since $F$ is convex,
  $\textup{conv}(\V_{\RR}(F))=\{\xx \in \RR^n \,:\, F(\xx)\leq 0\}$.
  Thus, if for every $\xx \in \V_{\RR}(F)$ $\textup{grad}F(\xx) \neq
  {\bf 0}$, then $\textup{conv}(\V_{\RR}(F))$ is supported by the
  tangent hyperplanes to $\V_{\RR}(F)$. In this case, to show that
  $\textup{TH}_1(\langle F \rangle) \subseteq
  \textup{conv}(\V_{\RR}(F))$, it suffices to prove that the defining
  (linear) polynomials of all tangent hyperplanes to $\V_{\RR}(F)$ are
  $1$-sos mod $\langle F \rangle$.  The proof of the ``if'' direction
  of Lemma~\ref{lem:contained} shows that it would suffice to prove
  that a tangent hyperplane to $\V_{\RR}(F)$ has the form
  $(2\dd^tA-\bb^t) \xx + \dd^tA\dd - c = 0$, for some $\dd \in
  \RR^n$. The tangent at $\xx_0 \in \V_{\RR}(F)$ has equation
  $0=(2A\xx_0+\bb)^t(\xx-\xx_0)$ which can be rewritten as
  $$0= (2\xx_0^tA+\bb^t)\xx - 2\xx_0^tA\xx_0 -\bb^t\xx_0 =
  (2\xx_0^tA+\bb^t)\xx -\xx_0^tA\xx_0 + c,$$
  and so setting
  $\dd=-\xx_0$ gives the result. 
  
  Suppose there is an $\xx_0$ such that $F(\xx_0) = 0$ and
  $\textup{grad} F(\xx_0) = {\bf 0}$. By translation we may assume
  that $\xx_0 = 0$, hence, $c = 0$ and $\bb = {\bf 0}$. Therefore $F =
  \xx^t A \xx = \sum h_i^2$ where the $h_i$ are linear.  Since
  $\V_{\RR}(\langle F\rangle)=\V_{\RR}(\langle h_1,\ldots,h_m\rangle)$ it
  is enough to prove that all inequalities $\pm h_i \geq 0$ are valid
  for $\textup{TH}_1(\langle F \rangle)$.  For any
  $\epsilon >0$ we have
  $$(\pm h_l + \epsilon)^2 + \sum_{i \not = l} h_i^2 = F \pm 2
  \epsilon h_l + \epsilon^2 \equiv 2 \epsilon(\pm h_l + \epsilon/2)
  \,\,\textup{mod} \,\, \langle F \rangle,$$
  so $\pm h_l + \epsilon/2$
  is $1$-sos mod $\langle F \rangle$ for all $l$ and all $\epsilon
  >0$. This implies that all the inequalities $\pm h_l + \epsilon/2
  \geq 0$ are valid for $\textup{TH}_1(\langle F \rangle)$, therefore
  so are the inequalities $\pm h_l \geq 0$.
\end{proof}

\begin{theorem} \label{thm:intersection} Let $I \subseteq \RR[\xx]$ be
  any ideal, then 
  $$\textup{TH}_1(I)=\bigcap_{{F \in I}\atop{F \textrm{ convex
        quadric}}} \textup{conv}(\V_{\RR}(F)) = \bigcap_{{F \in
      I}\atop{F \textrm{ convex quadric}}} \{ \xx \in \RR^n : F(\xx)
  \leq 0\}.$$
\end{theorem}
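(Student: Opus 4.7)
The plan is to show that the theorem is essentially a direct synthesis of the three preceding lemmas, so almost no new work is needed. First I would invoke Lemma~\ref{lem:theta_intersection} to write
$$\textup{TH}_1(I) \;=\; \bigcap_{F} \textup{TH}_1(\langle F \rangle),$$
where $F$ ranges over the convex quadrics in $I$ (with the convention that the intersection over an empty family is $\RR^n$, which is consistent with Lemma~\ref{lem:contained}: if there are no convex quadrics in $I$ then $\textup{TH}_1(I)=\RR^n$). Next I would apply Lemma~\ref{lemma:theta_convex} to each term of this intersection, replacing $\textup{TH}_1(\langle F \rangle)$ by $\textup{conv}(\V_{\RR}(F))$. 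This yields the first equality of the theorem directly.

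For the second equality I would observe that when $F(\xx)=\xx^tA\xx+\bb^t\xx+c$ with $A\succeq 0$, $F$ is a convex function, so the sublevel set $\{\xx:F(\xx)\le 0\}$ is closed and convex. It contains $\V_{\RR}(F)=\{F=0\}$, hence $\textup{conv}(\V_{\RR}(F))\subseteq\{F\le 0\}$. For the reverse inclusion I would reuse the identification already appearing inside the proof of Lemma~\ref{lemma:theta_convex}: either $F$ has a smooth real zero and $\{F\le 0\}$ is cut out by the tangent hyperplanes to $\V_{\RR}(F)$ (so it equals $\textup{conv}(\V_{\RR}(F))$ by supporting hyperplane considerations), or every real zero of $F$ is a critical point, in which case a translation reduces $F$ to a sum of squares of linear forms and $\{F\le 0\}=\V_{\RR}(F)$ already.

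The argument is entirely a repackaging, so there is no real obstacle; the only point requiring attention is bookkeeping for the degenerate situations (empty family of convex quadrics, $\V_{\RR}(F)$ empty, or $F$ with a critical zero), each of which is covered by the three preceding lemmas. The main conceptual work has been done in establishing Lemma~\ref{lemma:theta_convex}, which is why the theorem itself admits such a short derivation.
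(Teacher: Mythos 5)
Your proof is correct and follows essentially the same route as the paper, which simply remarks that the theorem is immediate from Lemma~\ref{lem:theta_intersection} and Lemma~\ref{lemma:theta_convex}. The one point worth noting is that the second equality, $\textup{conv}(\V_{\RR}(F)) = \{\xx : F(\xx) \leq 0\}$, is not the statement of either lemma but an assertion made (without detailed justification) inside the proof of Lemma~\ref{lemma:theta_convex}; you correctly flag this and sketch a case split for it, which is somewhat more careful than the paper's terse treatment, though your ``supporting hyperplane'' step strictly speaking only yields $\cl(\textup{conv}(\V_{\RR}(F))) = \{F \leq 0\}$, and closedness of $\textup{conv}(\V_{\RR}(F))$ would need a brief normal-form or recession-cone argument to finish cleanly.
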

\begin{proof}
  Immediate from Lemma \ref{lem:theta_intersection} and Lemma
  \ref{lemma:theta_convex}.
\end{proof}

\begin{example} \label{ex:infinite perfect set}
  Theorem~\ref{thm:intersection} shows that some non-principal ideals
  such as $I=\langle x^2-z,y^2-z \rangle \subseteq \RR[x,y,z]$ are
  $\textup{TH}_1$-exact. Since $\V_{\RR}(I)=\{(\pm t,\pm t,t^2): t
  \in \RR\}$, fixing the third coordinate we get the four points
  $(x,y,t^2)$ where $|x|=|y|=|t|$ which implies that 
  $$\textup{conv}(\V_{\RR}(I))\supseteq \{(x,y,t^2): |x| \leq
  t, |y| \leq t, t \geq 0\}.$$ It is easy to see that the right hand
  side is equal to $\{(x,y,z): x^2 \leq z, y^2 \leq z\}$ which is
  exactly $\conv(\V_{\RR}(x^2-z)) \bigcap \conv(\V_{\RR}(y^2-z))$ and
  so contains $\textup{TH}_1(I)$ which contains
  $\textup{conv}(\V_{\RR}(I))$. So all inclusions must be equalities
  and $I$ is $\textup{TH}_1$-exact. This kind of reasoning allows us
  to construct non-trivial examples of $\textup{TH}_1$-exact ideals
  with high-dimensional varieties.
\end{example}


\begin{example} \label{ex:four points}
  Consider the set $S=\{(0,0),(1,0),(0,1),(2,2)\}$. Then the family of
  all quadratic curves in $\I(S)$ is
$$a(x^2-x) + b(y^2-y) - (\frac{a+b}{2}) xy = (x,y) \left[
\begin{array}{cc}
a & -(\frac{a+b}{4}) \\
 -(\frac{a+b}{4})  & b
\end{array}
\right] \left( \begin{array}{c} x \\ y \end{array} \right) -ax -by.$$
Since the case where both $a$ and $b$ are zero is trivial, we may
normalize by setting $a+b=1$ and get the matrix in the quadratic to be
$$\left[
\begin{array}{cc}
\lambda & -1/4 \\
 -1/4  & 1-\lambda
\end{array}
\right] $$
with $\lambda \geq 0$. This matrix is positive semidefinite
if and only if $\lambda(1-\lambda) - 1/16 \geq 0$, or equivalently, if
and only if $\lambda \in [1/2 - \sqrt{3}/4, 1/2 + \sqrt{3}/4]$.

This means that $(x,y) \in \textup{TH}_1(\I(S))$ if and only if, for
all such $\lambda$,
$$\lambda(x^2-x) + (1-\lambda)(y^2-y) -\frac{1}{2} xy \leq 0.$$
Since the
right-hand-side does not depend on $\lambda$, and the left-hand-side
is a convex combination of $x^2-x$ and $y^2-y$, the inequality holds
for every $\lambda \in [1/2 - \sqrt{3}/4, 1/2 + \sqrt{3}/4]$ if and
only if it holds at the end points of the interval. Equivalently, if
and only if
$$\left(\frac{1}{2} - \frac{\sqrt{3}}{4}\right)(x^2-x) +
\left(\frac{1}{2} + \frac{\sqrt{3}}{4}\right)(y^2-y) -\frac{1}{2}
xy \leq 0,$$
and
$$\left(\frac{1}{2} + \frac{\sqrt{3}}{4}\right)(x^2-x) +
\left(\frac{1}{2} - \frac{\sqrt{3}}{4}\right)(y^2-y) -\frac{1}{2}
xy \leq 0.$$
But this is just the intersection of the convex hull of the two
curves obtained by turning the inequalities into equalities.
Figure~\ref{fig:thetabody} shows this intersection.

\begin{figure} 
\includegraphics[scale=0.35]{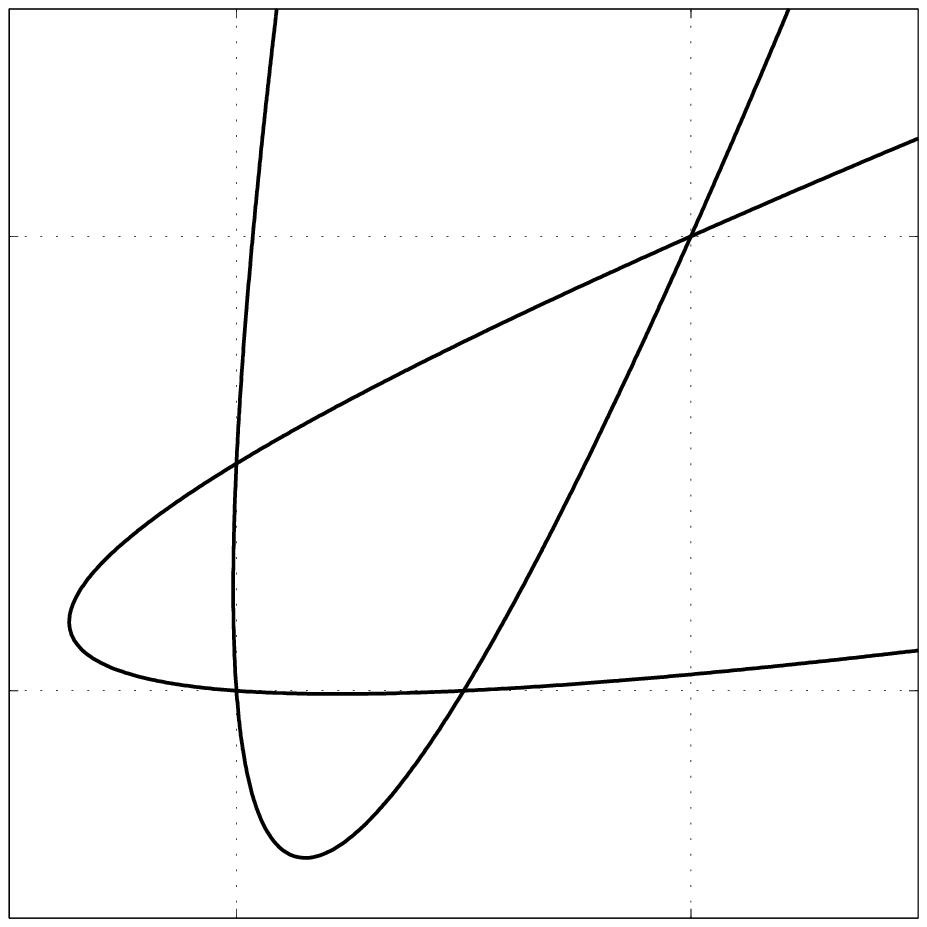}
\caption{Example~\ref{ex:four points}}
\label{fig:thetabody}
\end{figure}
\end{example}


\bibliographystyle{plain}

\end{document}